\let\c@author\relax
\DeclareMathOperator{\LS}{2-RP}
\DeclareMathOperator{\LC}{3-RP}
\DeclareMathOperator{\OA}{OA}
\newtheorem{theorem}{Theorem}[section]
\newtheorem{lemma}[theorem]{Lemma}
\newtheorem{corollary}[theorem]{Corollary}
\theoremstyle{definition}
\newtheorem{definition}[theorem]{Definition}
\newtheorem{example}[theorem]{Example}
\newtheorem{construction}[theorem]{Construction}
\newtheorem{append}[theorem]{}
\numberwithin{equation}{section}
\def\ps@pprintTitle{%
  \let\@oddhead\@empty
  \let\@evenhead\@empty
  \def\@oddfoot{\reset@font\hfil\thepage\hfil}
  \let\@evenfoot\@oddfoot
}
\begin{document}

\title{Latin cubes with disjoint subcubes of two orders}

\author[]{Tara Kemp\corref{cor1}}
\ead{t.kemp@uq.net.au}
\author[]{James Lefevre}
\ead{j.lefevre@uq.edu.au}

\address{School of Mathematics and Physics, ARC Centre of Excellence, Plant Success in Nature and Agriculture, The University of Queensland, Brisbane, Queensland, 4072, Australia}

\cortext[cor1]{Corresponding author}

\begin{abstract}
Given a partition $h_1+h_2+\dots+h_k = n$, a latin square of order $n$ with pairwise disjoint subsquares of orders $h_1\dots h_k$ is called a realization. When the values $h_i$ are of at most two sizes, the existence of a realization has been completely determined. However, the existence of a latin cube with pairwise disjoint subcubes of two orders is only partially solved. In this paper, we determine existence for such latin cubes in almost all cases.
\end{abstract}

\begin{keyword}
    latin square \sep permutation cube \sep realization \sep subcube
\end{keyword}

\maketitle

\section{Preliminaries}

The study of latin squares with sets of pairwise disjoint subsquares was originally motivated by a question posed by L.~Fuchs \cite{keedwell2015latin} about quasigroups with disjoint subquasigroups. Although much work has been done, the problem of existence of such latin squares remains unsolved. The original problem requires that the orders of the subsquares partition the order of the latin square, and in that case existence has been completely determined when the subsquares are of at most two different orders or there are at most five subsquares. An analogous problem has been defined for latin cubes and latin hypercubes in \cite{donovan2025latin}, and the same first case, where the subcubes are of at most two orders, is only partially solved.

A \emph{latin square} of order $n$ is an $n\times n$ array, $L$, with symbols from $[n] = \{1,2,\dots,n\}$ such that each symbol occurs exactly once in each row and column. The notation $a+[n]$ represents the set $\{a+i\mid i\in[n]\}$. A \emph{subsquare} of order $m$ is an $m\times m$ subarray of $L$ that is itself a latin square of order $m$. It is well-known that for any subsquare, if $m\neq n$ then $m\leq \frac{n}{2}$ \cite{evans1960embedding}.

An \emph{orthogonal array} with strength $t$, $k$ constraints, $n$ levels and index $\lambda$, denoted $\OA_\lambda(t,k,n)$, is a $k\times (\lambda n^t)$ array $A$ with $n$ symbols such that each possible $t$-tuple of symbols occurs exactly $\lambda$ times as a column in any $t$-rowed subarray of $A$. If $\lambda=1$, then it is often omitted. A latin square of order $n$ is equivalent to an $\OA(2,3,n)$, and so orthogonal arrays can be used to generalise latin squares into higher dimensions.

There are competing definitions for latin cubes, equivalent to either an $\OA_n(2,4,n)$ or $\OA(3,4,n)$. The latter are sometimes called \emph{permutation cubes} and that is the type we consider here. Thus, a \emph{latin cube} of order $n$ is an $n\times n\times n$ array $C$ with $n$ symbols such that each symbol occurs exactly once in each line of $C$, where a line is a set of $n$ cells found by fixing two of the three coordinates. Lines are denoted by $(i,j,[n])$, $(i,[n],j)$ or $([n],i,j)$ and referred to respectively as files, rows and columns. A layer is a set of cells with one fixed coordinate and represented by $(i,[n],[n])$, $([n],i,[n])$ or $([n],[n],i)$. The notation $(A,B,C)$ represents the set of cells $\{(i,j,k)\mid i\in A, j\in B, k\in C\}$, and a $\cdot$ in place of any of the sets represents $[n]$.

A \emph{subcube} of order $m$ is defined similarly to a subsquare; an $m\times m\times m$ subarray of $C$ that is itself a latin cube of order $m$. Two subcubes are disjoint if they share no layer or symbol.

\begin{example}
\label{ex: cube}
The arrays in \Cref{fig: 221 cube} are the layers of an order 5 latin cube with three pairwise disjoint subcubes. The subcubes are highlighted.
    \begin{figure}[h]
    \centering
    $\arraycolsep=4pt\begin{array}{|ccccc|} \hline
    \cellcolor{lightgray}1 & \cellcolor{lightgray}2 & 3 & 5 & 4\\
    \cellcolor{lightgray}2 & \cellcolor{lightgray}1 & 5 & 4 & 3\\
    3 & 5 & 4 & 2 & 1\\
    4 & 3 & 2 & 1 & 5\\
    5 & 4 & 1 & 3 & 2\\ \hline \end{array}$
    \quad
    $\arraycolsep=4pt\begin{array}{|ccccc|} \hline
    \cellcolor{lightgray}2 & \cellcolor{lightgray}1 & 5 & 4 & 3\\ 
    \cellcolor{lightgray}1 & \cellcolor{lightgray}2 & 4 & 3 & 5\\ 
    4 & 3 & 1 & 5 & 2\\ 
    5 & 4 & 3 & 2 & 1\\ 
    3 & 5 & 2 & 1 & 4\\ \hline \end{array}$
    \quad
    $\arraycolsep=4pt\begin{array}{|ccccc|} \hline
    5 & 3 & 4 & 1 & 2\\ 
    3 & 4 & 2 & 5 & 1\\ 
    1 & 2 & \cellcolor{lightgray}3 & 4 & \cellcolor{lightgray}5\\ 
    2 & 5 & 1 & 3 & 4\\ 
    4 & 1 & \cellcolor{lightgray}5 & 2 & \cellcolor{lightgray}3\\ \hline \end{array}$
    \quad
    $\arraycolsep=4pt\begin{array}{|ccccc|} \hline
    4 & 5 & 2 & 3 & 1\\ 
    5 & 3 & 1 & 2 & 4\\ 
    2 & 4 & \cellcolor{lightgray}5 & 1 & \cellcolor{lightgray}3\\ 
    3 & 1 & 4 & 5 & 2\\ 
    1 & 2 & \cellcolor{lightgray}3 & 4 & \cellcolor{lightgray}5\\ \hline \end{array}$
    \quad
    $\arraycolsep=4pt\begin{array}{|ccccc|} \hline
    3 & 4 & 1 & 2 & 5\\ 
    4 & 5 & 3 & 1 & 2\\ 
    5 & 1 & 2 & 3 & 4\\ 
    1 & 2 & 5 & \cellcolor{lightgray}4 & 3\\ 
    2 & 3 & 4 & 5 & 1\\ \hline \end{array}$
    \caption{An order 5 latin cube with disjoint subcubes.}
    \label{fig: 221 cube}
    \end{figure}
    
\end{example}

We define partial latin squares and cubes here, as they are needed in later sections. A \emph{partial latin square} of order $n$ is an $n\times n$ array in which each cell contains at most one symbol from $[n]$ and each symbol occurs at most once in each row and column. An $r\times s$ array with symbols from $[n]$, where $n\geq r,s$, is a \emph{latin rectangle} if each symbol occurs at most once in each row and column. There are analogous definitions for latin cubes. A \emph{partial latin cube} is an $n\times n\times n$ array such that each cell contains at most one symbol from $[n]$ and each symbol occurs at most once in each line. A \emph{latin box} is an $r\times s\times t$ array with $n\geq r,s,t$ symbols such that each symbol occurs at most once in each line.

Similarly, a \emph{partial $\OA_\lambda(t,k,n)$} is a $k\times m$ array $A$, for some $m\leq \lambda n^t$, with $n$ symbols such that each possible $t$-tuple of symbols occurs at most $\lambda$ times as a column in any $t$-rowed subarray of $A$.

If the empty cells of a partial latin square (or cube) $L$ can be filled with symbols from $[n]$ to form a latin square (cube) $L'$, then we say that $L$ can be completed and $L'$ is a completion of $L$.

Throughout, we use $P = (h_1\dots h_k)$ or $(h_1^{\alpha_1}h_2^{\alpha_2}\dots h_m^{\alpha_m})$ to represent a partition of $n$ with $\sum_{i=1}^kh_i = n$ or $\sum_{i=1}^m\alpha_ih_i = n$.

Returning to latin squares and the problem introduced by L.~Fuchs, a \emph{realization} of a partition $P = (h_1\dots h_k)$ of $n$, denoted $\LS(h_1\dots h_k)$ or just $\text{RP}(h_1\dots h_k)$, is a latin square of order $n$ with pairwise disjoint subsquares of orders $h_1,h_2,\dots,h_k$. The subsquares must be row, column and symbol disjoint.

Relevant to this work are the results of D{\'e}nes and P{\'a}sztor \cite{denes1963some}, Heinrich \cite{heinrich2006latin,heinrich1982disjoint} and Kuhl and Schroeder \cite{kuhl2018latin}.

\begin{theorem}[\cite{denes1963some}]
\label{thm: a^k square}
    For $k\geq 1$ and $a\geq 1$, a $\LS(a^k)$ exists if and only if $k\neq 2$.
\end{theorem}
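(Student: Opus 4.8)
The plan is to treat the two directions separately, and within the ``if'' direction to split on $k=1$ and $k\ge 3$.

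For the ``only if'' direction I would show that a latin square of order $2a$ cannot contain two symbol-, row- and column-disjoint subsquares of order $a$. Suppose $A$ is such a subsquare, occupying rows $R$, columns $C$ and symbols $S$ with $|R|=|C|=|S|=a$. A second disjoint subsquare of order $a$ is then forced onto the complementary rows $\bar R=[2a]\setminus R$, columns $\bar C$ and symbols $\bar S$, since each of these complements has size exactly $a$. The key observation, which I would isolate as a small lemma, is that in a latin square of order $2a$ a subsquare on $R\times C$ with symbol set $S$ forces the region $R\times\bar C$ to be itself a subsquare on the complementary symbol set $\bar S$: within the rows $R$ every symbol of $S$ already occurs once per row inside the columns $C$, so the $a\times a$ region $R\times\bar C$ uses only symbols of $\bar S$, and a counting argument per row and per column shows each such symbol occurs exactly once in each line, so it is a subsquare. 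Applying this twice --- first to $A$ to see that $R\times\bar C$ carries symbols $\bar S$, then to that new subsquare to see that $\bar R\times\bar C$ carries symbols $S$ --- shows the complementary block $\bar R\times\bar C$ necessarily uses symbols $S$, not $\bar S$. Hence the only candidate for a second subsquare cannot be symbol-disjoint from $A$, and no $\LS(a^2)$ exists.

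For the ``if'' direction with $k=1$ there is nothing to do: a single latin square of order $a$ is itself a $\LS(a^1)$. For $k\ge 3$ I would use a direct-product (Kronecker) construction. Take any latin square $N$ of order $a$ and a latin square $M$ of order $k$ that admits a transversal, and form the latin square $L$ of order $ak$ whose rows and columns are indexed by pairs, with the symbol in row $(I,x)$, column $(J,y)$ equal to the pair $(M(I,J),N(x,y))$. For each cell $(I,J)$ of $M$ the block with rows $\{I\}\times[a]$ and columns $\{J\}\times[a]$ is a subsquare of order $a$ whose symbols are exactly $\{M(I,J)\}\times[a]$. Choosing the $k$ blocks indexed by a transversal $T$ of $M$ makes the row-index $I$, the column-index $J$, and the symbol first-coordinate $M(I,J)$ each range over distinct values, so the resulting $k$ subsquares are pairwise row-, column- and symbol-disjoint, giving a $\LS(a^k)$.

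What remains, and what I expect to be the main obstacle, is supplying a latin square $M$ of order $k$ with a transversal for every $k\ne 2$. For odd $k$ the cyclic table $M(I,J)=I+J \pmod k$ has its main diagonal as a transversal, disposing of half the cases at once. The delicate range is even $k\ge 4$, where the cyclic group table has no transversal; here I would either invoke the classical fact that a latin square of order $k$ with a transversal exists for all $k\ne 2$, or build one explicitly by prolonging the cyclic square of order $k-1$ (which is odd, hence transversal-bearing) up to order $k$ while retaining a transversal. Checking that the prolongation preserves a transversal is the one step needing genuine care; once it is in hand, the product construction and the impossibility argument above are routine.
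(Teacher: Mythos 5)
Your proposal cannot be compared against an in-paper argument, because the paper does not prove this statement at all: it is imported as a known result of D\'enes and P\'asztor, and the paper only ever uses it as a black box (via \Cref{thm: squaresatmost2} and \Cref{thm: CubefromSquare}). Judged on its own, your proof is correct. The nonexistence argument for $k=2$ is sound: a second order-$a$ subsquare disjoint from $A$ would be forced onto exactly $\bar R\times\bar C$ with symbols $\bar S$, while your complementation lemma, applied first row-wise to $A$ and then column-wise to the subsquare on $R\times\bar C$, shows $\bar R\times\bar C$ is in fact a subsquare on $S$ --- a contradiction. The product construction for $k\geq 3$ is also sound, and the one step you flag as delicate does go through: the cyclic square of odd order $k-1\geq 3$ has its broken diagonals as $k-1$ pairwise disjoint transversals, so you may prolong along one of them and then take a second, untouched broken diagonal together with the new corner cell $(k,k)$ (which carries the new symbol $k$) as a transversal of the prolonged square of order $k$; alternatively, citing the classical fact that latin squares with transversals exist for every order other than $2$ is legitimate. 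So the only incomplete point in your write-up is one you correctly identified and which can be closed in a few lines; nothing in the approach fails.
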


\begin{theorem}[\cite{heinrich2006latin},\cite{heinrich1982disjoint}, Theorem 5.1 of \cite{kuhl2018latin}]
\label{thm: squaresatmost2}
    For $a>b>0$ and $k\geq 4$, a $\LS(a^ub^{k-u})$ exists if and only if $u\geq 3$, or $u\in[2]$ and $a\leq (k-2)b$ except when $k=4$ and $u=2$.
\end{theorem}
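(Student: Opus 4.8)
The statement is biconditional, so I would treat necessity and sufficiency separately, then isolate the excepted case. Let $L$ be a realization, and let each subsquare occupy a disjoint row-set $R_i$, column-set $C_i$ and symbol-set $S_i$, with $|R_i|=|C_i|=|S_i|$ equal to $a$ or $b$. Suppose $u\in\{1,2\}$. Fix an order-$a$ subsquare $(R_1,C_1,S_1)$ and a second subsquare $(R_2,C_2,S_2)$, taken to be another order-$a$ subsquare when $u=2$ and an order-$b$ subsquare when $u=1$. I would examine the rectangle on rows $R_1$ and columns $C_2$: every entry avoids $S_1$, since the symbols of $S_1$ occur in rows $R_1$ only inside $C_1$, and likewise avoids $S_2$, since the symbols of $S_2$ occur in columns $C_2$ only inside $R_2$. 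Hence all entries lie in the remaining $n-a-|S_2|$ symbols. Reading off a line of this rectangle that contains $a$ cells (a column when $u=1$), these $a$ symbols are distinct, so $a\le n-a-|S_2|=(k-2)b$ in both cases, which is exactly the stated bound. For $u\ge 3$ the same rectangle between two order-$a$ subsquares gives only $a\le n-2a$, which holds automatically because $n\ge 3a$, so no condition survives and $u\ge 3$ is always permitted.

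\textbf{The excepted case $k=4,u=2$.} Here the realization is $\LS(a^2b^2)$, with order-$a$ subsquares $(R_1,C_1,S_1),(R_2,C_2,S_2)$ and order-$b$ subsquares $(R_3,C_3,S_3),(R_4,C_4,S_4)$. The two off-diagonal blocks on $(R_1,C_2)$ and $(R_2,C_1)$ are $a\times a$ and, by the argument above, can use only the $2b$ symbols of $S_3\cup S_4$. I would prove non-existence by chasing a single symbol $s\in S_3$: its forced occurrences inside these two blocks and inside $B_1$ pin down every row and column in which $s$ may appear, and I would show this leaves no admissible cell for $s$ in the rows $R_4$, contradicting that $s$ must occur in every row. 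At the boundary $a=2b$ this is immediate, since the two off-diagonal blocks are then full latin squares on $S_3\cup S_4$, forcing $s$ into every row of $R_1\cup R_2$ and every column of $C_1\cup C_2$, so that it can reach $R_4$ only through the columns $C_4$, which already belong to $B_2$. I expect extending this contradiction through the interior $a<2b$ to be the genuinely delicate part, as the obstruction is special to having exactly two order-$b$ subsquares and evaporates the moment a third is present.

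\textbf{Sufficiency.} For the constructions I would reduce to known realizations of a single order. When $u\ge 3$ I would start from a $\LS(a^u)$, which exists by \Cref{thm: a^k square} because $u\neq 2$, and enlarge it to order $ua+(k-u)b$ by splicing in the $k-u$ order-$b$ subsquares via a prolongation/hole-filling step; the slack from $n\ge 3a$ makes the off-diagonal blocks easy to complete while keeping all subsquares row-, column- and symbol-disjoint. The remaining regime is $u\in\{1,2\}$ with $a\le (k-2)b$ (and, for $k=4$, only $u=1$). There I would give a direct construction, filling each off-diagonal block by a prescribed system of symbol-sets drawn from the $b$-subsquares and verifying the line constraints; the tight case $a=(k-2)b$, where these blocks are saturated, I expect to require its own dedicated layout.

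\textbf{Main obstacle.} The necessity bound is the easy half. The real work, and where I would concentrate effort, is twofold: establishing non-existence across the full $k=4,u=2$ range, and producing explicit disjoint fillings exactly at the boundary $a=(k-2)b$ for $u\in\{1,2\}$, where there is no slack left to absorb conflicts.
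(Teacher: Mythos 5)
A preliminary remark: the paper does not prove \Cref{thm: squaresatmost2} at all; it quotes it from Heinrich and from Theorem 5.1 of Kuhl and Schroeder, so your proposal has to be judged against what a complete proof would require rather than against an argument in the text. Your necessity inequality is correct and is the standard argument: the rectangle on rows $R_1$ and columns $C_2$ avoids $S_1\cup S_2$, and each of its columns carries $a$ distinct symbols, so $a\le n-a-|S_2|=(k-2)b$ whenever $u\in\{1,2\}$. Everything beyond that, however, is either missing or would not work as described.

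The concrete failure is in the exceptional case $k=4$, $u=2$ with $b<a<2b$: there is no per-symbol contradiction of the kind you propose, so chasing a single symbol until it has ``no admissible cell in the rows $R_4$'' cannot succeed. Indeed, fix $s\in S_3$ and let $p$ be its number of occurrences in the block $(R_1,C_2)$; the line constraints then force exactly $a-p$ occurrences in $(R_1,C_4)$, exactly $a-p$ in $(R_4,C_2)$, $b-a+p$ in $(R_4,C_1)$, $2a-b-p$ in $(R_2,C_1)$ and $b-a+p$ in $(R_2,C_4)$, and this pattern is feasible for \emph{every} $p$ with $a-b\le p\le a$; in particular $s$ always has admissible cells in rows $R_4$. (Even your $a=2b$ argument is not purely single-symbol: it secretly uses the class-level pigeonhole that an $a\times a$ block on only $2b=a$ symbols is a latin square on them.) What the chase does force is an exact aggregate count: for $s\in S_3$, its $2a+b$ occurrences outside $B_3$ comprise exactly $b$ in columns $C_4$ (one per column, necessarily in rows $R_1\cup R_2$), exactly $b$ in rows $R_4$ (one per row, necessarily in columns $C_1\cup C_2$), hence exactly $2a-b$ in $(R_1,C_2)\cup(R_2,C_1)$; symmetrically, each $s\in S_4$ also occurs exactly $2a-b$ times in these two blocks. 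Since those $2a^2$ cells contain no symbols outside $S_3\cup S_4$, summing over all $2b$ symbols gives $2a^2=2b(2a-b)$, i.e.\ $(a-b)^2=0$, contradicting $a>b$. That identity --- comparing the forced total $2b(2a-b)$ with the cell count $2a^2$ --- is the missing idea, and it settles the whole range at once. Finally, the sufficiency half is only a plan: ``prolongation/hole-filling'' does not come for free, and constructing a $\LS(a^ub^{k-u})$ for $u\ge3$, and for $u\le 2$ with $a\le(k-2)b$ (especially at the tight value $a=(k-2)b$), is precisely the bulk of the cited papers. Your starting square $\LS(a^u)$ does exist by \Cref{thm: a^k square}, but nothing in the proposal shows that it can be embedded together with $k-u$ further pairwise row-, column- and symbol-disjoint order-$b$ subsquares. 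As written, the proposal establishes only the necessity inequality.
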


Given a partition $P$ of $n$, a \emph{3-realization} of $P$, denoted $\LC(h_1\dots h_k)$, is a latin cube of order $n$ with pairwise disjoint subcubes of orders $h_1,\dots,h_k$. Unless otherwise stated, we assume that $h_1\geq h_2\geq\dots\geq h_k>0$. The latin cube in \Cref{ex: cube} is a $\LC(2^21^1)$.

A 3-realization is in \emph{normal form} if for each $m\in[k]$ and $i,j,\ell\in H_m$, the cell $(i,j,\ell)$ contains a symbol from $H_m$, where $H_m = \sum_{i=1}^{m-1}h_i +[h_m]$. A 3-realization in normal form has the subcubes along the main diagonal, with the orders of the subcubes in decreasing order and the symbols used in each subcube in increasing order (note that this is a stronger condition than the one given in \cite{donovan2025latin}). Any 3-realization can be put into normal form via an appropriate permutation of layers and symbols.

The results in \cite{donovan2025latin} focus on partitions with at most two distinct integers and correspond to \Cref{thm: a^k square,thm: squaresatmost2}. Throughout, we assume that $a>b$.

\begin{theorem}[Theorem 3.4 of \cite{donovan2025latin}]
\label{a^n}
    A $\LC(a^k)$ exists for all $a,k\in\mathbb{Z}^+$.
\end{theorem}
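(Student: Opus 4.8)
The plan is to build the cube as a direct product of two smaller latin cubes, exploiting the factorization $n = ak$. Identify the index set $[ak]$ with $\mathbb{Z}_k\times\mathbb{Z}_a$, so that the $m$-th block $H_m$ of the normal form corresponds to $\{m\}\times\mathbb{Z}_a$. Given any latin cube $B$ of order $k$ and any latin cube $D$ of order $a$, I would define
\[
C\big((i_1,i_2),(j_1,j_2),(l_1,l_2)\big) = \big(B(i_1,j_1,l_1),\, D(i_2,j_2,l_2)\big).
\]
The first step is to verify that $C$ is a latin cube of order $ak$: fixing any two of the three paired coordinates, the map sending the remaining pair to its symbol is the product of a bijection coming from the latin property of $B$ in the first component and one coming from $D$ in the second, hence a bijection. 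Thus each line of $C$ contains every symbol exactly once, so $C$ is a latin cube (equivalently an $\OA(3,4,ak)$).

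Next I would pin down when $C$ carries the desired disjoint subcubes. Restricting to $i_1=j_1=l_1=m$ and letting the second coordinates range over $\mathbb{Z}_a$ gives the subarray on $\{m\}\times\mathbb{Z}_a$ in each dimension, whose symbols are $\big(B(m,m,m),\,D(i_2,j_2,l_2)\big)$. For this to be an order-$a$ subcube living on block $m$ of the symbols it suffices that $B(m,m,m)=m$ for every $m$; then the symbol set is exactly $\{m\}\times\mathbb{Z}_a = H_m$ and, $D$ being a latin cube, the restriction is itself an order-$a$ latin cube. So if $B$ is \emph{idempotent}, the $k$ diagonal blocks of $C$ are order-$a$ subcubes, and for $m\neq m'$ they occupy disjoint sets of rows, columns, files and symbols (those with first coordinate $m$ versus $m'$). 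This is precisely the no-shared-layer, no-shared-symbol condition of \cite{donovan2025latin}, so they are pairwise disjoint and $C$ is in normal form.

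The remaining task — producing an idempotent latin cube of order $k$ for every $k$ — is the crux, and it is where the cube problem departs from the square problem. I would take the explicit linear cube $B(i,j,l) = i + j - l \pmod k$ on $\mathbb{Z}_k$: each coefficient $1,1,-1$ is a unit modulo $k$, so fixing two coordinates leaves a bijection in the third and $B$ is latin, while $B(m,m,m)=m+m-m=m$ makes it idempotent for \emph{every} $k$, including $k=2$. Taking $D(i,j,l)=i+j-l\pmod a$ as well yields the single closed form $C\big((i_1,i_2),(j_1,j_2),(l_1,l_2)\big)=\big(i_1+j_1-l_1 \bmod k,\; i_2+j_2-l_2 \bmod a\big)$. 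I do not expect a genuine obstacle here: the contrast with \Cref{thm: a^k square} is instructive, since for latin squares an idempotent square of order $2$ does not exist (the source of the excluded case $k=2$), whereas the extra coordinate of the cube restores enough freedom (via the coefficient $-1$) that idempotency is available at all orders, so no exceptions survive. The degenerate cases $k=1$ (the whole cube is its single subcube) and $a=1$ (order-$1$ subcubes) are immediate, and the verifications above are routine once the direct product and the idempotent cube are fixed; hence $\LC(a^k)$ exists for all $a,k\in\mathbb{Z}^+$.
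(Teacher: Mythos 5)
Your proof is correct, but there is nothing in this paper to match it against: the statement is imported verbatim as Theorem~3.4 of \cite{donovan2025latin}, with no proof given here. Judged against the machinery the paper does set up, your argument is a genuinely different and self-contained route. Your direct product $C\big((i_1,i_2),(j_1,j_2),(l_1,l_2)\big)=\big(B(i_1,j_1,l_1),D(i_2,j_2,l_2)\big)$ is exactly the \emph{inflation} of $B$ by $D$ in the sense of Section~2, and an idempotent latin cube of order $k$ is exactly a $\LC(1^k)$; so your proof can be rephrased inside the paper's framework as: the cube $B(i,j,\ell)=i+j-\ell\pmod{k}$ is a $\LC(1^k)$ for every $k\geq 1$, and inflating it by an arbitrary latin cube of order $a$ (the unlabelled inflation lemma at the start of Section~2) yields a $\LC(a^k)$. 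This is worth contrasting with the route the paper's toolkit suggests, namely combining \Cref{thm: CubefromSquare} with \Cref{thm: a^k square}: that route fails precisely at $k=2$, since no $\LS(a^2)$ exists, and your observation that idempotency is available for cubes of \emph{every} order (the coefficient $-1$ on $\ell$ does the work, in contrast with the nonexistence of an idempotent latin square of order $2$) is exactly what closes that gap. The verifications you give are all sound: a product of two permutation cubes is a permutation cube because fixing two coordinate pairs leaves a product of bijections; when $B$ is idempotent the diagonal blocks are copies of $D$ on the symbol sets $\{m\}\times\mathbb{Z}_a$; and distinct blocks share no layer and no symbol, which is the paper's definition of disjoint subcubes. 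The degenerate cases $a=1$ and $k=1$ are handled correctly as well.
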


\begin{theorem}[Theorem 4.4 of \cite{donovan2025latin}]
\label{ab^n-1}
    For $k\geq 3$, a $\LC(ab^{k-1})$ exists if and only if $a\leq (k-1)b$.
\end{theorem}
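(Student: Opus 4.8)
The plan is to prove the two directions separately. \emph{Necessity} I would get from a single observation: fixing one coordinate of a latin cube turns the remaining two families of lines into the rows and columns of an $n\times n$ array in which every symbol occurs once per row and once per column, so each layer of a latin cube is itself a latin square of order $n$. Putting the realization in normal form, the subcube of order $a$ occupies $([a],[a],[a])$ with symbols $[a]$; intersecting it with the layer $(\cdot,\cdot,1)$ for $1\in[a]$ yields an $a\times a$ subsquare on the symbols $[a]$ sitting inside a latin square of order $n=a+(k-1)b$. Since $k\geq 3$ makes $a<n$, the bound $a\leq n/2$ for a proper subsquare \cite{evans1960embedding} gives $2a\leq a+(k-1)b$, i.e. $a\leq(k-1)b$.

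For \emph{sufficiency} I would work in normal form and reduce the problem to filling the off-diagonal cells: once those are placed consistently, each diagonal block $(H_m,H_m,H_m)$ can be completed with a latin cube of the appropriate order, which exists by \Cref{a^n}. The clean backbone is the tight case $a=(k-1)b$, where $n=2a$. Writing $A=[a]$ and $D=a+[a]$ and identifying $A\leftrightarrow 0$, $D\leftrightarrow 1$ in $\mathbb{Z}_2$, I would split the cube and its symbols into the eight blocks $(X,Y,Z)\in\{A,D\}^3$ and give block $(X,Y,Z)$ the symbol set indexed by $X+Y+Z\bmod 2$, filling each with an arbitrary latin cube of order $a$ and choosing a $\LC(b^{k-1})$ on the symbols $D$ for the block $(D,D,D)$. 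Because $X+Y+Z$ is additive in each coordinate, the two blocks met by any line carry complementary symbol sets $A$ and $D$, so every line is a permutation of $[n]$; the block $(A,A,A)$ is the order-$a$ subcube, and the order-$b$ subcubes live disjointly inside $(D,D,D)$.

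The genuine difficulty is the \emph{strict} case $a<(k-1)b$, where the two parts have different sizes $a$ and $d:=(k-1)b$. Here the coarse $\mathbb{Z}_2$ assignment is hopelessly over-constrained: a block with two coordinates in $D$ has lines of length $d>a$, which cannot be filled from the $a$-element set $A$, and chasing the line-completion conditions around the eight blocks quickly forces a contradiction. The fix is to descend to the full $k$-part partition and distribute symbols at that finer granularity, filling the off-diagonal region as an incomplete latin cube with holes on the diagonal blocks. Concretely, blocks whose length-$d$ lines all run in a single $D$-direction can be realized by the explicit difference box $M_\ell(i,j)=(i+j+\ell)\bmod d$, which is latin precisely because $a\leq d$, while the remaining blocks must carry the $A$-symbols spread across the $k-1$ sub-blocks $H_2,\dots,H_k$ without ever colliding with the diagonal subcubes. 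The main obstacle is exactly this simultaneous bookkeeping --- exhibiting one distribution of symbols that is latin along rows, columns and files in every off-diagonal block at once --- which I would set up as a Hall/flow-type assignment on the $k$ blocks, the hypothesis $a\leq(k-1)b$ being precisely the counting condition that guarantees the assignment exists.
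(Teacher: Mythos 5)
This statement is Theorem~4.4 of Donovan et al.\ \cite{donovan2025latin}; the present paper only quotes it, so there is no in-paper proof to compare against and your attempt must stand on its own. Two of your three pieces do stand. The necessity argument is sound: a layer of a permutation cube is a latin square, in normal form the order-$a$ subcube meets the layer $(\cdot,\cdot,1)$ in an order-$a$ subsquare on the symbols $[a]$, and the subsquare bound of \Cref{thm: single subsquare} gives $2a\le a+(k-1)b$. Your $\mathbb{Z}_2$-parity block construction for the tight case $a=(k-1)b$ is also correct: every line of the order-$2a$ array meets exactly two blocks, of complementary parity, so it is a permutation of $[2a]$; the block $(A,A,A)$ is the order-$a$ subcube, and a $\LC(b^{k-1})$ (which exists by \Cref{a^n}) placed on the symbols $a+[a]$ in the block $(D,D,D)$ supplies the $k-1$ pairwise disjoint order-$b$ subcubes.

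The genuine gap is the strict case $a<(k-1)b$, which is where essentially all of the work lies, and what you offer there is a plan rather than a proof. The blocks are never assigned dimensions and symbol sets that are shown to be mutually consistent; the ``difference box'' $M_\ell(i,j)=(i+j+\ell)\bmod d$ is never checked against lines that cross between blocks; and the decisive claim --- that $a\le(k-1)b$ is ``precisely the counting condition'' making a Hall/flow-type assignment feasible --- is asserted without formulating the assignment problem, verifying Hall's condition, or, most importantly, proving that a feasible assignment of symbol \emph{counts} can be upgraded to an array that is latin along rows, columns and files simultaneously. That last step is exactly the hard part: degree/counting conditions are necessary but far from sufficient for latin-type completions, which is why completion arguments in this area lean on structured results such as \Cref{thm: Ryser} and \Cref{thm: de werra} together with carefully engineered hypotheses (compare \Cref{lemma: right corner channel} and \Cref{lemma: filling back entries}). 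As it stands, your sufficiency direction is proved only on the boundary $a=(k-1)b$, leaving the entire range $b\le a<(k-1)b$ open --- including $(k-2)b<a<(k-1)b$, which is exactly where cubes outperform squares and hence cannot be imported from \Cref{thm: squaresatmost2} via \Cref{thm: CubefromSquare}.
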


These results do not cover partitions of the form $(a^ub^{k-u})$ for $u\geq 2$, though some partial results are provided. The following is a strong result which provides a method for constructing many 3-realizations.

\begin{theorem}[Theorem 3.1 of \cite{donovan2025latin}]
\label{thm: CubefromSquare}
    If there exists a $\LS(h_1\dots h_k)$, then there exists a $\LC(h_1\dots h_k)$.
\end{theorem}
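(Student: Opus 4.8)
My plan is to prove this constructively, building the cube directly from the square. First I would put the given latin square realization $L$ into normal form, so that for each $m\in[k]$ the subsquare sits on $(H_m,H_m)$ with symbols drawn from $H_m$, where $H_m=\sum_{i=1}^{m-1}h_i+[h_m]$; equivalently, $L(i,j)\in H_m$ whenever $i,j\in H_m$. This costs nothing, since the subsquares are row-, column- and symbol-disjoint and collectively partition each of these into blocks of sizes $h_1,\dots,h_k$, so permuting rows, columns and symbols moves the blocks onto the diagonal. Viewing $L$ as a binary operation on $[n]$, I would then define the $n\times n\times n$ array $C$ by
\[
C(i,j,k)=L\bigl(L(i,j),k\bigr),
\]
and claim that $C$ is the desired $\LC(h_1\dots h_k)$.

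The first thing to check is that $C$ is a latin cube, which reduces to the three line conditions; in each case the entries along a line form a composition of bijections and hence a permutation of $[n]$. Along a file I fix $i,j$, so $c=L(i,j)$ is constant and $k\mapsto L(c,k)$ traverses row $c$ of $L$. Along a row I fix $i,k$, and as $j$ varies $L(i,j)$ ranges over all of $[n]$ (row $i$ of $L$) before being sent bijectively by $L(\cdot,k)$ (column $k$). The column condition is the same argument with the roles of $i$ and $j$ interchanged. Hence every line of $C$ contains each symbol exactly once.

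The crux is then showing that the subcubes land in the right places. For $i,j,k\in H_m$, normal form gives $L(i,j)\in H_m$, and since $k\in H_m$ as well we get $C(i,j,k)=L\bigl(L(i,j),k\bigr)\in H_m$, so $C$ sends the block $(H_m,H_m,H_m)$ into the symbol set $H_m$. Because $C$ is already a latin cube and $|H_m|=h_m$, each line meeting the block does so in exactly $h_m$ cells whose images lie injectively in the $h_m$-element set $H_m$, forcing the restriction of $C$ to this block to be a latin cube of order $h_m$ on the symbols $H_m$. These blocks occupy the pairwise-disjoint index sets $H_m$ and use the pairwise-disjoint symbol sets $H_m$, so the resulting subcubes share no layer and no symbol. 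The step I expect to demand the most care is precisely this subcube preservation: the composed form $L(L(i,j),k)$ is exactly what makes the diagonal blocks close under the operation, and normal form is the hypothesis that lets that closure go through — a naive construction such as repeating $L(i,j)$ along the $k$-axis would satisfy neither the file condition nor the subcube requirement, so the content of the argument is in choosing a composition that yields the line conditions and respects the block-diagonal structure simultaneously.
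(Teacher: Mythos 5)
Your proposal is correct and takes essentially the same approach as the paper: the construction there (\Cref{constr: cube from square construction}, quoted from the cited work) defines $C(r,c,\ell)=L(L(r,\ell),c)$, of which your $C(i,j,k)=L\bigl(L(i,j),k\bigr)$ is merely a conjugate, and the paper likewise combines this self-composition with a normal-form assumption so that the diagonal blocks close under the operation (exactly as in its proof of \Cref{lemma: cube with subcube}). Your line-by-line latinity check and the counting argument that forces each block to be a full subcube match the intended argument.
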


Using \Cref{thm: CubefromSquare} with \Cref{thm: squaresatmost2}, it follows directly that a $\LC(a^ub^{k-u})$ always exists when $u\geq 3$ for all $k\geq 4$. Thus, the only case to be settled is $u=2$. By adding in \Cref{a^2b^2}, existence is obvious when $u=2$ and $a\leq (k-2)b$.

\begin{lemma}[Lemma 4.6 of \cite{donovan2025latin}]
\label{a^2b^2}
    A $\LC(a^2b^2)$ exists for all $a\leq 2b$.
\end{lemma}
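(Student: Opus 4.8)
The plan is to realise the four subcubes directly as a completion problem and to exploit the extra freedom of the third coordinate, since \Cref{thm: CubefromSquare} is unavailable here: by \Cref{thm: squaresatmost2} (the excluded case $k=4$, $u=2$) no $\LS(a^2b^2)$ exists, so any construction must be genuinely three-dimensional. I would put the target cube in normal form, with the four diagonal blocks indexed by $A_1 = [a]$, $A_2 = a+[a]$, $B_1 = 2a+[b]$ and $B_2 = 2a+b+[b]$, where $n = 2a+2b$. On each diagonal block I place a latin cube on that block's own symbols (for instance a cyclic one); this immediately guarantees that the four blocks are latin subcubes of orders $a,a,b,b$ that are pairwise layer- and symbol-disjoint. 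What remains is to fill every off-diagonal cell so that the whole array becomes a latin cube of order $n$; equivalently, to complete the partial latin cube whose only filled cells are the four diagonal subcubes.

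First I would record why the individual lines are not the obstruction. A line meeting two distinct parts (e.g.\ a file $(i,j,[n])$ with $i\in A_1$ and $j\in A_2$) contains no prescribed cell and may be filled by any permutation, while a line lying inside a single part (e.g.\ $i,j\in A_1$) already contains each symbol of that part exactly once inside the corresponding subcube and must receive the remaining $n-a$ (resp.\ $n-b$) symbols in its empty cells. Hence no single line forces a constraint, and the real content is the simultaneous, globally consistent routing of symbols through the cross-regions, both between the two order-$a$ blocks and between the order-$a$ and order-$b$ blocks.

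To organise this I would build the completion one layer at a time, or equivalently adopt the inductive view: by \Cref{ab^n-1} with $k=3$ a $\LC(ab^2)$ of order $a+2b$ exists precisely when $a\le 2b$, and I would then adjoin a second order-$a$ subcube on $A_2$ to extend this to order $2a+2b$. In either formulation the engine is the same device that makes $\LC(a^2)$ exist even though $\LS(a^2)$ does not (\Cref{thm: a^k square} versus \Cref{a^n}): the summand coming from the third coordinate lets the two order-$a$ blocks carry complementary symbol behaviour that a two-dimensional table cannot support. The hypothesis $a\le 2b$ enters exactly here, as the counting (Hall-type) condition guaranteeing that the interaction between the two order-$a$ blocks can be absorbed by the reservoir of $2b$ symbols and $2b$ layers supplied by $B_1\cup B_2$; when $a>2b$ this reservoir is too small and the method breaks down, which is why the statement is confined to $a\le 2b$.

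The step I expect to be the main obstacle is precisely this cross-region fill: producing an explicit assignment on the off-diagonal cells and verifying that every file, row and column ends up containing each of the $n$ symbols exactly once while the four diagonal subcubes remain untouched. I would handle it by treating the $2b$ layers indexed by $B_1\cup B_2$ as a controlled buffer, defining the cross cells between $A_1$ and $A_2$ through a shift that cycles the two order-$a$ symbol sets against each other across these buffer layers, and then completing the comparatively slack $B$-cross-regions last using a latin box or latin rectangle completion. The verification that this respects all three line directions simultaneously, rather than the mere placement of the four subcubes, is where the weight of the argument lies.
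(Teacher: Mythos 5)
Your proposal is a plan, not a proof: every step that carries actual weight is deferred. The statement you are proving is imported by this paper from Donovan et al.\ (Lemma 4.6 of the cited work) precisely because its proof requires an explicit construction, and your sketch never produces one. Concretely, the two ``engines'' you point to do not close the gap. First, the inductive route --- take a $\LC(ab^2)$ from \Cref{ab^n-1} and ``adjoin a second order-$a$ subcube'' --- has no supporting result anywhere in the paper or in the tools you cite: extending a 3-realization of order $a+2b$ to one of order $2a+2b$ \emph{while prescribing that the added layers and symbols form a new subcube disjoint from the existing three} is exactly as hard as the lemma itself. The extension machinery of Section 2 (extensions by $c$, paired realizations) only grows one existing subcube by $c\leq t$ under strong hypotheses (orthogonal arrays, shared partial transversals with forced cells); it never adds a new subcube. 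Second, your ``shift that cycles the two order-$a$ symbol sets against each other across the buffer layers'' is never defined, and the claim that $a\leq 2b$ is ``exactly the Hall-type condition'' making it work is asserted, not argued.

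The deeper issue is that in this problem counting conditions are known to be insufficient, so no amount of Hall-type bookkeeping can substitute for an explicit fill-and-verify argument: a $\LS(a^2)$ fails to exist for every $a$ (\Cref{thm: a^k square}) even though no line or counting obstruction rules it out, and likewise $\LS(a^2b^2)$ fails for all $a>b$ (\Cref{thm: squaresatmost2}), which you yourself note. Your observation that ``no single line forces a constraint'' therefore establishes nothing --- the same is true in the two-dimensional cases that provably have no completion. A correct proof must specify the contents of the off-diagonal regions (for instance via an explicit algebraic or block construction, as in the source paper) and then check all three families of lines; your proposal identifies where that work lives but does not do any of it.
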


\begin{corollary}
\label{lemma: a<=(k-2)b}
    For $k\geq 4$, a $\LC(a^2b^{k-2})$ exists for all $a\leq (k-2)b$.
\end{corollary}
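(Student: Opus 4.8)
The plan is to prove Corollary~\ref{lemma: a<=(k-2)b} by reducing the general case $\LC(a^2b^{k-2})$ with $a \le (k-2)b$ down to the two settled base cases already available in the excerpt, namely the equal-order result \Cref{a^n} and the genuine $u=2$ building block \Cref{a^2b^2}. The key observation is that a partition $(a^2 b^{k-2})$ of $n = 2a + (k-2)b$ can, under the constraint $a \le (k-2)b$, be viewed as built from two large subcubes of order $a$ together with $k-2$ copies of a small subcube of order $b$. Since \Cref{a^2b^2} already delivers a $\LC(a^2 b^2)$ whenever $a \le 2b$, the work is to handle the extra $b$-subcubes when $k-2 > 2$, i.e.\ when we have more than two small blocks, and to cover the regime $2b < a \le (k-2)b$ where \Cref{a^2b^2} does not directly apply.

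First I would set up the normal-form template: place the two order-$a$ subcubes and the $k-2$ order-$b$ subcubes along the main diagonal in the standard decreasing arrangement described before \Cref{a^n}, so that block $H_1, H_2$ occupy symbols and coordinates $[a]$ and $a+[a]$, and the remaining blocks $H_3,\dots,H_k$ partition the top range into intervals of length $b$. The diagonal blocks themselves can be filled by any latin cube of the correct order, so the only constraint is the filling of the off-diagonal cells. My strategy is to treat the two $a$-blocks together as a single ``coarse'' super-block of order $2a$ and the $k-2$ $b$-blocks together as a coarse region of order $(k-2)b$; then a $\LC(a^2)$-style filling of the first region and an equal-block filling in the spirit of \Cref{a^n} for the second region gives the diagonal structure, and what remains is to interleave the two regions consistently.

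The cleaner route, which I would pursue first, is an inductive / amalgamation argument: group the $k-2$ small subcubes into pairs (and possibly one triple if $k-2$ is odd) so that each group of $b$-subcubes together with the pair of $a$-subcubes matches the hypothesis of a known realization. Concretely, when $k-2$ is even I would try to realize $(a^2 b^{k-2})$ by combining a $\LC(a^2 b^2)$ on the ``first'' pair of small blocks with $\LC(b^{\,2})$-type or $\LC(b^{\,k-4})$-type fillings (available from \Cref{a^n}) on the remaining small blocks, checking that the constraint $a \le (k-2)b$ guarantees the sub-hypotheses $a \le 2b$ or the appropriate counting inequalities hold at each amalgamation step. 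The inequality $a \le (k-2)b$ is exactly what is needed to distribute the ``excess'' of the large blocks across enough small blocks, mirroring the role played by $a \le (k-1)b$ in \Cref{ab^n-1}.

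The step I expect to be the main obstacle is the off-diagonal compatibility when $2b < a \le (k-2)b$, since here a single $\LC(a^2b^2)$ no longer suffices and one must genuinely spread the two order-$a$ subcubes against three or more order-$b$ subcubes simultaneously. The danger is that a naive pairing re-uses symbols or layers across blocks that are supposed to be disjoint, so I would need to verify, layer by layer, that the filling of the rectangular ``interaction'' regions between an $a$-block and the union of the $b$-blocks can be completed to a latin box (using the partial-latin-cube completion machinery defined in the preliminaries). If a direct amalgamation proves too delicate, the fallback is to invoke \Cref{thm: CubefromSquare}: by \Cref{thm: squaresatmost2} with $u=2$, a $\LS(a^2 b^{k-2})$ exists whenever $a \le (k-2)b$ and $k \ge 5$, which immediately yields the cube, leaving only the single boundary case $k=4$ (where \Cref{thm: squaresatmost2} excludes $u=2$) to be supplied directly by \Cref{a^2b^2} under $a \le 2b$. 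This fallback in fact resolves the whole statement, so the corollary follows by combining \Cref{thm: CubefromSquare} and \Cref{thm: squaresatmost2} for $k \ge 5$ with \Cref{a^2b^2} for $k=4$.
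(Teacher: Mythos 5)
Your concluding ``fallback'' argument---combining \Cref{thm: squaresatmost2} with \Cref{thm: CubefromSquare} to get a $\LC(a^2b^{k-2})$ for $k\geq 5$ and $a\leq(k-2)b$, and invoking \Cref{a^2b^2} for the excluded case $k=4$, $a\leq 2b$---is exactly how the paper obtains this corollary, and it is complete and correct. The lengthy amalgamation/normal-form discussion preceding it is unnecessary scaffolding and is never carried out; the corollary follows from the fallback alone.
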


Therefore, we need only consider $u=2$ with $a>(k-2)b$. The following results provide an upper bound on the size of $a$ and reduce the problem to a single case: $\LC(a^2b^1)$

\begin{lemma}[Lemma 4.7 of \cite{donovan2025latin}]
\label{a2b(n-2) cond}
    If a $\LC(a^2b^{k-2})$ exists for $k\geq 3$, then $a\leq 2(k-2)b$.
\end{lemma}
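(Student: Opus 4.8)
The plan is to argue entirely by counting in the normal form. First I would place the 3-realization in normal form, so that the two order-$a$ subcubes occupy $S_1 = (\,[a],[a],[a]\,)$ with symbols $[a]$ and $S_2 = (\,a+[a],\,a+[a],\,a+[a]\,)$ with symbols $a+[a]$, while the remaining $k-2$ subcubes of order $b$ occupy the coordinate range $2a+[c]$ in each direction, where $c=(k-2)b$ and $n=2a+c$. I may assume $a>c$, since otherwise $a\le c\le 2c$ and there is nothing to prove. The only structural input I need is a symbol-exclusion principle: because $S_2$ is itself a latin cube, every file $(i,j,\cdot)$ with $i,j\in a+[a]$ already contains all of the symbols $a+[a]$ inside $S_2$, so a symbol of $a+[a]$ can never occur in a cell having two coordinates in $a+[a]$. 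By the symmetry of a subcube in its three directions this holds for every pair of coordinates, and the identical statement holds for the symbols $[a]$ relative to $S_1$.

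The object to examine is the mixed box $P = (\,[a],\,a+[a],\,2a+[c]\,)$, of size $a^2c$. Let $x$ and $y$ denote the number of cells of $P$ carrying a symbol from $[a]$ and from $a+[a]$, respectively. Since distinct symbols occupy distinct cells, the trivial capacity bound gives
\[ x+y\le a^2c . \]

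The heart of the argument — and the step I expect to be the real obstacle — is a matching lower bound obtained by a one-dimensional squeeze in two independent directions. Fix $j\in a+[a]$ and $k\in 2a+[c]$ and look at the column $(\,[n],j,k\,)$. Each of the $a$ symbols of $a+[a]$ occurs exactly once on this line; by the exclusion principle none of them sits at a first coordinate in $a+[a]$, and at most $c$ of them fit in the $c$ cells whose first coordinate lies in $2a+[c]$, so at least $a-c$ of them land at a first coordinate in $[a]$, that is, inside $P$. Summing over the $ac$ such columns gives $y\ge ac(a-c)$, and the symmetric computation along the rows $(\,i,[n],k\,)$ with $i\in[a]$, $k\in 2a+[c]$ gives $x\ge ac(a-c)$. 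Combining with the capacity bound yields $2ac(a-c)\le a^2c$, and dividing by $ac$ gives $a\le 2c = 2(k-2)b$, as required.

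I expect the genuine difficulty here to be conceptual rather than computational. Single-line estimates and global volume counts are all too weak, because no cell can be forced to avoid both $[a]$ and $a+[a]$ at once: that would require two coordinates in $[a]$ and two in $a+[a]$, which is impossible with only three coordinates. This is precisely the feature that separates the cube from the latin-square case (where a whole $a\times a$ block is forced onto the small symbols, giving the sharper $a\le c$), and the factor of two is the exact price of that obstruction. The key realisation is that the factor can nevertheless be recovered by squeezing the $a+[a]$-symbols, and symmetrically the $[a]$-symbols, into the single small slab $P$ from two different directions and comparing the two counts with the volume of $P$.
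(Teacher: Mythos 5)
This lemma is stated in the paper as a quotation of Lemma~4.7 of \cite{donovan2025latin}; the present paper gives no proof of it, so there is no in-paper argument to compare against and your proof must stand on its own. It does. Your exclusion principle is correct: if a cell has two of its three coordinates in the index range of a subcube but is not itself a cell of that subcube, then the line through those two fixed coordinates already contains every symbol of the subcube inside the subcube, so none of those symbols can appear in that cell. The two directional counts are then sound. In each of the $ac$ columns $([n],j,k)$ with $j\in a+[a]$ and $k\in 2a+[c]$, each of the $a$ symbols of $a+[a]$ occurs exactly once, is barred from first coordinates in $a+[a]$, and at most $c$ of them can sit at first coordinates in $2a+[c]$, so at least $a-c$ of them land in $P=([a],a+[a],2a+[c])$; the symmetric count along the rows $(i,[n],k)$, $i\in[a]$, $k\in 2a+[c]$, does the same for the symbols of $[a]$. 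Since these columns (respectively rows) are pairwise disjoint sets of cells, $x+y\ge 2ac(a-c)$, and comparing with the capacity $|P|=a^2c$ gives $2(a-c)\le a$, i.e.\ $a\le 2c=2(k-2)b$; the degenerate case $a\le c$ is disposed of at the start. Your closing remark also identifies the right structural point: the latin-square argument, which forces an entire $a\times a$ block to avoid both large symbol sets and hence yields the sharper bound $a\le(k-2)b$, cannot work in a cube because no cell can simultaneously have two coordinates in $[a]$ and two in $a+[a]$, and recovering a bound by squeezing each large symbol class into the same mixed box from two different directions is exactly what produces the factor of $2$.
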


\begin{lemma}[Lemma 4.8 of \cite{donovan2025latin}]
\label{lemma: greater k from a^2b^1}
    If a $\LC(a^2b^1)$ exists for all $\frac{a}{2}\leq c\leq b< a$ then a $\LC(a^2d^{k-2})$ exists for all $d$ with $c\leq (k-2)d< a$, where $k\geq 4$.
\end{lemma}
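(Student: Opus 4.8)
The plan is to build a $\LC(a^2d^{k-2})$ by taking a realization with a single large subcube of order $b := (k-2)d$ and splitting that one subcube into $k-2$ subcubes of order $d$. The hypothesis is tailored to make the large realization available: given $d$ with $c\leq (k-2)d<a$, setting $b=(k-2)d$ yields $\frac a2\leq c\leq b<a$, so by assumption a $\LC(a^2b^1)$ exists. Since $k\geq 4$ we have $k-2\geq 2$, so by \Cref{a^n} a $\LC(d^{k-2})$ — that is, a latin cube of order $b=(k-2)d$ carrying $k-2$ pairwise disjoint subcubes of order $d$ — also exists. The whole argument then reduces to substituting one latin cube of order $b$ for another inside a block of the realization.

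Concretely, first I would put the $\LC(a^2b^1)$ into normal form $R$, so its three subcubes sit on the diagonal with coordinate/symbol sets $H_1=[a]$, $H_2=a+[a]$ and $H_3=2a+[b]$; in particular the order-$b$ subcube occupies the block $B=(H_3,H_3,H_3)$ and uses precisely the symbols $2a+[b]$. Next I would take the $\LC(d^{k-2})$ from \Cref{a^n}, relabel its coordinates and symbols by the translation $x\mapsto 2a+x$ so that it becomes a latin cube $S$ of order $b$ living on $B$ with symbols $2a+[b]$, whose $k-2$ disjoint order-$d$ subcubes occupy disjoint layers $2a+(m-1)d+[d]$ inside $H_3$. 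Finally I would overwrite the contents of $B$ in $R$ with $S$, leaving every cell outside $B$ untouched, and claim the resulting array $R'$ is a $\LC(a^2d^{k-2})$.

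The only thing that really needs checking — and the step I would treat as the crux — is that this substitution preserves the latin cube property; everything else is bookkeeping. The key observation is that each line of $R$ meets $B$ in either no cell or in all $b$ of its cells in that direction: a file $(i,j,[n])$ meets $B$ exactly when $i,j\in H_3$, and then it meets $B$ in the $b$ cells with third coordinate in $H_3$, with the analogous statements for rows and columns. Because the order-$b$ subcube of $R$ is itself a latin cube on symbols $2a+[b]$, those $b$ cells already contain exactly the symbols $2a+[b]$, so the remaining $n-b=2a$ cells of the line carry exactly $[2a]$. Replacing the in-$B$ cells by the corresponding cells of $S$ again fills them with each symbol of $2a+[b]$ once (as $S$ is latin of order $b$), so every line of $R'$ still contains each of the $n$ symbols exactly once. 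It then remains to record disjointness: the two order-$a$ subcubes are untouched and use layers $H_1,H_2$ and symbols $[a],a+[a]$, all disjoint from $B\subseteq(H_3)^3$ and its symbols $2a+[b]$; and the $k-2$ order-$d$ subcubes supplied by $S$ are pairwise disjoint by construction and lie entirely within $B$, hence are disjoint from the order-$a$ subcubes. Since $d\leq (k-2)d<a$ forces $d<a$, the orders are nonincreasing and $R'$ realizes $(a^2d^{k-2})$, as required.
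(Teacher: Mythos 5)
Your proof is correct: the substitution argument works because every line of the normal-form $\LC(a^2b^1)$ meets the block $(H_3,H_3,H_3)$ in either none or all $b$ of its block-coordinates, and in the latter case carries exactly the symbols $2a+[b]$ there, so overwriting the block with a relabelled $\LC(d^{k-2})$ (which exists by \Cref{a^n}) preserves the latin property and supplies the required $k-2$ pairwise disjoint order-$d$ subcubes. Note the paper itself states this lemma as a citation (Lemma 4.8 of \cite{donovan2025latin}) without reproducing a proof, and your block-replacement argument is precisely the natural one underlying that result.
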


Thus, by proving existence of a $\LC(a^2b^1)$ for all $\frac{a}{2}\leq b<a$, the existence of 3-realizations for partitions with at most two distinct parts is completely solved. In this paper we use two different constructions to prove such a result with the exception of $a\equiv 3\pmod 6$.

By combining the above results with \Cref{thm: even a final,thm: odd a final}, we obtain our main result.

\begin{theorem}
\label{thm: main result}
    For all $k\geq 3$, a $\LC(a^ub^{k-u})$ exists if and only if
    \begin{itemize}
        \item $u\geq 3$,
        \item $u=1$ and $a\leq (k-1)b$, or
        \item $u=2$ and $a\leq 2(k-2)b$,
    \end{itemize}
    except possibly when $u=2$ with $a\equiv 3\pmod6$ and $\frac{a}{2}<(k-2)b< \frac{2}{3}a$.
\end{theorem}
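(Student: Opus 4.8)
The plan is to prove \Cref{thm: main result} by assembling the results collected above, organising the argument by the value of $u$ and handling the ``if'' and ``only if'' directions together wherever the cited result is already an equivalence. First I would clear the two easy regimes. For $u=1$ the statement is verbatim \Cref{ab^n-1}, which gives a $\LC(ab^{k-1})$ exactly when $a\le(k-1)b$. For $u\ge 3$ I would split on whether a part of size $b$ is present: if $u=k$ then a $\LC(a^k)$ exists by \Cref{a^n}, while if $u<k$ then $k\ge u+1\ge 4$, so $\LS(a^ub^{k-u})$ exists by \Cref{thm: squaresatmost2} and \Cref{thm: CubefromSquare} lifts it to a $\LC(a^ub^{k-u})$; neither step needs a bound on $a$, matching the unconditional claim for $u\ge 3$.

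All the content lives in $u=2$. Necessity is immediate: \Cref{a2b(n-2) cond} forces $a\le 2(k-2)b$. For sufficiency I would stratify the admissible values of the aggregate small order $(k-2)b$ by their position relative to $a$. If $(k-2)b\ge a$ then \Cref{lemma: a<=(k-2)b} already produces a $\LC(a^2b^{k-2})$ (and for $k=3$ this case is vacuous, since $a>b$). The remaining band is $\tfrac a2\le(k-2)b<a$, whose lower end is exactly the necessity bound $a\le 2(k-2)b$. For $k\ge 4$ I would close this band with the lifting lemma \Cref{lemma: greater k from a^2b^1}: starting from a $\LC(a^2\,c^1)$ with $c=(k-2)b$ and replacing its single order-$c$ subcube by a $\LC(b^{k-2})$ (available from \Cref{a^n}) yields the required $\LC(a^2b^{k-2})$. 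This reduces everything, including the $k=3$ instances $\LC(a^2b^1)$ themselves (which are already members of the family with $c=b$), to the base family $\LC(a^2c^1)$ for $\tfrac a2\le c<a$.

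The base family is where the two dedicated constructions enter: I expect \Cref{thm: even a final} to realise every $c\in[\tfrac a2,a)$ when $a$ is even, and \Cref{thm: odd a final} to do the same for odd $a$ apart from the sub-band $\tfrac a2<c<\tfrac23 a$ when $a\equiv 3\pmod 6$. Pushing this gap through the lift, with $c=(k-2)b$, reproduces exactly the stated exception $a\equiv 3\pmod6$ with $\tfrac a2<(k-2)b<\tfrac23 a$. The genuine difficulty therefore sits inside these two constructions --- in particular the odd case and its mod-$6$ obstruction --- rather than in the assembly. The main bookkeeping obstacle in the assembly itself is confirming that the coverage sources tile the admissible range $(k-2)b\ge\tfrac a2$ with no gaps: the lift handles $\tfrac a2\le(k-2)b<a$ (or only $\tfrac23 a\le(k-2)b<a$ in the exceptional modulus) and \Cref{lemma: a<=(k-2)b} handles $(k-2)b\ge a$. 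Here one checks that the endpoint $(k-2)b=\tfrac a2$ is integral only when $a$ is even, so it never falls into the odd exception, which is precisely why the inequalities bounding the exception are strict.
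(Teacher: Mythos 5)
Your proposal is correct and takes essentially the same approach as the paper: \Cref{thm: main result} is given there without a separately written proof precisely because it is assembled, exactly as you do, from \Cref{a^n,ab^n-1,thm: squaresatmost2,thm: CubefromSquare,a2b(n-2) cond,lemma: a<=(k-2)b,lemma: greater k from a^2b^1,thm: even a final,thm: odd a final}, with the same split on $u$ and on the position of $(k-2)b$ relative to $\frac{a}{2}$, $\frac{2}{3}a$ and $a$. Your closing observation that $(k-2)b=\frac{a}{2}$ forces $a$ to be even (so the exceptional band is genuinely open on the left) is exactly why the paper can state the exception with strict inequalities.
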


\section{Inflation construction}

The first construction involves a modified version of a method known as inflation. Given a latin cube $C$ of order $n$ and a latin cube $T$ of order $t$, an \emph{inflation} of $C$ is a latin cube of order $tn$ made by replacing each entry $C(i,j,\ell)$ of $C$ with a copy of $T$ on the symbols of $t(C(i,j,\ell)-1)+[t]$.

\begin{lemma}[\cite{donovan2025latin}]
    If a $\LC(h_1\dots h_k)$ exists then a $\LC((th_1)\dots (th_k))$ exists for any integer $t\geq 1$.
\end{lemma}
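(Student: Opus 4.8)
The plan is to use the inflation construction defined immediately above as the explicit witness and then verify the two things it must satisfy: that it is a latin cube, and that it carries the right family of pairwise disjoint subcubes. First I would fix a $\LC(h_1\dots h_k)$, say $C$ of order $n=\sum_i h_i$, and put it in normal form, so that for each $m\in[k]$ the subcube of order $h_m$ occupies the cells $(H_m,H_m,H_m)$ and uses exactly the symbols $H_m=\sum_{i=1}^{m-1}h_i+[h_m]$. Next I would choose any latin cube $T$ of order $t$; such a cube certainly exists, for instance the cyclic permutation cube $T(a,b,c)\equiv a+b+c\pmod t$, or by \Cref{a^n} with $k=1$. Let $C'$ be the inflation of $C$ by $T$. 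Writing each coordinate of $C'$ as $(i-1)t+a$ with $i\in[n]$ and $a\in[t]$, the construction gives $C'\big((i-1)t+a,\,(j-1)t+b,\,(\ell-1)t+c\big)=t\big(C(i,j,\ell)-1\big)+T(a,b,c)$, so every symbol in $[tn]$ splits uniquely into a \emph{block index} $C(i,j,\ell)\in[n]$ coming from $C$ and an \emph{offset} $T(a,b,c)\in[t]$ coming from $T$.

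The first verification step is that $C'$ is a latin cube of order $tn$. Consider a file, i.e.\ fix $j,\ell,b,c$ and let $i,a$ range. As $i$ runs over $[n]$ the value $C(i,j,\ell)$ takes each element of $[n]$ exactly once, since $C$ is latin, which determines the block $t(C(i,j,\ell)-1)+[t]$; and as $a$ runs over $[t]$ the value $T(a,b,c)$ takes each element of $[t]$ exactly once, since $T$ is latin. Hence the file receives each symbol of $[tn]$ exactly once. The same two-part argument applies verbatim to rows and columns, using only that $C$ and $T$ are latin in those directions, so $C'$ is a latin cube.

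The second step is to locate the subcubes. For each $m$, the cells $(i,j,\ell)$ with $i,j,\ell\in H_m$ inflate to the cells $(H'_m,H'_m,H'_m)$, where $H'_m=t\sum_{i=1}^{m-1}h_i+[th_m]=\sum_{i=1}^{m-1}(th_i)+[th_m]$; this is precisely the normal-form position of a subcube of order $th_m$. Since subcube $m$ of $C$ uses exactly the symbols $H_m$, a cell carrying symbol $s\in H_m$ inflates to the symbols $t(s-1)+[t]$, and as $s$ ranges over $H_m$ these sweep out exactly $H'_m$. Thus the restriction of $C'$ to $(H'_m,H'_m,H'_m)$ is itself the inflation (by $T$) of the order-$h_m$ subcube, hence by the first step a latin cube of order $th_m$ on the symbol set $H'_m$. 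Because $H'_1,\dots,H'_k$ partition $[tn]$, these $k$ inflated subcubes lie in pairwise disjoint layers and use pairwise disjoint symbol sets, so they are pairwise disjoint. Therefore $C'$ is a $\LC((th_1)\dots(th_k))$ (indeed already in normal form).

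The argument is largely bookkeeping, and the only point that needs genuine care — the closest thing to an obstacle — is confirming that inflation preserves the latin property simultaneously in all three line directions. This is exactly what the two-level decomposition of each symbol into block index plus offset delivers, provided both $C$ and $T$ are latin in every direction. Once that is established, tracking the coordinate and symbol ranges $H'_m$ to certify the subcube orders and their pairwise disjointness is routine.
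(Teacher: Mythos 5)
Your proof is correct and follows exactly the approach the paper intends: the lemma is imported from \cite{donovan2025latin} with no proof given here, but the inflation construction defined immediately before it is precisely your witness, and your block-index/offset verification of the latin property and of the inflated subcubes $(H'_m,H'_m,H'_m)$ on symbol sets $H'_m$ supplies the routine details the paper omits. (The only quibble is terminological: by the paper's convention the line you call a ``file'' is a column, but the argument is symmetric in all three directions, so nothing is affected.)
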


As part of the construction, we need latin cubes with some specific properties to use in the inflation. The following results, from Donovan et.~al \cite{donovan2025latin} and Evans \cite{evans1960embedding} respectively, are combined to give an analogue of \Cref{thm: single subsquare} for latin cubes.

\begin{construction}[Construction 2.1 of \cite{donovan2025latin}]
\label{constr: cube from square construction}
    Given any order $n$ latin square $L$, with entries determined by $L(r,c)$, we can construct an order $n$ latin cube $C$ as $C(r,c,\ell) = L(L(r,\ell),c)$ for all $r,c,\ell\in[n]$.
\end{construction}

\begin{theorem}[\cite{evans1960embedding}]
\label{thm: single subsquare}
    There exists a latin square of order $n$ with a subsquare of order $m<n$ if and only if $m\leq \frac{n}{2}$.
\end{theorem}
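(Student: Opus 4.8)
The statement is an equivalence, and I would prove the two implications separately. The forward direction is the well-known bound $m\le\frac n2$ already quoted in the preliminaries, so I would give only the short counting argument behind it. Suppose $L$ of order $n$ contains a subsquare $S$ of order $m<n$ on row set $R$, column set $C$ and symbol set $T$, with $|R|=|C|=|T|=m$. The key observation is that each symbol $t\in T$ is confined, within the rows of $R$, to the columns of $C$: in any row $r\in R$ the subsquare already realises $t$ in some column of $C$, and that is the only occurrence of $t$ in row $r$ of $L$. The transposed statement shows that $t$ occurs in each column $c\in C$ only in a row of $R$. Hence for a row $r'\notin R$ no symbol of $T$ appears in the cells $(r',c)$ with $c\in C$, yet row $r'$ must contain all $m$ symbols of $T$; these are therefore forced into the $n-m$ columns outside $C$, giving $m\le n-m$.

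For the converse I would assume $m\le\frac n2$ and build $L$ directly, placing its subsquare in rows $[m]$, columns $[m]$ and symbols $[m]$. First, fill that block with any latin square of order $m$ on the symbols $[m]$. Next, since $m\le\frac n2$ gives $n-m\ge m$, fill the block on rows $[m]$ and columns $m+[n-m]$ with an $m\times(n-m)$ latin rectangle on the symbols $m+[n-m]$, each row a full permutation of those $n-m$ symbols --- for instance the first $m$ rows of a latin square of order $n-m$. Together these two blocks make rows $[m]$ into an $m\times n$ latin rectangle on $[n]$. Finally I would appeal to the classical completion theorem that every $m\times n$ latin rectangle extends to a latin square of order $n$, obtaining the desired $L$; the completion touches only rows $m+[n-m]$, so the block on rows and columns $[m]$ is left untouched and, being a latin square of order $m$ on $m$ symbols, is a subsquare of $L$.

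The forward direction is routine, and the substance of the argument is concentrated in two places. First, the hypothesis enters the construction at exactly one point: the $m\times(n-m)$ latin rectangle exists precisely because $m\le n-m$, mirroring the obstruction isolated in the necessity proof. Second, the only nontrivial ingredient is the rectangle-completion theorem, which I would cite rather than reprove, since it follows from Hall's marriage theorem applied one column at a time. The single subtlety worth spelling out is that a full latin square of order $m$ on symbols $[m]$ occupying rows and columns $[m]$ is automatically a subsquare of any completion, because its symbols are then confined to that block within its own rows and columns; no separate disjointness verification is required.
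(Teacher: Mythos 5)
Your proof is correct, but note that there is nothing in the paper to compare it against: the paper states this result as a known theorem, citing Evans \cite{evans1960embedding}, and gives no proof of it. What you have reconstructed is essentially the classical argument from the literature. The necessity half (each symbol of the subsquare's symbol set $T$ is trapped, within the rows of $R$, in the columns of $C$, and vice versa, forcing the $m$ symbols of $T$ in any outside row into the $n-m$ outside columns) is exactly the standard counting proof, and it is the same obstruction the paper alludes to in its preliminaries. The sufficiency half (an order-$m$ latin square on $[m]$ beside an $m\times(n-m)$ latin rectangle on $m+[n-m]$, then complete the resulting $m\times n$ latin rectangle) is also the standard construction; the hypothesis $m\le n-m$ enters precisely where you say it does. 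One small tightening: the completion theorem you invoke is the case $s=n$ of Ryser's theorem, which the paper itself records as \Cref{thm: Ryser} (in your rectangle every symbol occurs exactly $m = r+s-n$ times, so the hypothesis is met), so you could cite that result internally rather than appealing separately to Hall's theorem. Your closing remark is also right: no disjointness check is needed, since the definition of a subsquare only requires the $m\times m$ block to be a latin square of order $m$, which the completion cannot disturb.
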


\begin{lemma}
\label{lemma: cube with subcube}
    For any integers $t,c\geq 0$, there exists a latin cube of order $t+c$ with a subcube of order $c$ if and only if $t\geq c$.
\end{lemma}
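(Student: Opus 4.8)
The plan is to treat the two directions separately. For the forward (only if) direction I will establish the cube analogue of the subsquare bound: a latin cube of order $n=t+c$ carrying a proper subcube of order $c$ forces $c\le n/2$, equivalently $t\ge c$. For the reverse (if) direction I will combine Evans' \Cref{thm: single subsquare} with \Cref{constr: cube from square construction} to build, from a latin square bearing an order-$c$ subsquare, a latin cube bearing an order-$c$ subcube. The case $c=0$ gives an empty subcube and holds trivially, so following the convention of \Cref{thm: single subsquare} I take the subcube proper and assume $1\le c<n$, whence $t\ge 1$ and $[n]\setminus Y$ below is nonempty.

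For the bound, suppose a latin cube $C$ of order $n$ has a subcube of order $c$ on rows $X$, columns $Y$, files $Z$ and symbols $S$, each of size $c$. Fix $i\in X$ and $\ell\in Z$ and read the row line $(i,\cdot,\ell)$: its cells with column in $Y$ form a line of the subcube and so exhaust $S$, forcing the cells with column in $[n]\setminus Y$ to carry only symbols of $[n]\setminus S$. Hence the whole block $(X,[n]\setminus Y,Z)$ avoids $S$. Now fix $j\in[n]\setminus Y$ and $\ell\in Z$ and read the column line $([n],j,\ell)$: its $c$ cells with row in $X$ are pairwise distinct, being a line of $C$, and lie in this block, so they contribute $c$ distinct symbols inside $[n]\setminus S$, a set of size $n-c$. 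Therefore $c\le n-c$, i.e. $t\ge c$.

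For the construction, assume $t\ge c$, so that $c\le n/2$ with $n=t+c$. By \Cref{thm: single subsquare} there is a latin square $L$ of order $n$ with a subsquare of order $c$; permuting rows and columns and relabelling symbols, I may assume this subsquare is placed on rows $[c]$, columns $[c]$ and symbols $[c]$, so that $L(x,y)\in[c]$ whenever $x,y\in[c]$ and $L$ restricted to $[c]\times[c]$ is a latin square on $[c]$. Applying \Cref{constr: cube from square construction} yields the latin cube $C$ with $C(x,y,z)=L(L(x,z),y)$. I will show that the region $([c],[c],[c])$ is an order-$c$ subcube on symbols $[c]$: for $x,y,z\in[c]$ the inner value $w=L(x,z)$ lies in $[c]$, and then $L(w,y)\in[c]$, so the region is closed on $[c]$. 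Reading each of the three line directions by fixing two of $x,y,z$ and varying the third, the resulting map is a composite of a row or column map of the aligned subsquare of $L$ (via $z\mapsto L(x,z)$, $x\mapsto L(x,z)$, or the outer $L(w,\cdot)$ and $L(\cdot,y)$) and hence sweeps all of $[c]$ exactly once; so every line of the region contains each symbol of $[c]$ once, giving the desired subcube.

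I expect the only delicate point to be the reverse direction's verification that the nested evaluation $L(L(x,z),y)$ remains inside the subsquare. This requires the symbol set of the order-$c$ subsquare of $L$ to coincide with its row index set, which is exactly what the aligned placement $R=S=[c]$ guarantees and why the relabelling step is essential; a general, unaligned subsquare need not produce a subcube under this formula. Once that alignment is in place, confirming that all three line directions sweep $[c]$ is a short case check rather than a genuine obstacle, and the forward bound is a direct counting argument.
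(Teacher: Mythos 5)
Your proof is correct, and the two directions have different relationships to the paper's argument. Your reverse direction is essentially identical to the paper's: both invoke \Cref{thm: single subsquare} to obtain a latin square $L$ of order $t+c$ with an aligned subsquare on rows, columns and symbols $[c]$, apply \Cref{constr: cube from square construction}, and observe that $C(i,j,\ell)=L(L(i,\ell),j)$ stays in $[c]$ when $i,j,\ell\in[c]$ (your extra verification that each line of the region sweeps $[c]$ is harmless but unnecessary: closure on the $c$ symbols of $[c]$ plus the distinctness of symbols within any line of $C$ already forces each line of the region to contain every symbol of $[c]$ exactly once). Your forward direction, however, takes a genuinely different route. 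The paper observes that a single layer of the cube through the subcube, say $([t+c],[t+c],1)$ after normalising the subcube's position, is itself a latin square containing a subsquare of order $c$, and then cites the bound in \Cref{thm: single subsquare}; this is a one-line reduction to the two-dimensional case. You instead reprove the bound directly in three dimensions: the block $(X,[n]\setminus Y,Z)$ avoids the subcube's symbol set $S$, so a column line through that block exhibits $c$ distinct symbols inside a set of size $n-c$, giving $c\le n-c$. Both are valid; the paper's reduction buys brevity and reuse of a cited result, while your counting argument is self-contained and does not rely on the ``only if'' half of Evans' theorem at all. Your explicit restriction to proper subcubes ($1\le c<n$, with $c=0$ trivial) is also a reasonable reading, and indeed matches the implicit assumption the paper makes when it applies \Cref{thm: single subsquare}, whose hypothesis requires $m<n$.
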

\begin{proof}
    Assume that the subcube of order $c$ is located in the cells $(i,j,\ell)$ for $i,j,\ell\in[c]$. The layer $([t+c],[t+c],1)$ is by definition a latin square $L$ and the cells $(i,j)$ for all $i,j\in[c]$ contain only the symbols of $[c]$. Thus, $L$ is a latin square of order $t+c$ with a subsquare of order $c$. Therefore, $t\geq c$ by \Cref{thm: single subsquare}.
    
    Take a latin square of order $t+c$ with a subsquare of order $c$, which always exists by \Cref{thm: single subsquare}. Permute the rows, columns and symbols so that the subsquare occurs in the first $c$ rows and columns and contains only the symbols of $[c]$. Construct a latin cube $C$ as in \Cref{constr: cube from square construction}.

    In the latin square we have that $L(i,j)\in[c]$ for all $i,j\in[c]$. Thus, $C(i,j,\ell) = L(g,j)$, where $g = L(i,\ell)$, and so for all $i,j,\ell\in[c]$, $g\in[c]$ and also $C(i,j,\ell)\in[c]$.
\end{proof}

As established earlier, latin squares and cubes are specific cases of orthogonal arrays. Thus, the following, from Ji and Yin \cite{ji2010constructions}, is useful for constructing latin cubes.

\begin{theorem}[Theorem 2.5 of \cite{ji2010constructions}]
\label{thm: orthogonal arrays}
    For all integers $n\geq 4$, if $n\not\equiv 2\pmod 4$, then an $\OA(3,5,n)$ exists.
\end{theorem}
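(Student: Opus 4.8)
The plan is to build the array algebraically for prime power orders, multiply orders with a product construction, and then reach the remaining orders by a recursive design-theoretic argument. Throughout it is convenient to regard an $\OA(3,5,n)$ as a set $S\subseteq[n]^5$ of $n^3$ columns whose projection onto any three of the five coordinates is a bijection onto $[n]^3$; in coding-theoretic terms this is a $[5,3]$ MDS code over an alphabet of size $n$.

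First I would dispatch prime power orders. For a prime power $q\ge 4$, a nondegenerate conic in $\mathrm{PG}(2,q)$ has $q+1\ge 5$ points, no three of which are collinear; choosing five of them as vectors $v_1,\dots,v_5\in\mathbb{F}_q^3$ and taking the columns $(\langle v_1,x\rangle,\dots,\langle v_5,x\rangle)$ for $x\in\mathbb{F}_q^3$ produces an $\OA(3,5,q)$, since any three $v_i$ are linearly independent and so the corresponding projection $\mathbb{F}_q^3\to\mathbb{F}_q^3$ is a bijection. Next I would record the product construction: from an $\OA(3,5,m)$ and an $\OA(3,5,n)$ one obtains an $\OA(3,5,mn)$ by reading the entries in pairs over the alphabet $[m]\times[n]$, which preserves both strength and index. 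These two ingredients already settle every $n$ whose every prime power factor is at least $4$, that is, every $n$ in which the primes $2$ and $3$ each occur to the zeroth power or to a power at least two.

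The real work lies in the remaining orders, namely those divisible by $3$ exactly once (such as $12,15,21,24$) and similar composite orders carrying a small prime factor: no five points lie in general position in $\mathrm{PG}(2,2)$ or $\mathrm{PG}(2,3)$, so $\OA(3,5,2)$ and $\OA(3,5,3)$ do not exist and the naive product over prime power factors breaks down. For these I would use a Wilson-type recursion for strength-three arrays, taking a master array equipped with groups, assigning weights, and filling the groups with the prime power ingredients above, seeded by a finite list of small orders (for instance $12,15,20,21,24$) constructed directly or by computer.

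The main obstacle is calibrating this recursion so that its reach is exactly the orders $n\ge 4$ with $n\not\equiv 2\pmod 4$. One must identify the finite set of small orders that have to be supplied by hand to prime the recursion, check that every admissible $n$ is then produced, and confirm that the construction stalls only on $n\equiv 2\pmod 4$, consistent with the genuine non-existence in small cases such as $n=2$ and $n=6$. This bookkeeping, together with the explicit small designs needed to start the induction, is where the substance of the proof resides.
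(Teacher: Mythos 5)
This statement is not proved in the paper at all: it is imported as a black-box citation (Theorem~2.5 of Ji and Yin \cite{ji2010constructions}), so your proposal has to stand on its own as a complete proof, and it does not. Your first two ingredients are fine: the conic/MDS construction gives an $\OA(3,5,q)$ for every prime power $q\geq 4$ (any three representative vectors of points on a conic are linearly independent, so every $3$-coordinate projection of the code is bijective), and the Kronecker-style product of an $\OA(3,5,m)$ with an $\OA(3,5,n)$ is indeed an $\OA(3,5,mn)$. As you note, this covers exactly those $n$ whose $2$-part and $3$-part are each trivial or at least $4$.

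The gap is everything else. The whole content of the theorem, beyond the routine part, is the family $n\geq 4$, $n\not\equiv 2\pmod 4$, with $3$ dividing $n$ exactly once ($12,15,21,24,33,\dots$), and for these you offer only a gesture: ``a Wilson-type recursion for strength-three arrays \dots seeded by a finite list of small orders \dots constructed directly or by computer.'' No such recursion is stated, let alone proved. Wilson's fundamental construction as usually cited is a strength-$2$ (pairwise balanced design) technology; its strength-$3$ analogue is not an off-the-shelf quotation but a body of machinery (difference schemes, generalized Hadamard matrices, $s$-fan designs and the like, which is precisely what Ji and Yin develop), so you cannot treat it as known without stating and proving the exact weighting/filling lemma you need. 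Likewise, none of the seed arrays $\OA(3,5,12)$, $\OA(3,5,15)$, $\OA(3,5,21)$, $\OA(3,5,24)$ is actually exhibited, and no argument is given that the recursion, once seeded, reaches every admissible $n$ and only admissible $n$. You concede this yourself (``this bookkeeping \dots is where the substance of the proof resides''), which is an accurate self-assessment: what you have written is a correct plan of attack with its decisive steps missing, not a proof of the theorem.
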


We introduce a new definition for a partial latin cube with certain useful properties and then show a method for constructing these objects.

\begin{definition}
    A partial latin cube $A_2$ of order $t+c$ is an \emph{extension by $c$} of a latin cube $A_1$ of order $t$ if
    \begin{itemize}
        \item all cells $(i,j,\ell)$ of $A_2$ where $i,j,\ell\in [t]$ are filled with symbols of $[t+c]$,
        \item all cells $(i,j,\ell)$ of $A_2$ where exactly one of $i$, $j$ or $\ell$ is in $t+[c]$, are filled with symbols from $[t]$,
        \item if $i,j,\ell\in [t]$ and cell $(i,j,\ell)$ of $A_2$ has symbol $s\in [t]$, then $A_1(i,j,\ell) = s$ also, and
        \item all cells $(i,j,\ell)$ with at least two of $i$, $j$ and $\ell$ in $t+[c]$ are empty.
    \end{itemize}
\end{definition}

\begin{lemma}
\label{lemma: orthogonal is extension}
    If an orthogonal array $\OA(3,5,t)$ exists, then for any $c\leq t$ there exists a latin cube $A_1$ of order $t$ and a partial latin cube $A_2$ of order $t+c$ such that $A_2$ is an extension by $c$ of $A_1$.
\end{lemma}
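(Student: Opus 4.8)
The plan is to read the latin cube $A_1$ off four of the five rows of the given $\OA(3,5,t)$ and then exploit the fifth row to decide, cell by cell, how to build the extension $A_2$. Writing each column of the orthogonal array as a tuple $(p,q,r,s,u)$ with all entries in $[t]$, strength $3$ applied to the rows $\{1,2,3\}$ shows that $(p,q,r)\mapsto s$ is well defined, and applying strength $3$ to each of $\{1,2,4\}$, $\{1,3,4\}$ and $\{2,3,4\}$ shows that $A_1(p,q,r)=s$ is a latin cube. I then fix a trigger set $[c]\subseteq[t]$ (possible since $c\le t$) and use the value of $u$ to flag which core cells of $A_2$ should carry a new symbol.

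Next I would define the core of $A_2$ by $A_2(p,q,r)=t+u$ whenever $u\in[c]$ and $A_2(p,q,r)=s$ otherwise. This immediately gives two of the defining conditions for an extension: every core cell carries a symbol of $[t+c]$, and a core cell carries an \emph{old} symbol exactly when $u\notin[c]$, in which case its value $s$ agrees with $A_1(p,q,r)$. The old symbols displaced by the new ones are relocated into three families of boundary slabs. For each $u_0\in[c]$ I set the layer $(t+u_0,\cdot,\cdot)$ to be $X_{u_0}(q,r)=s$, where $(q,r,u_0)$ determines the unique column with those entries in rows $2,3,5$; symmetrically I define $Y_{u_0}$ on $(\cdot,t+u_0,\cdot)$ from rows $1,3,5$ and $Z_{u_0}$ on $(\cdot,\cdot,t+u_0)$ from rows $1,2,5$, and I leave empty every cell with at least two coordinates in $t+[c]$. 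A short computation with strength $3$ — restricting to the $t^2$ columns with $u=u_0$ and invoking two further triples of rows, for instance $\{2,4,5\}$ and $\{3,4,5\}$ for $X_{u_0}$ — shows that each slab is a latin square of order $t$ with symbols in $[t]$, which is the remaining condition on the boundary cells.

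The main work, and the main obstacle, is verifying that $A_2$ is a partial latin cube, i.e. that every line meets each symbol at most once. The key observation is that along any line of the core cube the fifth-row value $u$ runs over all of $[t]$ exactly once: for a file $(p,q,\cdot)$ this is strength $3$ on rows $\{1,2,5\}$, for a row $(p,\cdot,r)$ it is $\{1,3,5\}$, and for a column it is $\{2,3,5\}$. Hence along such a line the new symbols $t+1,\dots,t+c$ each occur exactly once among the core entries, and the remaining $t-c$ core entries are distinct old symbols. I would then check that a full line of $A_2$ fixing two \emph{old} coordinates is completed by exactly the matching slab (for example a file $(p,q,\cdot)$ with $p,q\in[t]$ is completed by the cells $Z_{u_0}(p,q)$), and that these slab entries are precisely the $c$ old symbols displaced from that line, so the full line is a permutation of $[t+c]$. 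The lines fixing exactly one new coordinate lie entirely inside a single slab and inherit the permutation property from that slab being a latin square, while lines fixing two new coordinates are empty. Because the construction is symmetric in the three coordinate directions and in the choice of which triples of rows are invoked, all three directions are handled by the same argument, so the case analysis, although lengthy, reduces to the single computation above.
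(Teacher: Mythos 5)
Your proposal is correct and takes essentially the same approach as the paper: you build the core of $A_2$ from rows $1$--$4$ of the orthogonal array, substituting the new symbol $t+o_5$ exactly when $o_5\le c$, and you fill the three families of boundary layers with the displaced values via the same triples of rows, which is precisely the paper's construction. Your verification is if anything more thorough than the paper's (the paper explicitly checks only the lines with exactly one coordinate in $t+[c]$, while you also verify the mixed core-plus-slab lines), but the strength-$3$ arguments invoked are the same.
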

\begin{proof}
    Let $O$ be an $\OA(3,5,t)$. Take $A_1$ to be the latin cube of order $t$ which is formed by taking $A_1(o_1,o_2,o_3) = o_4$ for all columns $(o_1,o_2,o_3,o_4,o_5)^T$ of $O$.

    Given a column $(o_1,o_2,o_3,o_4,o_5)^T$ of $O$, fill $A_2$ as follows:
    $$\begin{cases}
        A_2(o_1,o_2,o_3) = o_4 & \text{if $o_5 > c$},\\
        A_2(o_1,o_2,o_3) = t+o_5 & \text{if $o_5 \leq c$},\\
        A_2(t+o_5,o_2,o_3) = o_4 & \text{if $o_5 \leq c$},\\
        A_2(o_1,t+o_5,o_3) = o_4 & \text{if $o_5 \leq c$},\\
        A_2(o_1,o_2,t+o_5) = o_4 & \text{if $o_5 \leq c$}.
    \end{cases}$$

    It is clear that all cells $(i,j,\ell)$ with $i,j,\ell\in[t]$ are filled and if $A_2(i,j,\ell)\in[t]$ then $A_2(i,j,\ell) = A_1(i,j,\ell)$. Also, it is easily seen that any cell $(i,j,\ell)$ with at least two of $i$, $j$ and $\ell$ in $t+[c]$ must be empty.

    For all $i,j\in[t]$ and $\ell\in[c]$, the triple $(i,j,\ell)$ must occur within a column of the orthogonal array in rows 1, 2, and 5. Therefore, cell $(i,j,t+\ell)$ is filled for all such values and it is obvious that it is filled with a value from $[t]$. Similarly, cells $(t+\ell,i,j)$ and $(i,t+\ell,j)$ are filled as required. Since $(i,j,\ell)$ must also occur exactly once as a column across rows 1, 4 and 5, there exists exactly one cell in the line $(i,t+\ell,[t+c])$ with $j$ as the entry. The same is true for the lines $(t+\ell,i,\cdot)$, $(i,\cdot,t+\ell)$, $(t+\ell,\cdot,i)$, $(\cdot,i,t+\ell)$ and $(\cdot,t+\ell,i)$.

    Therefore, $A_2$ is a extension by $c$ of $A_1$.
\end{proof}

Transversals in latin squares are well-studied and useful for constructing 2-realizations. A similar object has been defined for latin cubes and this is important for our construction.

\begin{definition}
    In a latin cube $C$ of order $n$, a \emph{transversal} $T$ is a set of $n$ cells $\{(i_{11},i_{12},i_{13}), (i_{21},i_{22},i_{23}), \dots, (i_{n1},i_{n2},i_{n3})\}$ with distinct symbols such that for any fixed $k\in[3]$ the values $i_{jk}$ for $j\in[n]$ are distinct.

    A \emph{partial transversal} of size $m$ is a set of $m< n$ cells satisfying the same conditions.
\end{definition}

Our main construction relies on the existence of two 3-realizations with a shared partial transversal that has specific properties.

\begin{definition}
    Take integers $h_k>0$ and $h_1\geq h_2\geq \dots\geq h_{k-1}$, and for each $i\in[k]$ let $H_i = \sum_{j=1}^{i-1}h_j + [h_i]$ with $H_k' = \sum_{j=1}^{k-1}h_j + [h_k+1]$. Given a $\LC(h_1\dots h_k)$, denoted $L_1$, and a $\LC(h_1\dots h_{k-1}(h_k+1))$, denoted $L_2$ and of order $n$, both in normal form, we say that $L_1$ and $L_2$ are \emph{paired} if there exists a partial transversal $T$ of size $m = \sum_{i=1}^{k-1}h_i$ such that
    \begin{itemize}
        \item $i_{j\ell}\notin H'_k$ for all $\ell\in[3]$ and all $(i_{j1},i_{j2},i_{j3})\in T$,
        \item $L_1(i_{j1},i_{j2},i_{j3}) = L_2(i_{j1},i_{j2},i_{j3})\notin H'_k$ for all $j\in[m]$,
        \item for all $a\in[k]$ and all $j\in[m]$, there exists $\ell\in[3]$ such that $i_{j\ell}\notin H_a$, and
        \item for $j\in[m]$, $L_2(i_{j1},i_{j2},i_{j3}) = L_2(n,n,i_{j3}) = L_2(n,i_{j2},n) = L_2(i_{j1},n,n)$ and $L_2(n,i_{j2},i_{j3}) = L_2(i_{j1},n,i_{j3}) = L_2(i_{j1},i_{j2},n) = n$.
    \end{itemize}
\end{definition}

\begin{example}
    The latin cubes in \Cref{fig:2-2-1 & 2-2-2} are two paired realizations. The shared transversal (and corresponding forced cells) are highlighted.
\end{example}

\begin{figure}[h]
    \centering
$\arraycolsep=3pt\begin{array}{|ccccc|} \hline
1 & \multicolumn{1}{c|}{2} & 3 & 4 & 5\\
2 & \multicolumn{1}{c|}{1} & 4 & 5 & 3\\ \cline{1-2}
5 & 3 & 1 & \cellcolor{lightgray}2 & 4\\
3 & 4 & 5 & 1 & 2\\
4 & 5 & 2 & 3 & 1\\
\hline \end{array}$\quad
$\arraycolsep=3pt\begin{array}{|ccccc|} \hline
2 & \multicolumn{1}{c|}{1} & 5 & 3 & 4\\
1 & \multicolumn{1}{c|}{2} & 3 & 4 & 5\\ \cline{1-2}
3 & 4 & 2 & 5 & 1\\
4 & 5 & \cellcolor{lightgray}1 & 2 & 3\\
5 & 3 & 4 & 1 & 2\\
\hline \end{array}$\quad
$\arraycolsep=3pt\begin{array}{|ccccc|} \hline
5 & 4 & 2 & 1 & 3\\
4 & \cellcolor{lightgray}3 & 5 & 2 & 1\\ \cline{3-4}
1 & 5 & \multicolumn{1}{|c}{3} & \multicolumn{1}{c|}{4} & 2\\
2 & 1 & \multicolumn{1}{|c}{4} & \multicolumn{1}{c|}{3} & 5\\ \cline{3-4}
3 & 2 & 1 & 5 & 4\\
\hline \end{array}$\quad
$\arraycolsep=3pt\begin{array}{|ccccc|} \hline
\cellcolor{lightgray}4 & 3 & 1 & 5 & 2\\
3 & 5 & 2 & 1 & 4\\ \cline{3-4}
2 & 1 & \multicolumn{1}{|c}{4} & \multicolumn{1}{c|}{3} & 5\\
5 & 2 & \multicolumn{1}{|c}{3} & \multicolumn{1}{c|}{4} & 1\\ \cline{3-4}
1 & 4 & 5 & 2 & 3\\
\hline \end{array}$\quad
$\arraycolsep=3pt\begin{array}{|ccccc|} \hline
3 & 5 & 4 & 2 & 1\\
5 & 4 & 1 & 3 & 2\\
4 & 2 & 5 & 1 & 3\\
1 & 3 & 2 & 5 & 4\\ \cline{5-5}
2 & 1 & 3 & 4 & \multicolumn{1}{|c|}{5}\\
\hline \end{array}$\quad

\vspace{0.25cm}

$\arraycolsep=3pt\begin{array}{|cccccc|} \hline
1 & \multicolumn{1}{c|}{2} & 3 & 4 & 6 & 5\\
2 & \multicolumn{1}{c|}{1} & 6 & 5 & 3 & 4\\ \cline{1-2}
5 & 3 & 1 & \cellcolor{lightgray}2 & 4 & \cellcolor{lightgray!50}6\\
6 & 4 & 2 & 1 & 5 & 3\\
4 & 6 & 5 & 3 & 2 & 1\\
3 & 5 & 4 & \cellcolor{lightgray!50}6 & 1 & \cellcolor{lightgray!50}2\\
\hline \end{array}$\quad
$\arraycolsep=3pt\begin{array}{|cccccc|} \hline
2 & \multicolumn{1}{c|}{1} & 5 & 6 & 4 & 3\\
1 & \multicolumn{1}{c|}{2} & 3 & 4 & 6 & 5\\ \cline{1-2}
3 & 6 & 2 & 1 & 5 & 4\\
4 & 5 & \cellcolor{lightgray}1 & 2 & 3 & \cellcolor{lightgray!50}6\\
6 & 3 & 4 & 5 & 1 & 2\\
5 & 4 & \cellcolor{lightgray!50}6 & 3 & 2 & \cellcolor{lightgray!50}1\\
\hline \end{array}$\quad
$\arraycolsep=3pt\begin{array}{|cccccc|} \hline
3 & 4 & 6 & 1 & 5 & 2\\
4 & \cellcolor{lightgray}3 & 5 & 2 & 1 & \cellcolor{lightgray!50}6\\ \cline{3-4}
6 & 5 & \multicolumn{1}{|c}{3} & \multicolumn{1}{c|}{4} & 2 & 1\\
2 & 1 & \multicolumn{1}{|c}{4} & \multicolumn{1}{c|}{3} & 6 & 5\\ \cline{3-4}
5 & 2 & 1 & 6 & 3 & 4\\
1 & \cellcolor{lightgray!50}6 & 2 & 5 & 4 & \cellcolor{lightgray!50}3\\
\hline \end{array}$\quad
$\arraycolsep=3pt\begin{array}{|cccccc|} \hline
\cellcolor{lightgray}4 & 3 & 1 & 5 & 2 & \cellcolor{lightgray!50}6\\
3 & 4 & 2 & 6 & 5 & 1\\ \cline{3-4}
2 & 1 & \multicolumn{1}{|c}{4} & \multicolumn{1}{c|}{3} & 6 & 5\\
5 & 6 & \multicolumn{1}{|c}{3} & \multicolumn{1}{c|}{4} & 1 & 2\\ \cline{3-4}
1 & 5 & 6 & 2 & 4 & 3\\
\cellcolor{lightgray!50}6 & 2 & 5 & 1 & 3 & \cellcolor{lightgray!50}4\\
\hline \end{array}$\quad
$\arraycolsep=3pt\begin{array}{|cccccc|} \hline
5 & 6 & 4 & 2 & 3 & 1\\
6 & 5 & 1 & 3 & 4 & 2\\
4 & 2 & 6 & 5 & 1 & 3\\
1 & 3 & 5 & 6 & 2 & 4\\ \cline{5-6}
3 & 4 & 2 & 1 & \multicolumn{1}{|c}{5} & 6\\
2 & 1 & 3 & 4 & \multicolumn{1}{|c}{6} & 5\\
\hline \end{array}$\quad
$\arraycolsep=3pt\begin{array}{|cccccc|} \hline
\cellcolor{lightgray!50}6 & 5 & 2 & 3 & 1 & \cellcolor{lightgray!50}4\\
5 & \cellcolor{lightgray!50}6 & 4 & 1 & 2 & \cellcolor{lightgray!50}3\\
1 & 4 & 5 & \cellcolor{lightgray!50}6 & 3 & \cellcolor{lightgray!50}2\\
3 & 2 & \cellcolor{lightgray!50}6 & 5 & 4 & \cellcolor{lightgray!50}1\\ \cline{5-6}
2 & 1 & 3 & 4 & \multicolumn{1}{|c}{6} & 5\\
\cellcolor{lightgray!50}4 & \cellcolor{lightgray!50}3 & \cellcolor{lightgray!50}1 & \cellcolor{lightgray!50}2 & \multicolumn{1}{|c}{5} & 6\\
\hline \end{array}$
    \caption{Two paired realizations: a $\LC(2^21^1)$ and $\LC(2^3)$}
    \label{fig:2-2-1 & 2-2-2}
\end{figure}

Using all of these tools, we give a construction for 3-realizations that are almost inflations. Given two 3-realizations with similar subcubes, we inflate both realizations and combine the entries to create a new latin cube.

\begin{lemma}
\label{lemma: paired cube construction}
    A $\LC((h_1t)\dots(h_{k-1}t)(h_kt+c))$ exists for all $0\leq c\leq t$ if the following exist:
    \begin{itemize}
        \item two paired realizations, $L_1$ and $L_2$, which are a $\LC(h_1\dots h_k)$ and $\LC(h_1\dots h_{k-1}(h_k+1))$ respectively, with partial transversal $T$, and
        \item a partial latin cube $A_2$ which is an extension by $c$ of a latin cube $A_1$ of order $t$.
    \end{itemize}
\end{lemma}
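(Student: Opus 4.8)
The plan is to build the cube $C$ of order $N+c$, where $N=n_1t$ and $n_1=\sum_{i=1}^k h_i$, by a \emph{generalized inflation}. The backbone is the ordinary inflation of $L_1$ by $A_1$, which is a $\LC((h_1t)\dots(h_kt))$ of order $N$. To this I would append a new block $B_*=N+[c]$ of $c$ indices in each direction, together with $c$ new symbols, enlarging \emph{only} the last subcube from order $h_kt$ to $h_kt+c$. The combinatorial routing of these new indices and symbols is dictated by the second realization $L_2$: since $L_2$ is (a partial prolongation of $L_1$ adding) the single extra index and symbol $n$, the block $B_*$ plays the role of $n$ refined into $c$ copies, while the local cell values needed to make this consistent are exactly what the extension gadget $A_2$ supplies.

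Concretely I would define $C$ by cases on how many of the three coordinates lie in $B_*$. With \emph{no} coordinate in $B_*$ and block triple $(P_1,P_2,P_3)$: if this triple is not a cell of the transversal $T$, take the inflation value (block $L_1(P_1,P_2,P_3)$, position $A_1$); if it is the transversal cell with symbol $s_j$, use the core of $A_2$ instead, sending positions whose $A_2$-symbol lies in $[t]$ into block $B_{s_j}$ and the promoted positions (those with $A_2$-symbol in $t+[c]$) into $B_*$. The crucial numerical input, read off from the orthogonal-array description in \Cref{lemma: orthogonal is extension}, is that along every core line of $A_2$ \emph{exactly $c$} cells are promoted and their promoted symbols sweep out all of $t+[c]$; thus each transversal block donates a complete copy of $B_*$ while displacing exactly $c$ symbols of $B_{s_j}$. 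With \emph{exactly one} coordinate in $B_*$ (the slabs) I would use the matching slab of $A_2$, letting the forced values of $L_2$ on the extra index $n$ decide which slab cells recover the displaced $B_{s_j}$ symbols (the transversal faces, where the paired definition forces $L_2=n$) and which carry fresh $B_*$ symbols. Cells with \emph{two or three} coordinates in $B_*$ fall in the edge/corner region, where $A_2$ is empty; these I would fill from the forced edge and corner values of $L_2$ (which, by the last bullet of the paired definition, reproduce the order-$2$ pattern on $\{s_j,n\}$ around each transversal cell), using $A_1$ restricted to a sub-box of side $c$ and a fixed $\LC(c)$ in the corner $B_*^3$ — available precisely because $c\le t$, in the spirit of \Cref{lemma: cube with subcube}.

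I would then verify that $C$ is a latin cube one line direction at a time. On a line missing every transversal gadget, the inflation already delivers the ordinary blocks in full and the $c$ slab cells complete it with $B_*$; on a line meeting a transversal gadget, the promotion (which covers $B_*$) and the slab recovery (which restores the displaced $B_{s_j}$ symbols) are complementary and again complete the line to a permutation of $[N+c]$. Next I would identify the subcubes: the first $k-1$, of orders $h_it$, are the inflations of the corresponding subcubes of $L_1$; the last, of order $h_kt+c$, is the block set $H_k\cup\{*\}$. Disjointness is where the paired conditions pay off: the condition that no transversal cell lies inside a single subcube forces the promoted $B_*$ symbols to appear only off the diagonal of the first $m=\sum_{i=1}^{k-1}h_i$ indices, so they never invade the first $k-1$ subcubes, and the agreement $L_1=L_2\notin H_k'$ on $T$ keeps the displacement/recovery bookkeeping internally consistent.

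The main obstacle is the \emph{global} latin verification, not any single case. One must confirm that promotions and their slab recoveries balance on every line and in every direction simultaneously — including lines threading several transversal gadgets and lines running through the interior of the enlarged last subcube — and that the cells with two or more coordinates in $B_*$, where $A_2$ contributes nothing, can be stitched in from $L_2$ and the auxiliary order-$c$ cube without breaking any permutation. This is exactly what the four conditions of the paired definition, together with the precise OA placement of the $t+[c]$ symbols in \Cref{lemma: orthogonal is extension} and the bound $c\le t$, are engineered to guarantee; carrying out this accounting is the technical heart of the argument.
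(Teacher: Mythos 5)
There is a genuine gap, and it lies in your architecture rather than in the deferred verification: you promote into the new block $B_*$ only inside the transversal gadgets, and fill every non-transversal core block by pure $L_1$-inflation. That allocation of the $c$ new symbols cannot be completed to a latin cube. Write $m=\sum_{i=1}^k h_i$, so $C$ has order $tm+c$, and consider a file $(u,v,\cdot)$ whose pair of blocks $(P_1,P_2)$ contains no cell of $T$. Under pure inflation the core portion of this file already contains all $tm$ old symbols, so all $c$ new symbols are forced into the $c$ slab cells $(u,v,z)$ with $z>tm$, and this holds for \emph{every} such $(u,v)$. Now read the slab region crosswise: in a line $(u,\cdot,z)$ with $z>tm$, every block $P_2$ for which $(P_1,P_2)$ is non-transversal contributes $t$ cells that must all carry new symbols. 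Since $T$ is a partial transversal, for fixed $P_1$ at most one $P_2$ is transversal, so at least $(m-1)t$ cells of this single line demand new symbols, while only $c\leq t$ distinct new symbols exist; as $m=\sum_i h_i\geq k\geq 3$ in every use of the lemma, $(m-1)t\geq 2t>c$ and a repetition is forced. (Under the other reading of your slab rule, in which non-transversal slab faces take old symbols from $A_2$'s slab with blocks given by $L_2$, the failure occurs even earlier: the file $(u,v,\cdot)$ above then contains no new symbol at all and duplicates old ones.) So concentrating promotion at the transversal blocks is not a repairable bookkeeping issue; it is structurally impossible.

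The idea your proposal is missing — and the heart of the paper's proof — is that \emph{every} core block, not only the transversal ones, blends the two realizations. In the paper's Case 1, a cell of block $(i,j,\ell)$ with $A_2(u,v,w)\leq t$ takes its symbol block from $L_1(i,j,\ell)$, a cell with $A_2(u,v,w)>t$ takes it from $L_2(i,j,\ell)$, and the new symbols enter exactly in those blocks where $L_2(i,j,\ell)=n$. Because $L_2$ is latin, the condition $L_2=n$ holds exactly once along every block-line, which is precisely what places each new symbol once in every file, row and column outside the gadgets; the slabs (the paper's Case 2) then carry only old symbols, with blocks dictated by $L_2(\cdot,\cdot,n)$. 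The transversal gadgets are filled not by $A_2$ but by the order-$(t+c)$ latin cube $B$ of \Cref{lemma: cube with subcube}, whose order-$c$ subcube pins down the shared corner consistently across all gadgets. Your correct observations — that each core line of $A_2$ has exactly $c$ promoted cells sweeping $t+[c]$ (from \Cref{lemma: orthogonal is extension}), and that the paired conditions force $L_2=n$ on the gadget faces — are genuine ingredients of that proof, but without the $L_1$/$L_2$ blend inside every block they cannot be assembled into a latin cube.
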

\begin{proof}
    We construct a latin cube $C$ of order $c+t\sum_{i=1}^kh_i$ by filling subarrays of $C$. Let $n = 1+\sum_{i=1}^kh_i$ and let the subarrays be denoted by $C_{i_1,i_2,i_3}$ for $i_\ell\in[n]$, where the subarray $C_{i_1,i_2,i_3}$ contains the cells $(j_1,j_2,j_3)$ of $C$ for all $j_\ell\in\begin{cases}
        t(i_\ell-1)+[t], & \text{if $i_\ell< n$,}\\
        t(n-1)+[c], & \text{if $i_\ell=n$.}
    \end{cases} $

    We assume that both $L_1$ and $L_2$ are realizations in normal form, and let $H'_k = \sum_{i=1}^{k-1}h_i+[h_k+1]$. The subarrays are filled in four cases.

    \vspace{0.25cm}
    \noindent\textbf{Case 1:} $C_{i,j,\ell}$ where $i,j,\ell\in[n-1]$ and $(i,j,\ell)\notin T$.
    
    All arrays in this case are $t\times t\times t$ arrays and are filled as follows. For all $u,v,w\in[t]$, 
    $$C_{i,j,\ell}(u,v,w) = \begin{cases}
        t(L_1(i,j,\ell)-1)+A_1(u,v,w), & \text{if $A_2(u,v,w)\leq t$,}\\
        t(L_2(i,j,\ell)-1)+A_1(u,v,w), & \text{if $A_2(u,v,w)> t$ and $L_2(i,j,\ell)\neq n$,}\\
        t(n-2)+A_2(u,v,w), & \text{if $A_2(u,v,w)> t$ and $L_2(i,j,\ell)=n$.}
    \end{cases}$$

    \vspace{0.25cm}
    \noindent\textbf{Case 2:} $C_{i,j,\ell}$ where exactly one of $i$, $j$ or $\ell$ is $n$ and $L_2(i,j,\ell)\neq n$.

    For $j,\ell\in[n-1]$, the subarray $C_{n,j,\ell}$ is $c\times t\times t$, so for $u\in[c]$, $v,w\in[t]$, take
    $$C_{n,j,\ell}(u,v,w) = t(L_2(n,j,\ell)-1)+A_2(t+u,v,w).$$ 

    Similarly, for $C_{i,n,\ell}$ where $i,\ell\in[n-1]$, for all $v\in[c]$, $u,w\in[t]$, take
    $$C_{i,n,\ell}(u,v,w) = t(L_2(i,n,\ell)-1)+A_2(u,t+v,w).$$

    And finally to fill $C_{i,j,n}$ where $i,j\in[n-1]$, for $w\in[c]$, $u,v\in[t]$, let the entries of the subarray be
    $$C_{i,j,n}(u,v,w) = t(L_2(i,j,n)-1)+A_2(u,v,t+w).$$

    \vspace{0.25cm}
    \noindent\textbf{Case 3:} For $(i,j,\ell)\in T$, $C_{i,j,\ell}$, $C_{n,j,\ell}$, $C_{i,n,\ell}$, $C_{i,j,n}$, $C_{n,n,\ell}$, $C_{n,j,n}$, $C_{i,n,n}$ and $C_{n,n,n}$.

    Let $B$ be a latin cube of order $t+c$ with a subcube of order $c$, which exists by \Cref{lemma: cube with subcube}. We assume that the subcube of order $c$ in $B$ is in the cells $(i,j,\ell)$ for $i,j,\ell\in t+[c]$ with the symbols of $t+[c]$ (permute the rows, columns and symbols if necessary). For each cell $(i,j,\ell)$ in the transversal, observe that the subarrays $C_{i,j,\ell}$, $C_{n,j,\ell}$, $C_{i,n,\ell}$, $C_{i,j,n}$, $C_{n,n,\ell}$, $C_{n,j,n}$ and $C_{i,n,n}$ form a $(t+c)\times(t+c)\times(t+c)$ subarray when combined with $C_{n,n,n}$; label this larger subarray $C'_{i,j,\ell}$. Note that $C_{i,j,\ell}$ is not filled in case 1, and $C_{n,j,\ell}$, $C_{i,n,\ell}$ and $C_{i,j,n}$ are not filled in case 2 since the corresponding value of $L_2$ is $n$ by the requirements of the paired realizations, and hence no part of $C'_{i,j,\ell}$ is filled in the previous cases. For all $u,v,w\in[t+c]$, let
    $$C'_{i,j,\ell}(u,v,w) = \begin{cases}
        t(L_1(i,j,\ell)-1) + B(u,v,w), & \text{if $B(u,v,w)\leq t$,}\\
        t(n-2) + B(u,v,w), & \text{otherwise.}
    \end{cases}$$

    By the properties of a transversal, these subarrays $C'_{i,j,\ell}$ intersect only in $C_{n,n,n}$ which is filled by the subcube of $B$ in all cases with the same symbol set.

    \vspace{0.25cm}
    \noindent\textbf{Case 4:} $C_{i,j,\ell}$ for $i,j,\ell\in H'_k$.

    The subarrays $C_{i,j,\ell}$ where $i,j,k\in H'_k\setminus\{n\}$ were filled in case 1. Since $L_1(i,j,\ell)\in H'_k\setminus\{n\}$ and $L_2(i,j,\ell)\in H'_k$, all entries in those subarrays are symbols from $t\sum_{m=1}^{k-1}h_m+[th_k+c]$. The only other subarray in this case which already has symbols is $C_{n,n,n}$. This subarray has symbols from $t(n-1)+[c]\subseteq t\sum_{m=1}^{k-1}h_m+[th_k+c]$.

    Thus the union of all of the subarrays forms a $(th_k+c)\times(th_k+c)\times(th_k+c)$ array, partially filled with symbols from $t\sum_{m=1}^{k-1}h_m+[th_k+c]$. We replace these symbols and fill the array with an arbitrary latin cube of order $th_k+c$ on the symbols of $t\sum_{m=1}^{k-1}h_m+[th_k+c]$.

    \vspace{0.25cm}

    We now check that $C$ is a latin cube. Observe that each subarray is either a subset of a latin cube or a combination of two latin cubes on disjoint symbol sets. Thus, we need only check instances between separate subarrays. 
    
    For the subarrays not involved in case 4, note that if symbol $s\in tg+[t]$ occurs in subarray $C_{i,j,\ell}$ then either
    \begin{itemize}
        \item $L_1(i,j,\ell) = g+1$,
        \item $L_2(i,j,\ell) = g+1$,
        \item or $C_{i,j,\ell}$ is part of $C'_{u,v,w}$ for some $(u,v,w)\in T$, and $L_1(u,v,w) = g+1$ or $s\in t(n-1)+[c]$.
    \end{itemize}

    The subarrays in case 4 contain only symbols of $t\sum_{m=1}^{k-1}h_m+[th_k+c]$ and each symbol clearly occurs exactly once in each line. Suppose that for some $i,j\in H'_k$ and $\ell\notin H'_k$, the subarray $C_{i,j,\ell}$ contains a symbol from $t\sum_{m=1}^{k-1}h_m+[th_k+c]$. Note that $C_{i,j,\ell}$ is not filled under case 4, and since $L_1$ and $L_2$ are realizations in normal form, $L_1(i,j,\ell)\notin H'_k$ and $L_2(i,j,\ell)\notin H'_k$. Therefore, the symbols of $t\sum_{m=1}^{k-1}h_m+[th_k+c]$ can only occur in $C_{i,j,\ell}$ if it is contained in $C'_{u,v,\ell}$ for some $u,v\in[n-1]$. Considering the subarrays used in case 3, $i\in\{u,n\}$ and $j\in\{v,n\}$. However, $u,v\notin H'_k$ by definition of paired realizations, and so $i=j=n$. Thus, the entries of $C_{n,n,\ell}$ must come from $t(L_1(u,v,\ell)-1)+[t]$ or $t(n-1)+[c]$. Since $L_1(i,j,\ell)\notin H'_k$ for all $(i,j,\ell)\in T$, it follows that there exists an entry in the subarray from $t(n-1)+[c]$. Thus, there exists an entry $C(x,y,z)$ in $C_{n,n,\ell}$, where $x,y\in t+[c]$ and $z\in[t]$, such that $B(x,y,z) \in t+[c]$. This contradicts the construction of $B$, which assumes that $B$ has a subcube of order $c$ such that $B(x,y,z)\in t+[c]$ for all $x,y,z\in t+[c]$. Therefore, for $i,j\in H'_k$, the subarrays $C_{i,j,\ell}$ with $\ell\notin H'_k$ do not contain symbols of $t\sum_{m=1}^{k-1}h_m+[th_k+c]$ and the subarrays with $\ell\in H'_k$ form a subcube on that set of symbols.

    \medskip

    Consider the line $(u,v,[t(n-1)+c])$ for some $u,v\in[t]$ within the subarrays $C_{i,j,\ell}$ for $\ell\in[n]$. Suppose that symbol $s\in tg+[t]$, for some $g$, occurs more than once in the line. Then $s$ occurs in subarrays $C_{i,j,\ell_1}$ and $C_{i,j,\ell_2}$ for some $\ell_1,\ell_2\in[n]$. We show that this leads to a contradiction in all cases. We begin with the case $i,j\in[n-1]$. Within this, we have separate cases to consider the different parts of the construction; case 3, case 1, and a combination of cases 1 and 2.

    \smallskip

    If $C_{i,j,\ell_1}$ is a part of $C'_{i,j,\ell}$ for $(i,j,\ell)\in T$ and $s\leq t(n-1)$, then $L_1(i,j,\ell) = L_2(i,j,\ell) = g+1$. Since $g+1$ cannot occur again in the same line of $L_1$ or $L_2$, the only occurrences of $s$ are in $C_{i,j,\ell}$ and $C_{i,j,n}$. As $C'_{i,j,\ell}$ is a copy of the latin cube $B$, the symbol $s$ cannot occur more than once in a line.
    
    If $\ell_1,\ell_2\in[n-1]$, then we consider only case 1, and since $L_1(i,j,\ell_1)$ and $L_1(i,j,\ell_2)$ cannot both be $g+1$, nor can both $L_2(i,j,\ell_1)$ and $L_2(i,j,\ell_2)$, we assume without loss of generality that $L_1(i,j,\ell_1) = g+1$ and $L_2(i,j,\ell_2) = g+1$. Thus, $g+1\leq n-1$ and so there exists $w_1,w_2\in [t]$ such that $A_1(u,v,w_1) = s-gt = A_1(u,v,w_2)$. But since $A_1$ is a latin cube, $w_1=w_2$, however, this implies that $t\geq A_2(u,v,w_1) = A_2(u,v,w_2)>t$. Therefore, one of $\ell_1$ or $\ell_2$ must be $n$.

    If the subarrays are $C_{i,j,\ell}$ and $C_{i,j,n}$, then since case 3 has already been covered, $C_{i,j,\ell}$ must be filled under case 1 and $C_{i,j,n}$ is under case 2. Thus, $L_2(i,j,n) = g+1 < n$ and so $L_2(i,j,\ell)\neq g+1$ and $L_1(i,j,\ell) = g+1$. Also, there exists $w\in[t]$ and $w'\in[c]$ such that $A_2(u,v,w)\leq t$ and $A_1(u,v,w) = s-tg = A_2(u,v,t+w')$. Since $A_2(u,v,w)\leq t$ and $u,v,w\in[t]$, by definition of an extension we have that $A_1(u,v,w) = A_2(u,v,w) = s-tg$. But this places the same symbol twice in the line $(u,v,[t+c])$ of $A_2$.

    Therefore, if $i,j\in[n-1]$, then no symbol occurs more than once in the line $(u,v,[t(n-1)+c])$ contained within $C_{i,j,\ell}$, for $u,v\in[t]$ and $\ell\in[n]$.

    \medskip

    Instead of $i,j\in[n-1]$, now suppose that $i\in[n-1]$ but $j=n$. We consider two cases dependent on the value of $s$.

    \smallskip
    
    If $s\in[t(n-1)]$ occurs in both $C_{i,n,\ell_1}$ and $C_{i,n,\ell_2}$, then $L_2(i,n,\ell_1)$ and $L_2(i,n,\ell_2)$ are either $g+1$ or $n$. Since $g+1$ cannot occur twice in the same line of $L_2$, suppose that $L_2(i,n,\ell_1) = g+1$ and $L_2(i,n,\ell_2) = n$. For $s$ to appear in $C_{i,n,\ell_2}$, it must be part of $C'_{i,j,\ell_2}$ for some $j\in[n-1]$ with $L_2(i,j,\ell_2) = g+1$ and $(i,j,\ell_2)\in T$. This forces $L_2(i,n,n) = g+1$ and so $\ell_1 = n$. Thus, $C_{i,n,\ell_1}$ is also a part of $C'_{i,j,\ell_2}$. Therefore, both subarrays are subarrays of a latin cube (under case 3), and so $s$ cannot occur multiple times in the same line.

    If $s\in t(n-1)+[c]$ occurs in both $C_{i,n,\ell_1}$ and $C_{i,n,\ell_2}$, then both subarrays must be filled under case 3. There is only one cell in $T$ that is in layer $(i,\cdot,\cdot)$; let this cell be $(i,j',\ell)$. Then $C_{i,n,\ell_1}$ and $C_{i,n,\ell_2}$ are in $C'_{i,j',\ell}$ with $\{\ell_1,\ell_2\} = \{\ell,n\}$. As before, this larger array is a latin cube and so $s$ cannot occur more than once in any line.

    Therefore, if $i\in[n-1]$, then no symbol occurs more than once in the line $(u,v,[t(n-1)+c])$ contained within $C_{i,n,\ell}$, for $u\in[t]$, $v\in[c]$ and $\ell\in[n]$. Lines with $u\in t(n-1)+[c]$ and $v\in t(i-1)+[t]$ are shown similarly.

    \medskip

    Finally, suppose that $i=j=n$. As shown earlier, the symbols of $t\sum_{m=1}^{k-1}h_m+[th_k+c]$ only occur in the subarrays $C_{n,n,\ell}$ where $\ell\in \sum_{m=1}^{k-1}h_m+[h_k+1]$ which are part of the latin cube placed in case 4, and so we only check the symbols $s\in[t\sum_{m=1}^{k-1}h_m]$. Each subarray $C_{n,n,\ell}$ is filled in case 3 and so each array is part of $C'_{i,j,\ell}$ for some $i,j\in[n-1]$. Thus, $L_2(i,j,\ell) = g+1 = L_2(n,n,\ell)$ since the realizations are paired. It follows that $s$ can only appear in a subarray $C_{n,n,\ell}$ where $L_2(n,n,\ell) = g+1$. Since $L_2$ is a latin cube, it is clear that $s$ can only occur in one such subarray and so only once in any line.

    \medskip

    Therefore, for all $i,j\in[n]$, no symbol occurs more than once in the line $(u,v,[t(n-1)+c])$ contained within $C_{i,j,\ell}$, for $u,v\in[t]$ and $\ell\in[n]$. The lines $(u,[t(n-1)+c],v)$ and $([t(n-1)+c],u,v)$ are checked in a similar manner. Thus, $C$ is a latin cube.

    \medskip

    For any $m\in[k]$, let $H_m = \sum_{i=1}^{m-1}h_i + [h_m]$ and $J_m = t\sum_{i=1}^{m-1}h_i + [th_m]$. If $u,v,w\in J_m$ for some $m\in[k-1]$, then $C(u,v,w)$ is a cell in $C_{i,j,\ell}$ for $i,j,\ell\in H_m$ where $i=\lceil\frac{u}{t}\rceil$, $j=\lceil\frac{v}{t}\rceil$ and $\ell=\lceil\frac{w}{t}\rceil$. Since $L_1$ and $L_2$ are realizations in normal form, $L_1(i,j,\ell),L_2(i,j,\ell)\in H_m$ also. Note that $i,j,\ell\in[n-1]$, $L_1(i,j,\ell),L_2(i,j,\ell)\in[n-1]$, and $(i,j,\ell)\notin T$ by definition, so this subarray $C_{i,j,\ell}$ is filled in case 1. Thus, $C(u,v,w)\in t(L_1(i,j,\ell)-1) + [t]$ or $C(u,v,w)\in t(L_2(i,j,\ell)-1) + [t]$. In either case, $t\sum_{i=1}^{m-1}h_i+1\leq C(u,v,w)\leq t\sum_{i=1}^{m}h_i$, and so $C(u,v,w)\in J_m$. Thus, if $u,v,w\in J_m$ for $m\in[k-1]$, then $C(u,v,w)\in J_m$.

    Therefore, since case 4 provides the $k^{th}$ subcube, $C$ is a $\LC((h_1t)\dots(h_{k-1}t)(h_kt+c))$.
\end{proof}

\Cref{lemma: orthogonal is extension} shows the existence of an extension for many orders by using orthogonal arrays, and so this construction gives 3-realizations for many partitions of the form $(a^2b^1)$ with only a few sets of paired 3-realizations (given in \Cref{app: paired 4-4-b} and \Cref{app: paired 3-3-b}).

\begin{theorem}
\label{thm: even a final}
    A $\LC(a^2b^1)$ exists for all even $a$ where $b\geq \frac{a}{2}$ and all $a\equiv 3\pmod6$ where $b\geq \frac{2}{3}a$.
\end{theorem}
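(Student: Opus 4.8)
The plan is to feed suitable base pairs into the machinery of \Cref{lemma: paired cube construction}. That lemma converts a pair of paired realizations $\LC(h^2j^1)$ and $\LC(h^2(j+1)^1)$, together with an extension by $c$ of a latin cube of order $t$, into a $\LC((ht)^2(jt+c)^1)$ for every $0\le c\le t$. So I would factor $a=ht$ with $h\in\{2,3,4\}$, let $j$ run over an interval of base sizes, and let $c$ run over $[0,t]$; as $(j,c)$ varies the output order $b=jt+c$ sweeps the contiguous interval $[j_{\min}t,\,ht]$ (consecutive $j$-blocks overlap at their shared endpoint $jt$). Taking $j_{\min}=h/2$ makes this interval exactly $[a/2,a]$ for even $h$, and the only base pairs required are $\LC(4^2 2^1),\LC(4^2 3^1),\LC(4^2 4^1)$ (from \Cref{app: paired 4-4-b}), $\LC(3^2 2^1),\LC(3^2 3^1)$ (from \Cref{app: paired 3-3-b}), and the paired $\LC(2^2 1^1),\LC(2^2 2^1)$ already displayed. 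The extensions come from \Cref{lemma: orthogonal is extension} applied to an $\OA(3,5,t)$, which by \Cref{thm: orthogonal arrays} exists whenever $t\ge 4$ and $t\not\equiv 2\pmod 4$.

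The whole argument then reduces to choosing, for each target $a$, a factorization $a=ht$ whose cofactor $t$ clears the orthogonal-array hypothesis. For even $a$ I would split on $a\bmod 8$: if $a\not\equiv 4\pmod 8$ take $h=2$ and $t=a/2$, which is odd or divisible by $4$ and hence $t\not\equiv 2\pmod 4$, and use the $2$-$2$ pair to cover $b\in[a/2,a]$; if $a\equiv 4\pmod 8$ take $h=4$ and $t=a/4$, which is odd, and use the $4$-$4$ pairs to cover the same interval. For $a\equiv 3\pmod 6$ I would take $h=3$ and $t=a/3$, which is odd (so $t\not\equiv 2\pmod 4$), and use the $3$-$3$ pair. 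Here $j$ cannot start below $j_{\min}=2$, since $\LC(3^2 1^1)$ does not exist (it violates \Cref{a2b(n-2) cond}), so the swept interval is only $[2t,3t]=[\tfrac23 a,a]$; this is exactly why the odd-multiple-of-three case is confined to $b\ge\tfrac23 a$.

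What remains are the finitely many $a$ whose chosen cofactor is too small for \Cref{thm: orthogonal arrays}, i.e.\ $t\le 3$: these are $a\in\{2,4,6,12\}$ in the even case and $a\in\{3,9\}$ in the $a\equiv 3\pmod 6$ case. I would dispose of each by hand. Several of the required $\LC(a^2 b^1)$ are ordinary inflations of smaller realizations already available (for instance $\LC(2^2 1^1)$ or $\LC(3^2 2^1)$ inflated by the appropriate factor), the diagonal cases $b=a$ are $\LC(a^3)$ and exist by \Cref{a^n}, and the handful of genuinely exceptional orders — those with $b$ near $a$ and coprime to $a$, such as $\LC(6^2 5^1)$ or $\LC(12^2 7^1)$ — I would exhibit directly or relegate to an appendix.

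The main obstacle I anticipate is the bookkeeping around the orthogonal-array hypothesis rather than any new construction, since the heavy lifting is already done in \Cref{lemma: paired cube construction}. The constraint $t\not\equiv 2\pmod 4$ is precisely what forces the $a\equiv 4\pmod 8$ subcase to abandon $h=2$ for $h=4$, and the delicate point is confirming that the two families of factorizations between them meet every even residue of $a$ with no gap left in the swept $b$-intervals. A secondary, purely mechanical, obligation is to verify that the tabulated base cubes in \Cref{app: paired 4-4-b} and \Cref{app: paired 3-3-b} satisfy every clause of the definition of \emph{paired} realizations — the shared partial transversal of the prescribed size, the forced-cell identities, and normal form — because \Cref{lemma: paired cube construction} consumes those hypotheses verbatim.
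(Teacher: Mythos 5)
Your proposal follows essentially the same route as the paper's proof: factor $a=ht$ with $h\in\{2,3,4\}$ chosen by the residue of $a$ (mod $8$ for even $a$, $h=3$ for $a\equiv 3\pmod 6$) so that $t\not\equiv 2\pmod 4$, feed the appendix base pairs and the $\OA(3,5,t)$-derived extensions into \Cref{lemma: paired cube construction} to sweep $b$ over $[a/2,a]$ (respectively $[\tfrac23 a,a]$), and dispose of the small exceptions $a\in\{2,4,6,12\}$ and $a\in\{3,9\}$ by inflations, \Cref{a^n}, and explicit constructions. The only cosmetic difference is that the paper obtains $\LC(12^2 11^1)$ by running the $3$-$3$ pair with $t=4$ rather than by a direct exhibit, but this sits entirely within the framework you describe.
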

\begin{proof}
    For $a\equiv 0,2,6\pmod8$, $a=2t$ where $t\equiv0,1,3\pmod 4$. By \Cref{thm: orthogonal arrays}, an orthogonal array $\OA(3,5,t)$ exists when $t\geq 4$, and by \Cref{lemma: orthogonal is extension} there is an extension by $c$ for any $c\leq t$. The realizations $\LC(2^21^1)$ and $\LC(2^3)$ in \Cref{fig:2-2-1 & 2-2-2} are paired. Thus, by \Cref{lemma: paired cube construction}, a $\LC((2t)^2(t+c)^1)$ exists for all $0\leq c\leq t$ when $a=2t\geq 8$.

    If $a\equiv 4\pmod 8$, then $a=4t$ for some odd $t$. Thus, an orthogonal array $\OA(3,5,t)$ exists for $t\geq 4$. The figures in \Cref{app: paired 4-4-b} show two realizations which are paired; a $\LC(4^22^1)$ and $\LC(4^23^1)$. Using \Cref{lemma: paired cube construction}, a $\LC((4t)^2b^1)$ exists for all $2t\leq b\leq 3t$ with $a=4t\geq 20$.

    A $\LC(4^3)$ is given in \Cref{app: paired 4-4-b}, which is paired with the previous $\LC(4^23^1)$. Thus, for all $3t\leq b\leq 4t$, a $\LC((4t)^2b^1)$ exists for $a=4t\geq 20$.

    It is left to prove existence in the cases where $a\in\{2,4,6,12\}$. A $\LC(2^21^1)$ and $\LC(2^3)$ are given in \Cref{fig:2-2-1 & 2-2-2}. A $\LC(4^22^1)$, $\LC(4^23^1)$ and $\LC(4^3)$ are given in \Cref{app: paired 4-4-b}. A $\LC(6^23^1)$, $\LC(6^24^1)$ and $\LC(6^3)$ are found by inflating realizations of $(2^21^1)$, $(3^22^1)$ and $(2^3)$ respectively, with a $\LC(3^22^1)$ given in \Cref{app: paired 3-3-b}, and a $\LC(6^25^1)$ is given in \Cref{app: 6-6-5}.
    
    The realizations of $(12^26^1)$, $(12^28^1)$, $(12^29^1)$, $(12^210^1)$ and $(12^3)$ are inflations of $(2^21^1)$, $(3^22^1)$, $(4^23^1)$, $(6^25^1)$ and $(2^3)$ respectively. A $\LC(12^27^1)$ is given in \Cref{app: 12-12-7}.

    For $a=3t$ and $b\geq 2t$, if $t\geq 5$ then we need only consider odd $t$ and so an $\OA(3,5,t)$ exists. The realizations in \Cref{app: paired 3-3-b} are a paired $\LC(3^22^1)$ and $\LC(3^3)$, so by \Cref{lemma: paired cube construction} there exists a $\LC((3t)^2(2t+c)^1)$ for all $0\leq c\leq t$ and $a=3t\geq 15$. Using an $\OA(3,5,4)$ and the same realizations, we construct a $\LC(12^211^1)$. A $\LC(9^26^1)$ and $\LC(9^3)$ are inflations of realizations of $(3^22^1)$ and $(3^3)$, and a $\LC(9^27^1)$ and a $\LC(9^28^1)$ are given in \Cref{app: 9-9-7} and \Cref{app: 9-9-8}
\end{proof}

\section{Construction from orthogonal arrays}

The construction in this section pieces together latin cubes that have been extended or permuted and orthogonal arrays are important in creating those pieces. We first define a more intricate form of extension than that used in the previous construction.

\begin{definition}
\label{def: extension by B with S}
    Take integers $a$ and $b$ with $a\geq b$. Let $A$ be a latin cube of order $a$ and let $B$ be a partial latin cube of order $a$ with each symbol of $[b]$ occurring $a^2$ times (and no restriction on the other symbols) and such that the elements of $\{(i,j,k,A(i,j,k),B(i,j,k))^T\mid i,j,k\in[a],B(i,j,k)\text{ is non-empty}\}$ form the columns of a partial $\OA(3,5,a)$.
    Take $S\subseteq [a]$ and let $S_k = \{s+k-1\pmod{a}\mid s\in S\}$ for all $k\in[a]$. Then the \emph{extension of $A$ by $B$ with $S$} is the partial latin cube $A_S$ of order $a+b$ where for $i,j,k\in[a]$,
    $$A_S(i,j,k) = \begin{cases}
        a+B(i,j,k), & \text{if $B(i,j,k)\in[b]$ and $A(i,j,k)\in S_k$,}\\
        A(i,j,k), & \text{otherwise,}
    \end{cases}$$
    and for all $m\in[b]$,
    $$A_S(a+m,j,k) = \begin{cases}
        A(i,j,k), & \text{if $B(i,j,k)=m$ and $A(i,j,k)\in S_k$ for some $i\in[a]$,}
    \end{cases}$$
    $$A_S(i,a+m,k) = \begin{cases}
        A(i,j,k), & \text{if $B(i,j,k)=m$ and $A(i,j,k)\in S_k$ for some $j\in[a]$,}
    \end{cases}$$
    $$A_S(i,j,a+m) = \begin{cases}
        A(i,j,k), & \text{if $B(i,j,k)=m$ and $A(i,j,k)\in S_k$ for some $k\in[a]$.}
    \end{cases}$$

    All other cells are left empty.
\end{definition}

The construction is split into two subsections. The first subsection gives a construction for a $\LC(a^2b^1)$ which relies on the existence of a collection of (partial) latin cubes. The other subsection gives constructions for these substructures in order to complete the cases not considered in the previous section: $a\equiv 1,5\pmod6$.

\subsection{Realization construction}

We begin with some results on completing partial latin cubes. The first relies on a well-known theorem of Ryser \cite{ryser1951combinatorial}, and we provide some background on graph colouring before the second result.

\begin{theorem}[Theorem 2 of \cite{ryser1951combinatorial}]
\label{thm: Ryser}
    Let $L$ be an $r\times s$ latin rectangle with symbols from $[n]$, and let $N(i)$ denote the number of time that symbol $i\in[n]$ occurs in $L$. Then $L$ can be extended to an $n\times n$ latin square if and only if $N(i)\geq r+s-n$ for all $i\in[n]$.
\end{theorem}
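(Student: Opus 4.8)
The plan is to prove both directions, with necessity a short block-counting argument and sufficiency the substantive part, carried out by extending $L$ in two stages: first to a full-width $r\times n$ latin rectangle, and then to an $n\times n$ latin square. For necessity, suppose $L$ occupies the top-left $r\times s$ corner of a completed $n\times n$ latin square $L^\ast$. A fixed symbol $i$ occurs exactly once in each of the $r$ rows that meet $L$, hence $r$ times among those rows of $L^\ast$. The occurrences lying outside $L$ sit in the $r\times(n-s)$ block to the right of $L$, where $i$ appears at most once per column and so at most $n-s$ times. Therefore $N(i)\ge r-(n-s)=r+s-n$, which is the claimed inequality.

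For sufficiency I would first record the classical fact (call it Lemma A) that every $r'\times n$ latin rectangle extends to an $n\times n$ latin square: add one row at a time, observing that after $r'$ rows each column is missing exactly $n-r'$ symbols and each symbol is missing from exactly $n-r'$ columns, so the column--symbol ``missing'' bipartite graph is $(n-r')$-regular and admits a perfect matching by Hall's marriage theorem; that matching is a legal new row, and we iterate. This step needs no hypothesis on $N(i)$.

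The heart of the argument (Lemma B) is to extend the given $r\times s$ rectangle to an $r\times n$ rectangle \emph{one column at a time while preserving the invariant} that after $c$ columns every symbol $i$ occurs at least $r+c-n$ times; note the hypothesis is exactly this invariant for $c=s$, and reaching $c=n$ forces each symbol to occur $r$ times, i.e.\ a full rectangle. To add column $c+1$, I would work in the bipartite graph $H$ whose edges join each row $j$ to the symbols missing from row $j$, and seek a matching saturating every row and every \emph{critical} symbol, where a symbol is critical if it currently sits at the threshold $r+c-n$, equivalently if it is missing from the maximal $n-c$ rows. Each row has degree $n-c$ and each symbol has degree at most $n-c$ (exactly $n-c$ when critical), so a one-line edge count yields Hall's condition separately for the set of rows and for the set of critical symbols. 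Thus $H$ has a matching saturating all rows and, separately, one saturating all critical symbols, and by the Mendelsohn--Dulmage theorem these combine into a single matching saturating both sets at once. Installing those symbols as column $c+1$ advances every critical symbol and leaves non-critical symbols (which already carry slack) above the new threshold, so the invariant is restored and the array stays latin.

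Chaining the two lemmas proves the theorem: by Lemma B the hypothesis $N(i)\ge r+s-n$ lets us reach an $r\times n$ latin rectangle, and by Lemma A we complete it to an $n\times n$ latin square. I expect the only genuine obstacle to be the single-column step of Lemma B, namely justifying that a row-saturating matching and a critical-symbol-saturating matching can be realized \emph{simultaneously}: each individual Hall condition is routine from the degree counts, and it is precisely the appeal to Mendelsohn--Dulmage that forces the invariant strictly forward rather than merely keeping the rectangle legal.
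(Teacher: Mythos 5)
You should first note that the paper contains no proof of this statement at all: it is imported verbatim as Theorem 2 of Ryser's 1951 paper and used purely as a black box (it feeds into \Cref{lemma: right corner channel}), so there is no in-paper argument to compare yours against. Judged on its own, your proof is correct, and it is essentially the classical proof of Ryser's theorem. The necessity count is right: in a completed square the symbol $i$ occurs $r$ times in the first $r$ rows, and at most $n-s$ of those occurrences fit in the $r\times(n-s)$ block, so $N(i)\geq r+s-n$. For sufficiency, the invariant you maintain (after $c$ columns every symbol occurs at least $r+c-n$ times) is exactly what the degree bounds need: under it every symbol has degree at most $n-c$ in the rows-versus-missing-symbols bipartite graph, which is what makes the edge count $|S|(n-c)\leq |N(S)|(n-c)$ deliver Hall's condition for the rows, and the symmetric count (rows have degree exactly $n-c$) delivers it for the critical symbols; note that without the invariant even the row-saturating matching could fail, so the invariant is doing real work in both halves. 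The Mendelsohn--Dulmage theorem then legitimately merges a row-saturating matching with a critical-symbol-saturating matching into a single matching saturating both, and installing it as the new column advances every critical symbol and so restores the invariant; your Lemma A step (the missing-symbol bipartite graph of a full-width rectangle is $(n-r')$-regular, hence has a perfect matching) is standard. The only thing I would flag is that Mendelsohn--Dulmage is a genuine external ingredient: Ryser's own 1951 argument predates it and forces the low-frequency symbols by a direct counting argument, so if you want the proof self-contained you should either prove Mendelsohn--Dulmage (a short alternating-path argument on the union of the two matchings) or replace it by the defect form of Hall's theorem. As written, though, the logic is sound.
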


\begin{lemma}
\label{lemma: right corner channel}
    Let $C$ be an $n\times n\times r$ partial latin box with only the cells $(i,j,k)$ empty for all $i,j\in n-s+[s]$ and $k\in[r]$ where $s\leq n$, and let $V_{k}$ be the set of symbols in the line $(n-s+1,\cdot,k)$. Then $C$ can be completed to an $n\times n\times r$ latin box if 
    \begin{itemize}
    \item for any fixed $k\in[r]$ the set of symbols occurring in the line $(i,\cdot,k)$ is $V_k$ for all choices of $i\in n-s+[s]$ and the set of symbols occurring in the line $(\cdot,j,k)$ is also $V_k$ for all choices of $j\in n-s+[s]$, and
    \item each symbol $\ell\in[n]$ occurs at least $r-s$ times in layer $(n-s+1,\cdot,\cdot)$ of $C$.
    \end{itemize}
\end{lemma}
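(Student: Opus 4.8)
The plan is to reduce the completion to filling only the empty $s\times s\times r$ ``corner tube'' $D$ consisting of the cells $(i,j,k)$ with $i,j\in n-s+[s]$, $k\in[r]$, and then to fill $D$ by a matching argument whose feasibility is exactly what the two hypotheses guarantee. First I would record what each hypothesis forces on $D$. Fix a layer $k$: in the completed box the layer $(\cdot,\cdot,k)$ is an $n\times n$ latin square, and by the first hypothesis every boundary row $(i,\cdot,k)$ and boundary column $(\cdot,j,k)$ with $i,j\in n-s+[s]$ already contains exactly the symbol set $V_k$ (of size $n-s$) off the corner. Hence the $s$ empty cells of each such line must receive exactly the $s$ symbols of $U_k:=[n]\setminus V_k$, so the corner of layer $k$ is forced to be a latin square on the alphabet $U_k$, and the only remaining requirement is that every corner file $(i,j,\cdot)$ (which lies entirely inside $D$) be rainbow. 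I would then translate the second hypothesis: the number of times a symbol $\ell$ occurs in the plane $(n-s+1,\cdot,\cdot)$ equals $|\{k:\ell\in V_k\}|=r-|\{k:\ell\in U_k\}|$, so ``$\ell$ occurs at least $r-s$ times'' is precisely the statement that $\ell$ belongs to at most $s$ of the sets $U_1,\dots,U_r$.

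With the problem reduced to filling an $s\times s\times r$ box so that layer $k$ is a latin square on $U_k$ and all files are rainbow, given that each symbol lies in at most $s$ of the $U_k$, I would build the filling from a single combinatorial object. Form the bipartite graph $\Gamma$ with one part the layers $[r]$ and the other part the symbols, joining $k$ to $\ell$ exactly when $\ell\in U_k$; then every layer-vertex has degree $|U_k|=s$ and, by the translated hypothesis, every symbol-vertex has degree at most $s$, so $\Gamma$ has maximum degree $s$. Consequently $\Gamma$ decomposes into $s$ matchings $N_0,\dots,N_{s-1}$, each saturating all of $[r]$; equivalently we obtain, for each $k$, a bijection $c_k\colon U_k\to\{0,\dots,s-1\}$ where $c_k(\ell)=m$ means the edge $k\ell$ lies in $N_m$. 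The existence of this decomposition is where a completion theorem enters: it follows by repeatedly extracting perfect matchings from a regularised version of $\Gamma$, the same Hall/latin-rectangle mechanism underlying \Cref{thm: Ryser}. I would then define the corner entries by a relabelled cyclic latin square, $M_k(i,j)=c_k^{-1}\big((i+j)\bmod s\big)$ for relative corner coordinates $i,j\in\{0,\dots,s-1\}$, and place $C(n-s+1+i,\,n-s+1+j,\,k)=M_k(i,j)$.

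Finally I would verify the three families of lines. Within a fixed layer $k$, $M_k(i,j)$ depends only on $(i+j)\bmod s$ through the bijection $c_k^{-1}$, so each corner row and column runs over all of $U_k$, making $(\cdot,\cdot,k)$ a genuine latin square. The decisive check is the file $(i,j,\cdot)$: here $m=(i+j)\bmod s$ is constant in $k$, so the file reads $c_k^{-1}(m)$ as $k$ ranges over $[r]$, and these symbols are exactly the symbol-endpoints of the matching $N_m$ — distinct by the definition of a matching. This is precisely where I expect the main difficulty to lie: rows and columns inside each layer are essentially free, but the third-direction (file) constraint couples all $r$ layers at once, and the only thing that makes it satisfiable is the frequency bound of the second hypothesis, repackaged as the degree bound that guarantees the $s$-matching decomposition. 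The remaining routine point, which I would state but not belabour, is that every line of $C$ meeting the corner is completed to the full symbol set $[n]$ because $V_k$ and $U_k$ partition $[n]$, while every line disjoint from the corner was already correct in the given partial box.
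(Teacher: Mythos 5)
Your proof is correct, and its skeleton matches the paper's: both arguments reduce the problem to the $s\times s\times r$ corner, observe that each layer of the corner is forced to be a latin square on $U_k=[n]\setminus V_k$, and fill the corner by a cyclic pattern so that the only genuine coupling between layers --- the file constraint --- is carried by a single $r\times s$ template. The difference is the key lemma used to produce that template. The paper completes the single layer $(n-s+1,\cdot,\cdot)$, viewed as an $r\times n$ latin rectangle whose first $n-s$ columns are filled, by a direct application of Ryser's theorem (\Cref{thm: Ryser}); the frequency hypothesis is literally Ryser's condition $N(\ell)\geq r+(n-s)-n=r-s$, and the remaining corner rows of each layer are then obtained by shifting the completed row cyclically, exactly as in your $M_k(i,j)=c_k^{-1}\big((i+j)\bmod s\big)$. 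You instead re-encode the same object as a proper $s$-edge-colouring (matching decomposition) of the layer--symbol bipartite graph, obtained K\"onig-style by regularising and extracting perfect matchings, with the frequency hypothesis re-read as the degree bound on symbol vertices; your matchings $N_m$ are precisely the columns of the paper's completed rectangle, so the two templates are equivalent data. What each buys: the paper's route is shorter because Ryser's theorem is already quoted (and used elsewhere in the paper), while yours is self-contained modulo Hall's theorem and arguably makes more transparent why the file constraint is the only obstruction; the two are equal in strength, so neither is more general here.
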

\begin{proof}
    If a $k$ is fixed, then the empty cells in the layer $(\cdot,\cdot,k)$ can all take any entry from $[n]\setminus V_k$. Thus, the empty section forms a subsquare on $[n]\setminus V_k$, and if the first row is formed, then the remaining rows are constructed by shifting all symbols in the previous row one cell to the right (with the last entry looping around). Thus, if the layer $(n-s+1,\cdot,\cdot)$ is completed, then the entire latin box is completed.

    The layer $(n-s+1,\cdot,\cdot)$ is an $r\times n$ latin rectangle with only the first $r\times (n-s)$ array filled. By Ryser's \Cref{thm: Ryser} and our stated assumption, this latin rectangle can be extended to an order $n$ latin square. By taking the first $r$ rows of this latin square, we complete the layer and thus the latin box.
\end{proof}

Let $G$ be a graph with vertex set $V$ and edge set $E$, where $E$ may contain repeated edges but no loops.

An \emph{edge-colouring} of $G$ with $k$ colours is a partition of $E$ into mutually disjoint sets $C_1,\dots, C_k$, where edge $e\in E$ has colour $i$ if $e\in C_i$. For $v\in V$ and $i\in[k]$, let $c_i(v)$ be the number of edges adjacent to $v$ in $C_i$. An edge-colouring of $G$ is \emph{equitable} if for all $v\in V$ and all $i,j\in[k]$,
$$|c_i(v)-c_j(v)|\leq 1.$$

The following theorem is from De Werra \cite{de1971balanced}.

\begin{theorem}[Theorem 1 of \cite{de1971balanced}]
\label{thm: de werra}
    For each $k\geq 1$, any finite bipartite graph has an equitable edge-colouring with $k$ colours.
\end{theorem}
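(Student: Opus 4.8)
The plan is to reduce the equitability condition to a pairwise balance condition and then to enforce that condition by a recolouring argument driven by a monovariant. Note that a $k$-edge-colouring is equitable exactly when, for every vertex $v$ and \emph{every pair} of colours $i,j$, the adjacent edge-counts satisfy $|c_i(v)-c_j(v)|\le 1$. So it is enough, starting from any $k$-colouring, to repeatedly eliminate a vertex $v$ at which some pair $i,j$ has $c_i(v)-c_j(v)\ge 2$, without creating new imbalances overall.

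The heart of the argument is a two-colour balancing lemma, and this is the step where bipartiteness is indispensable: every finite bipartite multigraph $H$ admits a $2$-edge-colouring in which, at each vertex, the two colour-degrees differ by at most $1$. To see this, let $X,Y$ be the parts of $H$; the numbers of odd-degree vertices in $X$ and in $Y$ are both congruent to $|E(H)|$ modulo $2$. Adjoin a vertex $y_0$ to $Y$ joined to every odd-degree vertex of $X$, and a vertex $x_0$ to $X$ joined to every odd-degree vertex of $Y$; if $x_0,y_0$ then have odd degree (which happens simultaneously) add the edge $x_0y_0$. The resulting multigraph $H''$ is still bipartite and now has all even degrees, so each component has an Euler circuit, and since closed walks in a bipartite graph have even length, each such circuit has even length. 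Colour the edges alternately with the two colours around each circuit: every vertex of $H''$ is entered and left on consecutive circuit-edges at each visit, so it receives exactly equal numbers of the two colours. Deleting the at most one auxiliary edge incident to each original vertex changes its colour-degrees by at most one, leaving every original vertex balanced to within $1$, as claimed.

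To combine these local swaps into a global colouring I would use the monovariant
\[
\Phi \;=\; \sum_{v}\sum_{i=1}^{k} c_i(v)^2 .
\]
If the current colouring is not equitable, pick $v,i,j$ with $c_i(v)-c_j(v)\ge 2$ and apply the balancing lemma to the sub-multigraph $H=C_i\cup C_j$ (bipartite as a subgraph of the given graph), recolouring its edges with colours $i$ and $j$ only. This leaves $c_\ell(u)$ unchanged for $\ell\ne i,j$ and preserves $c_i(u)+c_j(u)$ at every $u$, while forcing $|c_i(u)-c_j(u)|\le 1$ everywhere. Since $x^2+y^2$ is minimised, for fixed $x+y$, exactly when $|x-y|\le 1$, no vertex increases its contribution to $\Phi$ while the target vertex $v$ strictly decreases it; hence $\Phi$ drops by a positive integer. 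As $\Phi$ is a nonnegative integer it can drop only finitely often, so the process halts, and at that point the colouring is equitable.

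The main obstacle is the balancing lemma rather than the bookkeeping: bipartiteness cannot be dropped, since an odd cycle such as a triangle has no $2$-colouring with colour-degrees differing by at most $1$ at every vertex, so the argument must genuinely exploit the even length of Euler circuits in bipartite graphs. The remaining care is routine—verifying the parity conditions so that $H''$ can be made Eulerian while staying bipartite, and checking that the swap strictly decreases $\Phi$ at $v$ without increasing it elsewhere.
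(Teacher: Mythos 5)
The paper does not actually prove this statement: it is quoted verbatim, with citation, as Theorem~1 of De Werra's 1971 paper, so there is no internal proof to compare against. Your argument is a correct, self-contained proof of the result, and it is essentially the classical Euler-circuit argument. The two-colour balancing lemma is sound: the parity bookkeeping is right (the numbers of odd-degree vertices in $X$ and in $Y$ are each congruent to $|E(H)|$ modulo $2$, so the auxiliary vertices $x_0,y_0$ end up odd simultaneously and the extra edge $x_0y_0$ restores all-even degrees while keeping $H''$ bipartite); closed walks in a bipartite multigraph have even length, so the alternating colouring of each Euler circuit is consistent and gives exactly equal colour-degrees at every vertex of $H''$; and each original vertex is incident to at most one auxiliary edge, so after deletion it is balanced to within $1$. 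The monovariant step is also correct: recolouring $C_i\cup C_j$ preserves $c_i(u)+c_j(u)$ and all other colour counts at every vertex $u$, and since for fixed $x+y$ the quantity $x^2+y^2$ is minimised precisely by the (unique up to order) pair with $|x-y|\le 1$, the swap never increases $\Phi$ and strictly decreases it at the offending vertex $v$; as $\Phi$ is a nonnegative integer, the process terminates, and the stopping condition is exactly equitability. Your remark that bipartiteness is indispensable (an odd cycle admits no balanced $2$-colouring) correctly isolates where the hypothesis enters. The only thing this costs relative to the citation is length; what it buys is a complete and elementary proof that could stand in the paper if self-containment were desired.
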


\begin{lemma}
\label{lemma: filling back entries}
    Let $C$ be an $r\times r\times n$ partial latin box with symbols from $[n]$ and take $U \subseteq [r]\times[r]$ such that for any $i\in[r]$, $|\{j\mid (i,j)\in U\}| = |\{j\mid (j,i)\in U\}| = n-r$. Let $V$ be a subset of $[n]$ such that $|V| = n-r$. If for all $(i,j)\in U$ the cells $(i,j,k)$ contain symbols from $[n]\setminus V$ when $k\leq r$ and are empty when $k> r$, and all other cells are filled, with no cell of $([r],[r],r+[n-r])$ containing symbols from $V$, then $C$ can be completed to an $r\times r\times n$ latin box.
\end{lemma}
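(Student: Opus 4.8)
The plan is to first determine exactly which cells are empty and which symbols are forced into them, then recast the completion as an edge-colouring problem on a regular bipartite graph and solve it using \Cref{thm: de werra}. Begin by fixing $(i,j)\in U$ and examining the file $(i,j,\cdot)$: the $r$ cells $(i,j,k)$ with $k\in[r]$ are filled with distinct symbols all lying in $[n]\setminus V$, and since $|[n]\setminus V|=r$ these cells must contain \emph{every} symbol of $[n]\setminus V$. As this file must become a permutation of $[n]$, the $n-r$ empty cells $(i,j,r+1),\dots,(i,j,n)$ are forced to receive a permutation of $V$. Hence completing $C$ amounts to assigning symbols of $V$ to the cells $\{(i,j,k):(i,j)\in U,\ k\in r+[n-r]\}$ so that (a) each such file is a permutation of $V$, and (b) in each back layer $k\in r+[n-r]$ the entries at the $U$-cells of each row and of each column are a permutation of $V$. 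Conditions (a)--(b) are also \emph{sufficient}: the front layers $k\le r$ are already complete and valid, the files with $(i,j)\notin U$ are untouched, and by hypothesis every old back entry lies in $[n]\setminus V$, so it can never clash with a newly inserted symbol of $V$.

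Next I would model $U$ as a bipartite graph $G$ whose two vertex classes index the rows and the columns in $[r]$, with an edge $ij$ for each $(i,j)\in U$. The degree condition on $U$ makes $G$ exactly $(n-r)$-regular. Applying \Cref{thm: de werra} with $k=n-r$ colours gives an equitable edge-colouring; because $G$ is $(n-r)$-regular and the number of colours equals the degree, equitability forces $c_i(v)=1$ for every vertex $v$ and every colour $i$, so the colouring is in fact a \emph{proper} edge-colouring $\chi\colon U\to V$ after fixing a bijection $V\leftrightarrow\mathbb{Z}/(n-r)$. This single colouring already satisfies condition (b) for one layer, since at each row- and column-vertex the incident edges receive all $n-r$ distinct symbols of $V$.

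Finally, to satisfy (a) and (b) simultaneously across all $n-r$ back layers, I would cyclically shift $\chi$: fill cell $(i,j,r+m)$ with the symbol $\chi(i,j)+(m-1)\pmod{n-r}$ for $(i,j)\in U$ and $m\in[n-r]$. For a fixed layer the shift only relabels the colours of a proper colouring, so each row and column again receives every symbol of $V$ exactly once, giving (b); for a fixed edge $(i,j)$ the symbols $\chi(i,j),\chi(i,j)+1,\dots,\chi(i,j)+(n-r-1)$ exhaust $V$, giving (a). Filling accordingly yields the desired latin box. The main obstacle is reconciling the per-layer row/column constraints with the cross-layer file constraints at the same time; the cyclic shift of one proper edge-colouring is precisely the device that handles both, and the only point needing care is the verification that an equitable $(n-r)$-colouring of an $(n-r)$-regular bipartite graph is automatically proper.
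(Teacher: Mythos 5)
Your proposal is correct and follows essentially the same route as the paper's proof: both reduce the problem to filling the cells $(i,j,k)$ with $(i,j)\in U$ and $k\in r+[n-r]$ using symbols of $V$, both apply \Cref{thm: de werra} to the $(n-r)$-regular bipartite graph on rows and columns to fill the first back layer (noting that equitability at degree $n-r$ with $n-r$ colours forces a proper colouring), and both complete the remaining layers by cyclically shifting the symbols of $V$ along each file. Your write-up is somewhat more explicit than the paper's about why the empty cells are forced to receive exactly the symbols of $V$ and why the shifted layers remain valid, but the underlying argument is identical.
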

\begin{proof}
    The only cells to fill are $(i,j,k)$ where $(i,j)\in U$ and $k\in r+[n-r]$, and these cells need symbols from $V$. Further, the symbols from $V$ do not appear in any row or column of the layer $(\cdot,\cdot,k)$ for all $k\in r+[n-r]$. If the symbols in $V$ are ordered and layer $(\cdot,\cdot,r+1)$ is filled, then each line $(i,j,\cdot)$ for $(i,j)\in U$ is completed by placing the symbol that is the next in the ordering after the symbol in the layer before. Thus, we must only fill the layer $(\cdot,\cdot,r+1)$.

    Construct a bipartite graph $G$ between $[r]$ and $[r]$, representing the rows and columns of the layer, where there is an edge between $i$ and $j$ when $(i,j)\in U$. Thus, each vertex has degree $n-r$ and so there exists an equitable $(n-r)$-colouring of the edges. Assign a symbol of $V$ to each colour and thus fill the layer by placing symbol $\ell$ in cell $(i,j)$ if the edge between $i$ and $j$ is coloured with the colour that $\ell$ is assigned to. Since each vertex has degree $n-r$ and the colouring is balanced, there will be one edge of each colour adjacent to any vertex, and so each row and column of the layer will have one copy of each symbol in $V$. Therefore, the layer, and thus the latin rectangle, are complete.
\end{proof}

The last object we need for our construction is a permutation of an extension with some additional conditions on where the filled cells are.

\begin{definition}
    For some $A$, $B$ and $S$ as defined in \Cref{def: extension by B with S}, given the extension $A_S$ of order $a+b$, a partial latin cube $A'_S$ is a \emph{shifted partner} of $A_S$ when
    \begin{itemize}
        \item $A'_S$ is a copy of $A_S$ with some permutation applied to the layers $(i,[n],[n])$ and (possibly a different permutation) to $([n],[n],i)$ for $i\in[a]$,
        \item for all $i,k\in[a]$ and $m\in a+[b]$, $A_S(i,m,k)$ is non-empty if and only if $A'_S(i,m,k)$ is non-empty,
        \item for all $i,j\in[a]$ and $m\in a+[b]$, if $A_S(i,j,m)$ is non-empty then $A'_S(i,j,m)$ is empty,
        \item for all $i,k\in[a]$ and $m\in a+[b]$, $A'_S(i,m,k)$ does not contain an element of $S_k$.
    \end{itemize}
\end{definition}

\begin{lemma}
\label{lemma: odd a construction}
    There exists a $\LC(a^2b^1)$ if there exists $A$ and $B$ as defined in \Cref{def: extension by B with S} such that
    \begin{enumerate}[(1)]
        \item there exists a partial latin cube $A'_{[a-b]}$ that is a shifted partner of $A_{[a-b]}$,
        \item there exists a completion $A_{[a-b]}^*$ of $A_{[a-b]}$, and
        \item for all $i,j\in[a]$, $A_{[a-b]}(i,j,a+1)\in[a]$ if and only if $A_{[a-b]}(i,j,m)\in[a]$ for all $m\in\{a+2,\dots,a+b\}$.
    \end{enumerate}
\end{lemma}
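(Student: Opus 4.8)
The plan is to assemble the required cube $C$ of order $2a+b$ out of the three objects guaranteed by the hypotheses, placing them on overlapping coordinate regions and then completing the remaining cells with the machinery of \Cref{lemma: right corner channel} and \Cref{lemma: filling back entries}. Split each coordinate axis of $C$ into a block $P$ of size $a+b$ and a block $Q$ of size $a$; this is the coarse scaffolding, with the order-$(a+b)$ region $P^3$ intended to carry the first order-$a$ subcube together with the order-$b$ subcube, and $Q^3$ carrying the second order-$a$ subcube. The completion $A_{[a-b]}^*$ from hypothesis (2), a full latin cube of order $a+b$, is installed on $P^3$ after identifying its axis $[a]\cup(a+[b])$ and its symbol set so that the ``core'' part $[a]$ supplies the first order-$a$ subcube and the extension part $a+[b]$ supplies the order-$b$ subcube; thus the prolongation built into \Cref{def: extension by B with S} is exactly what separates these two nested subcubes.

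First I would fill the two diagonal regions: $P^3$ from $A_{[a-b]}^*$ as above, and $Q^3$ from a relabelled copy of $A$ on a fresh block of $a$ symbols disjoint from those used in $P^3$. The six remaining ``cross'' sub-boxes of $C$ --- those with coordinates in both $P$ and $Q$ --- are where the shifted partner $A'_{[a-b]}$ and the completion lemmas enter. The shifted-partner conditions are designed precisely for this interface: the matching of supports along the second extension direction ($A_{[a-b]}(i,m,k)$ non-empty iff $A'_{[a-b]}(i,m,k)$ non-empty) and the complementary supports along the third ($A_{[a-b]}(i,j,m)$ non-empty forces $A'_{[a-b]}(i,j,m)$ empty), for $m\in a+[b]$, guarantee that the forced entries inherited from $P^3$ and from $Q^3$ can be reconciled in each cross box without collisions, and that the entries $A'_{[a-b]}(i,m,k)$ avoid the symbols $S_k$ exactly where those symbols have already been used.

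With the forced cells thus arranged, each cross box becomes a partial latin box of precisely the shape treated earlier: the layers through the small block of size $b$ reduce to a ``channel'' completion handled by \Cref{lemma: right corner channel} (hence by Ryser's \Cref{thm: Ryser}), while the back-entries that must be distributed across the $b$ extension layers are placed by the equitable edge-colouring of \Cref{lemma: filling back entries} (hence by \Cref{thm: de werra}). Hypothesis (3) is what makes both lemmas applicable uniformly: for each $(i,j)$ it forces the cells $(i,j,a+1),\dots,(i,j,a+b)$ of $A_{[a-b]}$ to be simultaneously filled (with core symbols) or simultaneously empty, so the support set $U$ and the forbidden-symbol sets $V_k$ required by those lemmas are the same across all $b$ layers, and the counting and degree hypotheses hold in lockstep. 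Without this uniformity the distinct layers of the small block would impose incompatible completions.

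Finally I would verify that $C$ is a latin cube and that its three subcubes are pairwise disjoint. Latinness is checked one line at a time: a line lying inside a single region is latin because the object placed there ($A_{[a-b]}^*$, the copy of $A$, or a completed cross box) is latin, and a line crossing a region boundary is latin because the symbol sets attached to $P$- and $Q$-positions are disjoint and the supports were matched at the interface. Disjointness of the three subcubes follows from the block structure together with the disjointness of their symbol sets. I expect the main obstacle to be the third paragraph: verifying that the hypotheses of \Cref{lemma: right corner channel} and \Cref{lemma: filling back entries} genuinely hold in every cross box simultaneously for all $b$ layers of the small block, since this is where conditions (1)--(3) must all be used at once and where essentially all of the bookkeeping lives.
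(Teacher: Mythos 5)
Your construction fails at the very first step, before any of the completion machinery is invoked. You place the completion $A^*_{[a-b]}$ on the block $P^3$ of order $a+b$ and claim that its core $[a]^3$ supplies the first order-$a$ subcube while the extension part supplies the order-$b$ subcube. But by \Cref{def: extension by B with S}, a core cell $(i,j,k)$ of the extension contains the symbol $a+B(i,j,k)\in a+[b]$ whenever $B(i,j,k)\in[b]$ and $A(i,j,k)\in S_k$ --- this mixing is the entire point of the prolongation --- so the core of $A_{[a-b]}$ (and hence of $A^*_{[a-b]}$, which agrees with it on all filled cells) is not a latin cube on $[a]$, nor is it symbol-disjoint from the $(a+[b])^3$ corner. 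Worse, no repair within your scaffolding is possible: by the paper's own \Cref{lemma: cube with subcube}, a latin cube of order $a+b$ contains a subcube of order $a$ only if $b\geq a$, whereas this lemma is applied with $b<a$. Since your $P^3$ is filled by the latin cube $A^*_{[a-b]}$, no order-$a$ subcube can sit inside $P^3$ at all; so however the six cross boxes are completed, the result cannot be a $\LC(a^2b^1)$.

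The paper avoids exactly this trap: it splits each axis into \emph{three} blocks of sizes $a$, $a$, $b$, fills the three diagonal blocks with arbitrary latin cubes on the disjoint symbol sets $[a]$, $a+[a]$, $2a+[b]$ (these are the subcubes), and places the extension objects only off the diagonal. It also needs \emph{both} extensions, $A_S$ with $S=[a-b]$ and $A_T$ with $T=a-b+[b]$ --- their supports partition the extension cells of $A_{[a]}$, which is what lets whole off-diagonal blocks be tiled --- together with relabelled copies $C_S$, $D_S$, $C_T$, $D_T$, $C'_S$, $D'_S$, $C^*_S$, $D^*_S$ on shifted symbol sets, whereas your proposal uses only the $S$-objects. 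The Ryser and De Werra lemmas then complete the residual regions, roughly as you envisage, but only after this three-block placement has made the subcube and symbol-disjointness requirements structurally automatic.
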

\begin{proof}
    Let $L$ be an order $2a+b$ array, and for $i,j,k\in[3]$ let $L_{i,j,k}$ be subarrays of $L$, where $$L_{i,j,k} = \{L(x,y,z)\mid x\in f(i),y\in f(j),z\in f(k)\}$$ for $f(1) = [a]$, $f(2) = a+[a]$ and $f(3) = 2a+[b]$.

    Let $S = [a-b]$ and $T = a-b+[b]$. Construct the extensions $A_S$ and $A_T$ of $A$ by $B$ with $S$ and $T$ respectively, and suppose that the shifted partner $A'_S$ and completion $A^*_S$ exist. Apply the same permutations of layers used for $A'_S$ to obtain $A'^*_S$.

    Note that $A_S$ uses the symbols of $[a+b]$, with only the symbols of $[a]$ occurring in the layers of $a+[b]$. Let $C_S$ be a copy of $A_S$ with the symbol $a+s$ for $s\in[b]$ replaced with $2a+s$. This swaps all occurrences of symbols in $a+[b]$ with symbols in $2a+[b]$, leaving the symbols in $a+[a]$ free. Likewise construct $C'_S$, $C^*_S$ and $C_T$.

    Let $D_S$, $D'_S$, $D^*_S$ and $D_T$ be copies of $C_S$, $C'_S$, $C^*_S$ and $C_T$ respectively, where each symbol $i\in[a]$ is replaced with $a+i\in a+[a]$.

    Place copies of the partial latin cubes as shown in \Cref{fig: odd construction structure}, where each array is placed across multiple subarrays. For example, the partial latin cube $C_T$ of order $a+b$ is placed across $L_{2,2,1}$, $L_{2,3,1}$, $L_{3,2,1}$ and $L_{2,2,3}$. Note that $[a]$, $a+[a]$ and $2a+[b]$ indicate the subarrays that are subcubes and are filled with any latin cube on the given symbol set.

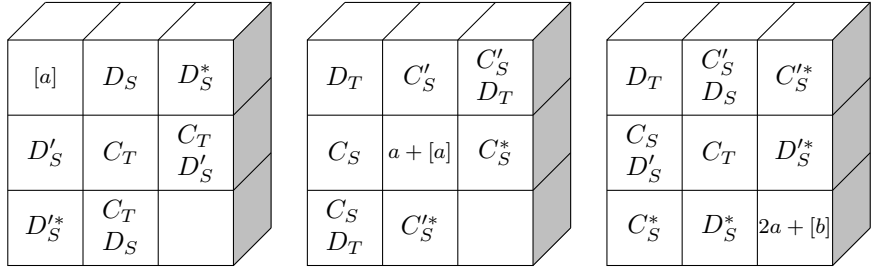
\begin{figure}[h]
    \centering
    \begin{tikzpicture}
        \filldraw[color=lightgray] (3,0) -- (3.5,0.5) -- (3.5,3.5) -- (3,3);
        \draw (0,0) rectangle (3,3);

        \foreach \x in {1,2}{
            \draw (0,\x) -- (3,\x);
            \draw (\x,0) -- (\x,3);}

        \foreach \x [count=\i from 0] in {{\footnotesize{$[a]$},$D_S$,$D^*_S$},{$D'_S$,$C_T$,\thead{$C_T$\\$D'_S$}},{$D'^*_S$,\thead{$C_T$\\$D_S$},}}{
            \foreach \y [count=\j from 0] in \x
                \node at (0.5+ 1*\j,2.5- 1*\i) {\y};
            }

        \foreach \x in {0,1,2,3}
            \draw (\x,3) -- (\x+0.5,3.5);

        \foreach \x in {0,1,2}
            \draw (3,\x) -- (3.5,\x+0.5);

        \draw (3.5,0.5) -- (3.5,3.5);
        \draw (0.5,3.5) -- (3.5,3.5);
    \end{tikzpicture}
    \quad
    \begin{tikzpicture}
        \filldraw[color=lightgray] (3,0) -- (3.5,0.5) -- (3.5,3.5) -- (3,3);
        \draw (0,0) rectangle (3,3);

        \foreach \x in {1,2}{
            \draw (0,\x) -- (3,\x);
            \draw (\x,0) -- (\x,3);}

        \foreach \x [count=\i from 0] in {{$D_T$,$C'_S$,\thead{$C'_S$\\$D_T$}},{$C_S$,\footnotesize{$a+[a]$},$C^*_S$},{\thead{$C_S$\\$D_T$},$C'^*_S$,}}{
            \foreach \y [count=\j from 0] in \x
                \node at (0.5+ 1*\j,2.5- 1*\i) {\y};
            }

        \foreach \x in {0,1,2,3}
            \draw (\x,3) -- (\x+0.5,3.5);

        \foreach \x in {0,1,2}
            \draw (3,\x) -- (3.5,\x+0.5);

        \draw (3.5,0.5) -- (3.5,3.5);
        \draw (0.5,3.5) -- (3.5,3.5);
    \end{tikzpicture}
    \quad
    \begin{tikzpicture}
        \filldraw[color=lightgray] (3,0) -- (3.5,0.5) -- (3.5,3.5) -- (3,3);
        \draw (0,0) rectangle (3,3);

        \foreach \x in {1,2}{
            \draw (0,\x) -- (3,\x);
            \draw (\x,0) -- (\x,3);}

        \foreach \x [count=\i from 0] in {{$D_T$,\thead{$C'_S$\\$D_S$},$C'^*_S$},{\thead{$C_S$\\$D'_S$},$C_T$,$D'^*_S$},{$C^*_S$,$D^*_S$,\footnotesize{$2a+[b]$}}}{
            \foreach \y [count=\j from 0] in \x
                \node at (0.5+ 1*\j,2.5- 1*\i) {\y};
            }

        \foreach \x in {0,1,2,3}
            \draw (\x,3) -- (\x+0.5,3.5);

        \foreach \x in {0,1,2}
            \draw (3,\x) -- (3.5,\x+0.5);

        \draw (3.5,0.5) -- (3.5,3.5);
        \draw (0.5,3.5) -- (3.5,3.5);

        \node at (0.5,0.5) {\phantom{\thead{$C_S$\\$D_T$}}};
    \end{tikzpicture}

    \caption{The subarray structure of $L$}
    \label{fig: odd construction structure}
\end{figure}

    In an order $a+b$ latin cube ($A_S$, $C_S$, $D_S$, etc.), let the $i$-extension, $j$-extension and $k$-extension refer to the cells in $(a+[b],[a],[a])$, $([a],a+[b],[a])$ and $([a],[a],a+[b])$ respectively. Similarly, let the $ij$-, $ik$- and $jk$-extensions refer to the cells in $(a+[b],a+[b],[a])$, $(a+[b],[a],a+[b])$ and $([a],a+[b],a+[b])$ respectively. Also, let the centre refer to the cells in $([a],[a],[a])$.

    We first show that the entries of $L$ so far form a partial latin cube. Using the fact that the entries in the $k$-extensions of $A_S$ and $A'_S$ are in disjoint cells, it is sufficient to establish the following:
    \begin{enumerate}[(i)]
        \item If the $i$-, $j$- or $k$-extension of $C_S$, $C'_S$ or $C_T$ and the same extension of $D_S$, $D'_S$ or $D_T$ do not have entries in the same cells, then the centres and that extension of the relevant arrays can be placed in the same columns, rows or files respectively.
        \item The $i$-, $j$- and $k$-extensions of $A_S$ and $A_T$ can be placed in the same rows, columns and files and together fill the subarray. The same is true for the $j$-extensions of $A'_S$ and $A_T$.
        \item The $k$-extensions of $A'_S$ and $A_T$ can be placed in the same columns, and the $i$-extensions can be placed in the same files.
        \item The $i$- and $j$-extensions of $A_S$ and $A'_S$ can be placed in the same rows and columns respectively.
        \item The $i$-, $j$- and $k$-extensions of $A_S$ contain only symbols from $[a]$ and are completed to $A^*_S$ with only symbols from $a+[b]$.
        \item The $ij$-, $ik$- and $jk$-extensions of $C^*_S$ and $D^*_S$ contain only symbols from $[a]$ and $a+[a]$ respectively.
    \end{enumerate}

    Consider the extensions $A_X$ and $A_Y$ of $A$ by $B$ with $X\subseteq[a]$ and $Y\subseteq[a]$ respectively. Suppose that the entries in the $i$-extensions are in disjoint sets of cells. For any column $(\cdot,j,k)$ where $j,k\in[a]$, if symbol $a+m$ for $m\in[b]$ occurs in this column of $A_X$ then it must occur within the centre and $A_X(a+m,j,k)$ is non-empty. Similarly, if $a+m$ occurs within the same column of $A_Y$, then $A_Y(a+m,j,k)$, which is in the $i$-extension, must be non-empty. This contradicts the assumption that the entries in the $i$-extensions are in disjoint cells, and so no symbol of $a+[b]$ occurs within the same column of both $A_X$ and $A_Y$.

    Taking $C_X$ and $D_Y$, these partial latin cubes share only the symbols of $2a+[b]$ which correspond to the symbols of $a+[b]$ in $A_X$ and $A_Y$. Thus, since those symbols do not occur within the same column of both centres and the $i$-extensions are in disjoint sets of cells, the $i$-extensions can be placed in the same subarray and the centres can be placed in the two subarrays above them (placing them in the same columns).

    Observe that since the layers $(i,\cdot,\cdot)$ and $(\cdot,\cdot,i)$ are only permuted for $i\in[a]$ to get $A'_S$, it follows that $A'_S$ has the same property as $A_S$: if $a+m$ occurs in column $(\cdot,j,k)$ of $A'_S$ then $A'_S(a+m,j,k)$ is non-empty. Thus, the same arguments hold for $C'_S$ and $D'_S$.

    A similar proof is used to show that the $j$- and $k$-extensions can be placed in the same rows and files respectively. Therefore, (i) is shown.

    \smallskip

    Consider the extension $A_{[a]}$ of $A$ by $B$ with $[a]$. Observe that $[a]_k = [a] = S_k\sqcup T_k$ for all $k\in[a]$, so if $A_S(a+m,j,k) = s$ or $A_T(a+m,j,k) = s$ for some $m\in[b]$ and $j,k\in[a]$, then $A_{[a]}(a+m,j,k) = s$ also. The same is true for $A_{[a]}(i,a+m,k)$ and $A_{[a]}(i,j,a+m)$ for $m\in[b]$ and $i,j,k\in[a]$. Thus, the $i$-, $j$- and $k$-extensions of $A_S$ and $A_T$ are a partition of the entries in the same sections of $A_{[a]}$.

    Therefore, if the extensions of $A_S$ and $A_T$ are placed in the same subarray, then they do not overlap and instead fill the entire array as a partial latin cube. They can also be placed in adjacent subarrays with no symbol repeated in a line. Also, since $A'_S$ has the same non-empty cells as $A_S$ in the $j$-extension, that section of $A'_S$ can also be placed with the same section of $A_T$. Thus, (ii) is satisfied.

    \smallskip

    In the shifted partner $A'_S$, the $k$-extension is a copy of the same section of $A_S$ with some permutation applied to the layers $(i,\cdot,\cdot)$. Thus, the symbols in any column of the $k$-extensions of $A'_S$ and $A_S$ are the same. As established earlier, the $k$-extensions of $A_S$ and $A_T$ can be placed in the same columns, meaning that they share no symbols in a column. Therefore, the same is true for the $k$-extensions of $A'_S$ and $A_T$.

    A similar argument shows that the $i$-extensions of $A'_S$ and $A_T$ can be placed in the same files. Thus, (iii) holds.

    \smallskip

    The layer $(\cdot,\cdot,k)$ for $k\in[a]$ of the $i$- and $j$-extensions of $A_S$ contains only symbols of $S_k$. By definition of a shifted partner, the $j$-extension of $A'_S$ contains no symbols from $S_k$. Since each layer must be a permuted version of a layer in $A_S$, and in each layer $(\cdot,\cdot,k)$ of $A_S$ the $i$- and $j$-extensions have the same symbols, the $i$-extension of $A'_S$ also contains no symbols from $S_k$. Thus, the $i$- and $j$-extensions can be placed in the same rows and columns, respectively, of $L$, proving (iv).

    \smallskip

    Observe that the cells in the centre of $A^*_S$ have a combination of symbols from $[a]$ and $a+[b]$. By definition of the extension $A_S$, for any $i,j,k\in[a]$, if $A(i,j,k) = s$ then either $A_S(i,j,k)=s$ or $A_S(i,j,k)=m$ for some $m\in a+[b]$ and $A_S(i,j,a+m) = A(i,a+m,k) = A(a+m,j,k) = s$. In either case, every symbol $s\in[a]$ still occurs once in every line $(i,j,\cdot)$ and $(i,\cdot,k)$ and $(\cdot,j,k)$ for all $i,j,k\in[a]$. Therefore, the empty cells in these lines of $A_S$ are filled with symbols from $a+[b]$ in $A^*_S$.

    For any $k\in[a]$, the filled cells in layer $(\cdot,\cdot,k)$ of the $i$- and $j$-extensions of $A_S$ contain only the symbols of $S_k$. Thus, there are at most $b(a-b)$ filled cells. The empty cells are then filled with symbols from $a+[b]$ to get $A^*_S$, and there are at most $b^2$ such entries. Since there are $ab$ cells in each section, there must be exactly $b(a-b)$ symbols from $S_k$ and exactly $b^2$ from $a+[b]$ ($b$ of each symbol from $S_k\cup (a+[b])$). Therefore, the cells in the same layer of the $ij$-extension must be filled with symbols from $T_k$.

    It follows that all entries in the $ij$-extension of $A^*_S$ are in $[a]$. Note that the entries in the $i$-, $j$- and $k$-extensions are forced by the entries in the centre, and so all three regions contain the same symbols (only in different arrangements). Thus, the $ik$- and $jk$-extensions will also only have symbols from $[a]$. Considering the changed symbol sets, the entries in these extensions of $C^*_S$ and $C'^*_S$ are in $[a]$, and those of $D^*_S$ and $D'^*_S$ are in $a+[a]$. Therefore, (v) and (vi) are true.  

    \smallskip

    Thus, the entries so far form a partial latin cube. We now fill the remaining cells.
    
    Observe that for $k\in[a]$ the cells in $(a+[b],[a],k)$ of the completed latin cube $C^*_S$ contain the symbols of $S_k$ and each symbol of $2a+[b]$ occurs in every line $(i,[a],k)$. It follows that in layer $(\cdot,\cdot,k)$ for $k\in[a]$ of $L$, the entries in any line $(i,\cdot,k)$, where $i\in 2a+[b]$, are those in the set $T_k\cup \{a+\ell\mid \ell\in S_k\}\cup \{a+\ell\mid \ell\in S'_k\}\cup (2a+[b])$ which is a set of $2a$ symbols. The symbols in the lines $(\cdot,j,k)$ for $j\in 2a+[b]$ are from the same set, which we denote $U_k$. The same is true for the layers $(\cdot,\cdot,k)$ of $L$ where $k\in a+[a]$, with the $2a$ symbols of $U_k$ being those of $(2a+[b])\cup \{a+\ell\pmod{2a}\mid\ell\in U_{k-a}\cap [2a]\}$.
    
    The symbols of $2a+[b]$ each occur in every set $U_k$, so appear $2a$ times in the layer $(2a+1,\cdot,[2a])$. The symbols of $[a]$ each occur $b$ times across $U_k$ for $k\in[a]$ and $2(a-b)$ times across $U_k$ for $k\in a+[a]$. The same is true for the symbols of $a+[a]$ but across the opposite ranges of $k$. Thus, every symbol occurs at least $2a-b$ times in the layer, and so by \Cref{lemma: right corner channel} the $(2a+b)\times (2a+b)\times (2a)$ partial latin cube in the cells $(\cdot,\cdot,[2a])$ is completed to fill $L_{3,3,1}$ and $L_{3,3,2}$. Note that $L_{3,3,1}$ and $L_{3,3,2}$ contain only symbols from $[2a]$.
    
    For the partially-filled subarrays, recall that by the Lemma condition (3) $C_S(i,j,a+1)$ is empty for $i,j\in[a]$ if and only if $C_S(i,j,m)$ is also empty for any $m\in a+1+[b-1]$. This will also be true for $C'_S$ and $C_T$. Thus, each line $(i,j,\cdot)$ of $L$ for $i,j\in[2a]$ is either complete or has the last $b$ cells empty and requiring the symbols of $2a+[b]$. Further, there are $a-b+(a-2(a-b)) = b$ incomplete lines $(i,j,\cdot)$ for each $i\in[2a]$ and each $j\in[2a]$, and so each symbol of $2a+[b]$ needs to occur once in every row and column of layer $(\cdot,\cdot,k)$ for $k\in2a+[b]$. Let $U = \{(i,j)\in[2a]\times[2a]\mid \text{cell $(i,j,2a+1)$ is empty}\}$. Therefore, for all $(i,j)\in U$, $L(i,j,k)$ for $k\in 2a+[b]$ is empty and requires a symbol from $2a+[b]$. Thus, by \Cref{lemma: filling back entries}, the $2a\times 2a\times (2a+b)$ array $([2a],[2a],[2a+b])$ is completed.

    Therefore, the latin cube $L$ is completed.
\end{proof}

\subsection{Completion construction}

In this section we construct the components needed to use \Cref{lemma: odd a construction} for $a\equiv 1,5\pmod6$. The constructions are based on cyclic latin cubes, so we begin with a result which gives a method for constructing such latin cubes. When working modulo $a$ we take the residues in $[a]$; that is $ka\equiv a\pmod{a}$ instead of $0$.

\begin{lemma}
\label{lemma: cyclic is cube}
    Let $a$, $a_1$, $a_2$, $a_3$ and $a_4$ be positive integers such that $\gcd(a,a_i) = 1$ for all $i\in[3]$. Then the $a\times a\times a$ array $L$ with entries defined by $L(i,j,k) = a_1i + a_2j + a_3k + a_4\pmod{a}$, for all $i,j,k\in [a]$, is a latin cube.
\end{lemma}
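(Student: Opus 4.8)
The plan is to verify directly that $L$ meets the definition of a latin cube: every symbol of $[a]$ must occur exactly once in each line, where a line is obtained by fixing two of the three coordinates and letting the third range over $[a]$. Because the three coordinates enter the formula $L(i,j,k)=a_1i+a_2j+a_3k+a_4\pmod a$ in the same way (up to the coefficients $a_1,a_2,a_3$), there are exactly three cases to treat, one for each line direction.

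The single fact driving every case is the following: if $\gcd(a,m)=1$, then the map $x\mapsto mx+c\pmod a$ is a bijection from the residues $[a]$ to themselves. Indeed, $m$ is invertible modulo $a$, so $mx+c\equiv my+c\pmod a$ forces $x\equiv y\pmod a$, and an injection on a finite set is a bijection. One should note only that, per the paper's convention, residues are taken in $[a]=\{1,\dots,a\}$ rather than $\{0,\dots,a-1\}$; this relabelling does not affect bijectivity.

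First I would fix a file, say the cells $(i,j,[a])$ with $i,j$ fixed. Here $L(i,j,k)=a_3k+c$, where $c=a_1i+a_2j+a_4$ is constant in $k$. Since $\gcd(a,a_3)=1$, the fact above shows that as $k$ runs over $[a]$ the value $L(i,j,k)$ runs over all of $[a]$ exactly once, so each symbol occurs exactly once in the file. The rows $(i,[a],k)$ and columns $([a],j,k)$ are handled identically, absorbing the two fixed coordinates into the constant term and invoking $\gcd(a,a_2)=1$ and $\gcd(a,a_1)=1$ respectively. As these three directions exhaust all the lines of $L$, the array is a latin cube.

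There is no genuine obstacle here; the only points requiring care are the bijection fact above and checking that the hypotheses supply exactly the coprimality needed in each direction, namely $\gcd(a,a_1)=\gcd(a,a_2)=\gcd(a,a_3)=1$. No condition on $a_4$ is required, since it contributes only to the constant $c$ in every case and so never affects the bijectivity of the map in any line.
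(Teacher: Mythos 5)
Your proof is correct and takes essentially the same approach as the paper: in each line direction the entry is an affine function $x\mapsto mx+c \pmod a$ with $\gcd(a,m)=1$, and coprimality forces this map to be injective (hence bijective) on $[a]$, so each symbol appears exactly once per line. The paper phrases this as a direct cancellation argument ($a_3k_1\equiv a_3k_2$ implies $k_1=k_2$) for one direction and notes the others are similar, which is the same reasoning you give.
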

\begin{proof}
    Suppose that $L(i,j,k_1) = L(i,j,k_2)$ for some $i,j,k_1,k_2\in[a]$. Then $a_3k_1 \equiv a_3k_2\pmod{a}$, and so $a_3(k_1-k_2) = am$ for some $m\in\mathbb{Z}$. Since $\gcd(a,a_3) = 1$, $k_1 \equiv k_2\pmod{a}$ and thus $k_1=k_2$. Therefore, each line $(i,j,\cdot)$ has each symbol of $[a]$ occurring once.

    A similar proof is used for the lines $(i,\cdot,k)$ and $(\cdot,j,k)$.
\end{proof}

\begin{corollary}
\label{lemma: cyclic is square}
    If $\gcd(a,a_i) = 1$ for all $i\in[2]$, then an $a\times a$ array $L$ with entries defined by $L(i,j) = a_1i + a_2j + a_3 \pmod{a}$ is a latin square of order $a$.
\end{corollary}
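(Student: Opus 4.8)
The plan is to deduce this directly from \Cref{lemma: cyclic is cube} rather than to repeat the divisibility argument, exploiting the fact that \emph{every layer of a latin cube is a latin square}. To see this structural fact, fix the third coordinate at some $k_0\in[a]$ and consider the $a\times a$ array indexed by $(i,j)\mapsto L(i,j,k_0)$. Its "rows" (fixed $i$) are exactly the lines $(i,[a],k_0)$ and its "columns" (fixed $j$) are the lines $([a],j,k_0)$, both of which are lines of the cube; since a latin cube has each symbol appearing once in each such line, each symbol appears once in each row and column of the layer, so the layer is a latin square of order $a$.

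With that observation in hand, I would instantiate \Cref{lemma: cyclic is cube} with the corollary's coefficients $a_1,a_2$ together with a third coefficient equal to $1$ (which is automatically coprime to $a$) and a suitable additive constant, obtaining a latin cube $L'(i,j,k)=a_1 i + a_2 j + k + a_4$. Its hypotheses hold precisely because $\gcd(a,a_1)=\gcd(a,a_2)=1$ is assumed and $\gcd(a,1)=1$ is automatic. Fixing any single value of $k$, say $k=a$ (so that, in the residue convention $ka\equiv a\pmod a$, the term $k$ contributes the zero class), the resulting layer has entries $a_1 i + a_2 j + a_4$, which is a latin square by the paragraph above. Choosing $a_4=a_3$ reproduces the array of the statement verbatim; alternatively one simply notes that an overall additive constant permutes the symbol set bijectively and hence never affects the latin-square property, so the value of the constant is immaterial.

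There is essentially no obstacle here: the only point requiring care is the little structural lemma that a layer of a permutation cube is a latin square, which rests entirely on matching the cube's line terminology (rows and columns) to the rows and columns of the layer, and on keeping the residue convention $ka\equiv a\pmod a$ straight when choosing the fixed coordinate. Should one prefer a self-contained argument, the corollary also admits a one-line direct proof mirroring \Cref{lemma: cyclic is cube}: if $L(i,j_1)=L(i,j_2)$ then $a_2(j_1-j_2)\equiv 0\pmod a$, whence $j_1=j_2$ by $\gcd(a,a_2)=1$, giving distinct entries in each row, and symmetrically $\gcd(a,a_1)=1$ forces distinct entries in each column. I would present the deduction from \Cref{lemma: cyclic is cube} as the primary proof, since the statement is phrased as a corollary, and mention the direct computation only as a remark.
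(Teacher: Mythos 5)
Your proposal is correct, and it matches what the paper intends: the paper states this result as an immediate consequence of \Cref{lemma: cyclic is cube} and gives no separate proof, and your derivation---instantiating the lemma with third coefficient $1$ and constant $a_4=a_3$, then extracting the layer $k=a$ (using that any layer of a latin cube is a latin square, a fact the paper itself invokes in the proof of \Cref{lemma: cube with subcube})---is a faithful filling-in of that deduction. Your alternative remark, running the lemma's divisibility argument directly in two dimensions, is equally valid and is the same computation restricted to rows and columns.
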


A key component of \Cref{lemma: odd a construction} is the existence of two latin cubes $A$ and $B$ which give an extension with a shifted partner. The following result constructs such latin cubes for $a\equiv 1,5\pmod6$. We use these same latin cubes for the remainder of this section.

\begin{lemma}
\label{lemma: specific odd cubes are orthogonal}
    For all $a\equiv 1,5\pmod6$, let $A$ and $B$ be latin cubes of order $a$ where for all $i,j,k\in[a]$,
    \begin{align*}
        A(i,j,k) &= -i+j+k\pmod{a},\\
        B(i,j,k) &= -i-j+2k+1\pmod{a}.
    \end{align*}
    Then for all $b\in[a]$, $A$ and $B$ satisfy \Cref{def: extension by B with S} and the extension $A_{[a-b]}$ satisfies condition (3) from \Cref{lemma: odd a construction}.
\end{lemma}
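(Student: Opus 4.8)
The plan is to verify the three requirements in turn: that $A$ and $B$ are genuine latin cubes, that the quintuples $(i,j,k,A(i,j,k),B(i,j,k))^T$ form an orthogonal array, and that the extension $A_{[a-b]}$ meets condition (3) of \Cref{lemma: odd a construction}. The hypothesis $a\equiv 1,5\pmod 6$ is equivalent to $\gcd(a,6)=1$, so each of $-1,1,2,3$ is coprime to $a$; this single fact drives all three parts.

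First I would read $A$ as a cyclic form with variable-coefficients $(-1,1,1)$ and $B$ as one with $(-1,-1,2)$ (and constant $1$). All of these coefficients are coprime to $a$, so \Cref{lemma: cyclic is cube} applies directly and shows that $A$ and $B$ are latin cubes of order $a$. In particular $B$ is a \emph{full} latin cube, so every symbol—hence in particular every symbol of $[b]$—occurs exactly $a^2$ times, which is the only structural requirement \Cref{def: extension by B with S} places on $B$; and since no cell of $B$ is empty, the quintuple set runs over all $a^3$ triples $(i,j,k)$.

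For the orthogonal-array property I would regard each of the five rows as an affine-linear form in $(i,j,k)$ over $\mathbb{Z}/a$ and record the $5\times 3$ matrix of coefficients (rows $(1,0,0)$, $(0,1,0)$, $(0,0,1)$, $(-1,1,1)$, $(-1,-1,2)$). The array is an $\OA(3,5,a)$ exactly when, for every choice of three of the five rows, the map sending $(i,j,k)$ to the corresponding three forms is a bijection of $(\mathbb{Z}/a)^3$, which happens iff the associated $3\times 3$ minor is a unit modulo $a$. The main bookkeeping step is to compute all $\binom{5}{3}=10$ of these determinants; they turn out to lie in $\{\pm 1,2,3\}$, each coprime to $a$. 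Hence every triple of symbols occurs exactly once and we obtain a (full, so in particular partial) $\OA(3,5,a)$.

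Finally, for condition (3) the key observation is the telescoping identity $A(i,j,k)-(k-1)\equiv -i+j+1\pmod a$, which is independent of $k$. Since $S=[a-b]$ gives $S_k=\{s+k-1\pmod a: s\in[a-b]\}$, membership $A(i,j,k)\in S_k$ is equivalent to $(-i+j+1 \bmod a)\in[a-b]$, a condition on $i,j$ alone. For fixed $i,j$ the map $k\mapsto B(i,j,k)$ is a bijection of $[a]$ because $\gcd(2,a)=1$, so each $k$-extension cell $(i,j,a+m)$ with $m\in[b]$ is filled precisely when its unique source index $k$ satisfies $A(i,j,k)\in S_k$; by the previous sentence this holds simultaneously for every $m$ or for no $m$. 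That yields condition (3) immediately. I expect the ten-determinant enumeration to be the only laborious step, but the genuinely delicate point is the identity $A(i,j,k)-(k-1)\equiv -i+j+1$, which is exactly what makes the fill pattern of the $k$-extension uniform across $m$ and hence is the crux of the argument.
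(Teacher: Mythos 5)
Your proposal is correct and takes essentially the same approach as the paper: \Cref{lemma: cyclic is cube} shows $A$ and $B$ are latin cubes, the orthogonal-array property comes down to invertibility of small integers modulo $a$ (which is exactly where $\gcd(a,6)=1$ enters), and condition (3) follows from the $k$-independent identity $A(i,j,k)-(k-1)\equiv -i+j+1\pmod{a}$ together with the fact that $k\mapsto B(i,j,k)$ is a bijection, precisely as in the paper. The only difference is organizational: you verify all ten $3\times 3$ coefficient minors by determinant, whereas the paper observes that seven of the ten row-triples are already covered because $A$ and $B$ are latin cubes and checks only the three triples involving both $A$ and $B$ by direct elimination (its step $3k\equiv 3k'\pmod{a}$ for the triple $(i,A,B)$ is exactly your determinant $3$ for rows $\{1,4,5\}$), so the two verifications are the same computation in different packaging.
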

\begin{proof}
    Since $A$ and $B$ are latin cubes by \Cref{lemma: cyclic is cube}, we only show that all triples are covered in each of the sets $\{(i, A(i,j,k), B(i,j,k))\mid i,j,k\in[a]\}$, $\{(j, A(i,j,k), B(i,j,k))\mid i,j,k\in[a]\}$ and $\{(k, A(i,j,k), B(i,j,k))\mid i,j,k\in[a]\}$.

    If $(i,A(i,j,k),B(i,j,k)) = (i,A(i,j',k'),B(i,j',k'))$ then $j+k \equiv j'+k'\pmod{a}$ and $-j+2k\equiv -j'+2k'\pmod{a}$. Combining these, we see that $3k\equiv 3k'\pmod{a}$, and since $3\nmid a$, $k=k'$. Thus, $j=j'$ also.

    There is a triple for each combination of $i,j,k\in[a]$, and so each of the $n^3$ possible triples must occur exactly once.

    The proofs for the other sets are similar.

    Let $S = [a-b]$. Observe that $A(i,j,k) = -i+j+k\in S_k$ when $-i+j+k \equiv k-1+s\pmod{a}$ for some $s\in[a-b]$, and thus when $-i+j+1\equiv s\pmod{a}$. Also, for any $i,j\in[a]$ and any $m\in[b]$, there must exist some $k\in[a]$ such that $B(i,j,k) = m$. Therefore, $A_S(i,j,a+m)$ is non-empty if and only if $-i+j+1\equiv s\pmod{a}$ for some $s\in[a-b]$. It follows that $A_S(i,j,a+1)$ is non-empty if and only if $A_S(i,j,a+m)$ is non-empty for all $m\in 1+[b-1]$. Thus, (3) of \Cref{lemma: odd a construction} is satisfied by these latin cubes.
\end{proof}

In order to use these latin cubes $A$ and $B$ with \Cref{lemma: odd a construction}, we need to find a completion and a shifted partner for $A_{[a-b]}$.

\begin{lemma}
\label{lemma: specific odd has shifted partner}
    For all $a\equiv 1,5\pmod6$ and all $b\geq \frac{a}{2}$, there exists a shifted partner for $A_S$, where $S = [a-b]$, with latin cubes $A$ and $B$ as defined in \Cref{lemma: specific odd cubes are orthogonal}.
\end{lemma}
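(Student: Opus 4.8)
The plan is to write down an explicit shifted partner obtained by applying the \emph{same} cyclic shift to the first $a$ layers in the $i$-direction and in the $k$-direction, leaving the layers indexed by $a+[b]$ fixed. Set $c=a-b$ and define $A'_S$ by permuting the layers $(i,[n],[n])$ via $i\mapsto i+c\pmod a$ and the layers $([n],[n],k)$ via $k\mapsto k+c\pmod a$ (residues taken in $[a]$); equivalently $A'_S(i,j,k)=A_S(i-c,j,k-c)$ whenever $i,k\in[a]$. Since this operation only permutes $i$-layers and $k$-layers, it carries the partial latin cube $A_S$ to another partial latin cube, so the first defining property of a shifted partner holds automatically. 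It remains to verify the three bullets governing the $j$-extension and $k$-extension cells.

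The key preparatory step is to record the non-emptiness patterns and entries of the extension cells using the cyclic form of $A$ and $B$. Reusing the computation in \Cref{lemma: specific odd cubes are orthogonal}, the $k$-extension cell $(i,j,a+m)$ is non-empty exactly when $(-i+j+1\bmod a)\in[a-b]$, independently of $m$. An entirely analogous computation---solving $B(i,j,k)=m$ for the unique $j$ (here I use that $a$ is odd, so $B(i,\cdot,k)$ is a bijection) and imposing $A(i,j,k)\in S_k$---shows that the $j$-extension cell $(i,a+m,k)$ is non-empty exactly when $(-2i+2k+2-m\bmod a)\in[a-b]$, and that its entry is then $-2i+3k+1-m\pmod a$. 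These two facts, together with the identity $A(i,j,k)\in S_k \iff (-i+j+1\bmod a)\in[a-b]$ already isolated in \Cref{lemma: specific odd cubes are orthogonal}, are all I need.

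With both shifts equal to $c$, the combination $-2i+2k$ is invariant, so the $j$-extension non-emptiness pattern of $A'_S$ agrees with that of $A_S$; this is the second bullet. For the third bullet I substitute into the $k$-extension pattern: $A'_S(i,j,a+m)$ is non-empty iff $(-i+j+1+c\bmod a)\in[a-b]$, so requiring that every non-empty $k$-extension cell of $A_S$ become empty in $A'_S$ is exactly the disjointness of $[a-b]$ from its translate $[a-b]-c$ modulo $a$. For the fourth bullet I take the shifted entry $-2i+3k+1-m-c$, subtract the base offset $k-1$ of $S_k$, and reduce to the disjointness of $[a-b]$ from the translate $[a-b]+c$. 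Choosing $c=a-b$, these two translates are $\{b+1,\dots,a\}$ and $\{a-b+1,\dots,2(a-b)\}$ modulo $a$, each of which is disjoint from $[a-b]=\{1,\dots,a-b\}$ precisely when $2(a-b)\le a$, i.e.\ when $b\ge\frac a2$---which is the hypothesis.

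I expect the main obstacle to be arranging for all three bullets to hold for a \emph{single} pair of layer permutations. The tension is that the second bullet forces the two shifts to cancel in the combination $-2i+2k$ (hence $\sigma=\tau$), whereas the third and fourth bullets require a genuine nonzero displacement of the threshold set $[a-b]$ that controls the $k$-extension through the combination $-i+j$. Equal diagonal shifts resolve this tension at once: they fix the even combination governing the $j$-extension while still translating the odd combination governing the $k$-extension. Once this is arranged, the numerical content collapses to the single inequality $b\ge\frac a2$, and confirming that this inequality is exactly what both disjointness requirements need is the crux of the verification.
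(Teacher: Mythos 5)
Your proposal is correct and takes essentially the same approach as the paper: the paper's shifted partner is likewise an equal cyclic shift of the $i$- and $k$-layers (by $b-1$ rather than your $-(a-b)\equiv b\pmod{a}$, an immaterial difference), and its verification of the bullets reduces to the same disjointness-of-translates conditions on $[a-b]$, which is exactly where $b\geq\frac{a}{2}$ enters. One harmless slip: oddness of $a$ is not what makes $j\mapsto B(i,j,k)$ a bijection (its coefficient is $-1$, so this holds for every $a$); oddness is needed only when solving $B(i,j,k)=m$ for $k$.
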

\begin{proof}
    Let $A'_S$ be a partial latin cube of order $a+b$ such that for all $i,j,k\in[a+b]$
    $$A'_S(i,j,k) = \begin{cases}
        A_S\big(i+b-1\pmod{a},j,k+b-1\pmod{a}\big), & \text{if $i,k\in[a]$,}\\
        A_S\big(i+b-1\pmod{a},j,k\big), & \text{if $i\in[a]$ and $k>a$,}\\
        A_S\big(i,j,k+b-1\pmod{a}\big), & \text{if $k\in[a]$ and $i>a$.}\\
    \end{cases}$$
    Note that if $i,k>a$ then $A_S(i,j,k)$ and $A'_S(i,j,k)$ are both empty.
    
    Clearly $A'_S$ is a permuted copy of $A_S$, and $A'_S$ is an extension of $A'$ by $B'$ with $S'$ where $S' = b-1+[a-b]$ and
    \begin{align*}
        A'(i,j,k) &= -i+j+k\pmod{a} = A(i,j,k),\\
        B'(i,j,k) &= -i-j+2k+b\pmod{a}.
    \end{align*}

    If $A_S(i,a+m,k)$ is non-empty for some $i,k\in[a]$ and $m\in[b]$, then there exists a $j\in[a]$ such that $B(i,j,k) = m$ and $A(i,j,k)\in S_k$. Let $j'=j+b-1\pmod{a}$ and observe that $B'(i,j',k) = B(i,j,k) = m$ and $A'(i,j',k) = b-1 + A(i,j,k)\in S'_k$. Thus, $A'_S(i,a+m,k)$ is non-empty. Similarly, if $A_S(i,a+m,k)$ is empty, then $B(i,j,k) = m$ but $A(i,j,k)\notin S_k$ for some $j\in[a]$. Thus, $B'(i,j',k) = m$ and $A'(i,j',k)\notin S'_k$, so $A'_S(i,a+m,k)$ is empty.

    As shown in the proof of \Cref{lemma: specific odd cubes are orthogonal}, $A_S(i,j,a+m)$ for $i,j\in[a]$ and $m\in[b]$ is non-empty if and only if $-i+j+1\equiv s\pmod{a}$ for some $s\in[a-b]$. By a similar proof, $A'_S(i,j,a+m)$ is non-empty if and only if $-i+j+1\equiv b-1+s\pmod{a}$ for $s\in[a-b]$. Suppose there exists $i,j\in[a]$ such that both $-i+j+1\equiv s$ and $-i+j+2-b\equiv s'$ for some $s,s'\in[a-b]$. Then $b-1\equiv s-s'\pmod{a}$. Since $b\geq \frac{a}{2}$ and $a$ is odd, $a-b\leq b-1$ and so $s-s'\equiv d$ where $1\leq d\leq b-2$ or $b+1\leq d\leq a$. Thus, there are no such $i,j\in[a]$ such that both $A_S(i,j,a+m)$ and $A'_S(i,j,a+m)$ are non-empty.

    In layer $(\cdot,\cdot,k)$ for $k\in[a]$, the symbols in the cells of $([a],a+[b],k)$ in $A'_S$ come from $S'_k$. For any $k\in[a]$, if $\ell\in S_k$ and $\ell\in S'_k$ then $\ell\equiv k-1+s\pmod{a}$ and $\ell\equiv k-1+b-1+s'\pmod{a}$ for some $s,s'\in[a-b]$. Thus, $b-1\equiv s-s'\pmod{a}$. As above, this is not possible, and so $S_k$ and $S'_k$ are disjoint sets of symbols.

    Therefore, $A'_S$ is a shifted partner of $A_S$.
\end{proof}

We have shown that our chosen latin cubes $A$ and $B$ satisfy (1) and (3) from \Cref{lemma: odd a construction}, and the last requirement is a completion of $A_S$.

\begin{lemma}
\label{lemma: specific odd cubes rely on last square}
    Let $L$ be a partial latin square of order $a\equiv 1,5\pmod6$ with $a\geq b$ such that
    $$L(i+a-b,j) = \begin{cases}
    \frac{a+3}{2}j + \frac{a+1}{2}i - 1\pmod{a}, & \text{for all $i\in [b]$ and $j\in[a-b]$,}
    \end{cases}$$
    $$L(i,j+a-b) = \begin{cases}
        \frac{a+3}{2}i + \frac{a+1}{2}j - 1\pmod{a}, & \text{for all $i\in [a-b]$ and $j\in[b]$,}
    \end{cases}$$
    and $L(i,j)$ is empty when both $i,j\in[a-b]$ or both $i,j\in a-b+[b]$. If the cells $(i,j)$ where $i,j\in a-b+[b]$ can be filled, then there exists a completion $A^*_S$ of $A_S$, with $A_S$ as defined in \Cref{lemma: specific odd cubes are orthogonal}.
\end{lemma}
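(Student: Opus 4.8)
The plan is to produce the completion $A^*_S$ explicitly, keeping every cell of $A_S$ that is already filled and supplying values for the empty cells from a completed copy of $L$. First I would record which cells of $A_S$ are still empty: by \Cref{def: extension by B with S} the centre $([a],[a],[a])$ is entirely filled, each single-index extension is only partially filled (its symbols all lying in the rotating set $S_k$ in layer $k$), and the three two-index extensions together with the corner $(a+[b],a+[b],a+[b])$ are completely empty. The observation that drives everything is that the cyclic formulas $A(i,j,k)=-i+j+k\pmod a$ and $B(i,j,k)=-i-j+2k+1\pmod a$ make the filled pattern of layer $(\cdot,\cdot,k)$ a fixed shift (through $S_k$) of the pattern of layer $(\cdot,\cdot,1)$. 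Hence a single order-$a$ square controls every layer, and writing $\alpha=\tfrac{a+1}{2}$ for the inverse of $2$ and $\beta=\alpha+1$ (both units modulo $a$ since $3\nmid a$), I would check that the two prescribed border blocks of $L$, namely $L(i+a-b,j)=\beta j+\alpha i-1$ and $L(i,j+a-b)=\beta i+\alpha j-1$, reproduce cell for cell the symbols already forced into the single-index extensions of $A_S$. The payoff of this matching is that the empty $b\times b$ corner of $L$ corresponds exactly to the still-undetermined corner of the cube.

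Next I would dispose of the large but routine part. The two-index extensions and the extension layers $k\in a+[b]$ carry the symbols pushed out of the centre, and each forms a box missing only a rectangular channel; I would complete them using \Cref{lemma: right corner channel} and \Cref{lemma: filling back entries}, which reduce the three-dimensional completion to filling one distinguished layer that is an $r\times n$ latin rectangle extendable by Ryser's \Cref{thm: Ryser}. Each hypothesis of those lemmas is a count of how often a symbol occurs, and every such count follows mechanically from the fact that along any symbol-diagonal of a cyclic layer the value of $B$ runs through all residues exactly once; this is precisely the bookkeeping already done in the proof of \Cref{lemma: specific odd cubes are orthogonal}. In the same way, completing the empty top-left $(a-b)\times(a-b)$ block of $L$ is a plain latin-rectangle extension handled by \Cref{thm: Ryser}, so that the only genuinely undetermined data is the $b\times b$ corner of $L$.

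Finally, I would fill the corner cube $(a+[b],a+[b],a+[b])$ by propagating a legal filling of the $b\times b$ corner of $L$ cyclically through the $b$ corner-layers, the third coordinate staying latin by \Cref{lemma: cyclic is square}. This is where the hypothesis enters: by the border-matching of the first step, a conflict-free filling of the $b\times b$ corner of $L$ \emph{is} the same data as a conflict-free filling of the corner of the cube, so the assumption that those cells can be filled is exactly what is needed. I would close with the line-by-line latin verification of the assembled array, in which every repeated-symbol check collapses either to the latin property of the completed $L$ (for lines meeting a border block or the corner) or to the cyclic identities for $A$ and $B$ (for lines running through the centre and the single-index extensions), feeding the result back into \Cref{lemma: odd a construction}.

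The hard part will be the seam bookkeeping of the first paragraph: verifying that the explicit cyclic values forced into the single-index extensions of $A_S$ agree, residue by residue, with the two border formulas prescribed for $L$, and that no symbol is double-counted where a centre line crosses into an extension and on into the corner. Once this border-matching is pinned down, the completion lemmas and the corner hypothesis finish the construction almost automatically.
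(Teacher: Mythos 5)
Your proof has a genuine gap at its central step: the identification of the empty $b\times b$ corner of $L$ with the corner cube $(a+[b],a+[b],a+[b])$ of $A^*_S$ is false, and the filling you propose for that corner is impossible in \emph{any} completion of $A_S$. To see this, fix $j,k\in[a]$ and look at the column $(\cdot,j,k)$. The centre cells $([a],j,k)$ are all filled by $A_S$, and the forced cells of the $i$-extension $(a+[b],j,k)$ carry, by \Cref{def: extension by B with S}, exactly those symbols of $[a]$ that the replacement $A(i,j,k)\mapsto a+B(i,j,k)$ removed from the centre; hence the \emph{empty} cells of the $i$-extension in this column can only ever receive symbols of $a+[b]$. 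Consequently, in the row line $(a+m,[a+b],k)$ the cell lying in the box $(a+[b],a+[b],[a])$ must avoid both $S_k$ (the forced symbols) and $a+[b]$, so every entry of that box lies in $[a]$; but then each line $(a+m,a+n,\cdot)$ meets that box in $a$ distinct symbols of $[a]$, i.e.\ all of them, which forces the corner cube onto the symbol set $a+[b]$. The corner cube therefore cannot hold the entries of $L$'s corner (which lie in $[a]$); it is in fact the trivial part of the completion --- any order-$b$ latin cube on $a+[b]$ works there. Where the hypothesis on $L$ is genuinely needed is the box $([a],a+[b],a+[b])$: each of its lines $(i,a+m,\cdot)$ and $(i,\cdot,a+n)$ already contains $a-b$ forced symbols of $[a]$ coming from the $j$- and $k$-extensions of $A_S$, and these forced sets are precisely (after the shift by $i-1$) the prescribed border rows and columns of $L$. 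The paper sets $A^*_S(i,a+m,a+n)=L(a-b+m,a-b+n)+i-1\pmod{a}$, so that completability of $L$'s corner is exactly what makes those lines latin. Your proposal relegates this box to the ``routine'' part, so the real difficulty is never addressed.

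A second, related problem is that the routine part is not routine in the way you describe. The empty cells of the $i$- and $j$-extensions of $A_S$ do not form a rectangular channel: they sit on cyclic diagonals determined by the condition $A(i,j,k)\notin S_k$, $B(i,j,k)\in[b]$, so neither \Cref{lemma: right corner channel} nor \Cref{lemma: filling back entries} applies to them, and the same is true of the box $(a+[b],[a],a+[b])$. The paper fills these three boxes with explicit cyclic formulas (verified via \Cref{lemma: cyclic is cube} together with conflict checks against the forced entries); only $([a],[a],a+[b])$ is handled by \Cref{lemma: filling back entries}, and only $(a+[b],a+[b],[a])$ by \Cref{lemma: right corner channel}. (Also, the top-left $(a-b)\times(a-b)$ block of $L$ is never completed or needed --- only the $b\times b$ corner matters.) So both the delicate step and the mechanical steps live in different places from where you put them, and as written your construction is internally inconsistent: filling the two-index extensions by the completion lemmas would itself force the corner cube onto $a+[b]$, contradicting your final step.
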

\begin{proof}
    Split $A^*_S$ into subarrays $A_{i,j,k}$ for $i,j,k\in[2]$, where $A_{1,j,k}$ is in the layers $([a],\cdot,\cdot)$, $A_{2,j,k}$ is in the layers $(a+[b],j,k)$, and $j$ and $k$ are defined similarly.

    $A_{1,1,1}$ is already filled and in the layer $(\cdot,\cdot,k)$ for $k\in[a]$, the symbols of $S_k$ each occur $a-b$ times, the symbols of $a+[b]$ each occur $a-b$ times, and the symbols of $[a]\setminus S_k$ each occur $a$ times. The same layer of $A_{2,1,1}$ has $b$ copies of each symbol in $S_k$ and is otherwise empty. Clearly, this must be filled with symbols from $a+[b]$.

    \vspace{0.125cm}
    \noindent\textbf{Subarray $\mathbf{A_{2,1,1}}$:}
    
    For $i,j,k\in[a]$, if $A(i,j,k)\notin S_k$ and $B(i,j,k)\in[b]$, then set $$A^*_S\big(a+B(i,j,k),j,k\big) = a+ \big(k-A(i,j,k)\pmod{a}\big).$$
    It is clear that this will fill all cells that are empty in the subarray and that all entries are from $a+[b]$. Further, given $i,j,k\in[a]$, if $B(i,j,k) \equiv -i-j+2k+1\equiv m\pmod{a}$, then the entry in $A^*_S(a+m,j,k)$ is $a+(-2j+2k+1-m\pmod{a})$. This shows that, according to \Cref{lemma: cyclic is cube}, this subarray is a subset of an order $a$ latin cube $C$ defined by $C(i,j,k) = a+(-i-2j+2k+1\pmod{a})$, and so the same symbol will not occur twice in any line.

    If $A^*_S(a+m,j,k) = A^*_S(i,j,k)$ for $i,j,k\in[a]$ and $m\in[b]$, then there exists $i'\in[a]$ such that $B(i',j,k) = m$, while $B(i,j,k) \equiv k-A(i',j,k)\pmod{a}$. Thus, $i'\equiv -j+2k+1-m\pmod{a}$ and $-i-j+2k+1\equiv -i'+j+k\pmod{a}$, and so $i-j\equiv m\pmod{a}$. Since $A(i,j,k)\in S_k$, $-i+j+1\equiv s\pmod{a}$ for some $s\in[a-b]$. Then $m\equiv -s+1\pmod{a}$. However, $m\in[b]$ but $-s+1\equiv s'\pmod{a}$ where $b+1\leq s'\leq a$. Thus, it is not possible that $A^*_S(a+m,j,k) = A^*_S(i,j,k)$.

    Therefore, $A_{2,1,1}$ is filled.

    \vspace{0.125cm}
    \noindent\textbf{Subarray $\mathbf{A_{1,2,1}}$:}

    As with $A_{2,1,1}$, the empty cells in each layer $(\cdot,\cdot,k)$ of $A_{1,2,1}$ need to be filled with symbols from $a+[b]$. Similarly to above, for $i,j,k\in[a]$, if $A(i,j,k)\notin S_k$ and $B(i,j,k)\in[b]$, then set $$A^*_S\big(i,a+B(i,j,k),k\big) = a+ \big(A(i,j,k)-k+1+b\pmod{a}\big).$$
    If $B(i,j,k) = m$, then the entry in $A^*_S(i,a+m,k)$ is $a+(-2i+2k+2+b-m\pmod{a})$. Thus, the new entries are a subset of the order $a$ latin cube $C$ given by $C(i,j,k) = a+(-2i-j+2k+2+b\pmod{a})$, which is a latin cube by \Cref{lemma: cyclic is cube}.

    If $A^*_S(i,a+m,k) = A^*_S(i,j,k)$ for $i,j,k\in[a]$ and $m\in[b]$, then there exists $j'\in[a]$ such that $B(i,j',k) = m$, while $B(i,j,k) \equiv A(i,j',k)-k+1+b\pmod{a}$. It follows that $j'\equiv -i+2k+1-m\pmod{a}$ and $-i-j+2k+1\equiv -i+j'+1+b\pmod{a}$, and so $i-j\equiv b+1-m\pmod{a}$. Since $A(i,j,k)\in S_k$, $-i+j+1\equiv s\pmod{a}$ for some $s\in[a-b]$. Thus, $m-b\equiv s\pmod{a}$. However, $m\in[b]$ and so $m-b\equiv s'$ where $a-b+1\leq s'\leq a$ but $s\in[a-b]$. Thus, it is not possible that $A^*_S(i,a+m,k) = A^*_S(i,j,k)$.

    \vspace{0.125cm}
    \noindent\textbf{Subarrays $\mathbf{A_{1,1,2}}$:}

    As shown earlier, the empty cells of $A_{1,1,2}$ appear such that any line $(i,j,\cdot)$ either has all of the cells in this subarray filled or all cells in the subarray empty. If empty, they require the symbols of $a+[b]$ and these symbols do not already appear anywhere in the subarray. Thus, by \Cref{lemma: filling back entries}, the $a\times a\times (a+b)$ array formed from $A_{1,1,1}$ and $A_{1,1,2}$ is filled.

    \vspace{0.125cm}
    \noindent\textbf{Subarray $\mathbf{A_{2,2,1}}$:}
    
    The symbols in the line $(i,\cdot,k)$ or $(\cdot,j,k)$, for any $i,j\in a+[b]$ and $k\in[a]$, are in the set $V_k = S_k\cup (a+[b])$. Thus, each symbol of $a+[b]$ occurs $a$ times and the symbols of $[a]$ occur $a-b$ times in the layer $(a+1,\cdot,[a])$. Thus, using \Cref{lemma: right corner channel} with $n=a+b$, $r=a$ and $s=b$, $A_{2,2,1}$ is filled with just the symbols of $[a]$.

    \vspace{0.125cm}
    \noindent\textbf{Subarray $\mathbf{A_{2,1,2}}$:}
    
    To fill $A_{2,1,2}$, for $m,n\in [b]$ and $j\in[a]$, let $A^*_S(a+m,j,a+n) = \frac{a+1}{2}(m+n-b)+j-1\pmod{a}$. Since $\gcd(a,\frac{a+1}{2})=1$, this subarray is then, by \Cref{lemma: cyclic is cube}, a subset of a latin cube and only uses the symbols of $[a]$. If $A^*_S(a+m,j,a+n) = A^*_S(i,j,a+n)$ for some $i,j\in[a]$ and $m,n\in[b]$, then there exists some $k\in[a]$ such that $A(i,j,k) = A^*_S(i,j,a+n)$ and $B(i,j,k) = n$. Thus, $-i+j+k\equiv \frac{a+1}{2}(m+n-b)+j-1\pmod{a}$ and $2k\equiv n+i+j-1\pmod{a}$, and so $-i+3j+n-1\equiv m+n-b+2j-2\pmod{a}$ and $-i+j+1\equiv m-b\pmod{a}$. Since $A(i,j,k)\in S_k$, by the same reasoning as with $A_{1,2,1}$ this is not possible.

    If $A^*_S(a+m,j,a+n) = A^*_S(a+m,j,k)$ for some $j,k\in[a]$ and $m,n\in[b]$, then there exists an $i\in[a]$ such that $A(i,j,k) = A^*_S(a+m,j,k)$ and $B(i,j,k) = m$. In the same way as above, it follows that $-i+j+1\equiv n-b\pmod{a}$ and this is not possible.
    
    \vspace{0.125cm}
    \noindent\textbf{Subarray $\mathbf{A_{1,2,2}}$:}

    By assumption, we can complete the cells $(i,j)$ with $i,j\in a-b+[b]$ of the partial latin square $L$. The other entries of $L$ were chosen so that this $b\times b$ subarray of $L$ can be used to fill $A_{1,2,2}$.
    
    Let $A^*_S(i,a+m,a+n) = L(a-b+m,a-b+n)+i-1\pmod{a}$. All entries are in $[a]$ since the symbols of $a+[b]$ occur in every row of $A_{1,1,2}$. In the line $(i,[a],a+m)$ for $i\in[a]$ and $m\in[b]$, if the symbol $\ell\in[a]$ appears then there exists $j,k\in[a]$ such that $B(i,j,k) = m$ and $A(i,j,k)=\ell\in S_k$. Note that there are $a-b$ such entries in each line.

    For all $i\in[a]$ and $m\in[b]$, choose any $x\in[a-b]$ and let $\ell \equiv i + \frac{a+3}{2}x + \frac{a+1}{2}m -2\pmod{a}$, $j \equiv i + x -1\pmod{a}$ and $k\equiv i + \frac{a+1}{2}(x+m) - 1\pmod{a}$. Then $A(i,j,k) \equiv i + \frac{a+3}{2}x + \frac{a+1}{2}m - 2 \equiv \ell\equiv k-1+x\pmod{a}\in S_k$ and $B(i,j,k) \equiv m\pmod{a}$. Since $m\in[b]$, $A^*_S(i,j,a+m) = \ell$ and $A^*_S(i,a+m,k) = \ell$. Therefore, the $a-b$ entries from $[a]$ in the line $(i,[a],a+m)$ and in the line $(i,a+m,[a])$ are $\{i + \frac{a+3}{2}x + \frac{a+1}{2}m -2\pmod{a}\mid x\in[a-b]\}$. This set of symbols is the same as $\{i-1+L(a-b+m,x)\mid x\in[a-b]\}$ and $\{i-1+L(x,a-b+m)\mid x\in[a-b]\}$ which are disjoint sets to the symbols used in the part of the line in $A_{1,2,2}$. Therefore, $A_{1,2,2}$ is filled.
    
    \vspace{0.125cm}
    \noindent\textbf{Subarray $\mathbf{A_{2,2,2}}$:}

    Observe that $A_{2,2,1}$, $A_{2,1,2}$ and $A_{1,2,2}$ contain only symbols from $[a]$, so $A_{2,2,2}$ is a subcube on the symbols of $a+[b]$. Thus, it is easily filled with any order $b$ latin cube on the symbols of $a+[b]$.
\end{proof}

Combining the previous results, we construct a $\LC(a^2b^1)$ for all $b\geq \frac{a}{2}$ with $a\equiv 1,5\pmod6$.

\begin{theorem}
\label{thm: odd a final}
    For all $a\equiv 1,5\pmod6$ and $b\geq\frac{a}{2}$, there exists a $\LC(a^2b^1)$.
\end{theorem}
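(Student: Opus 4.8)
The plan is to assemble the three preceding lemmas into the single hypothesis-check demanded by \Cref{lemma: odd a construction}: once conditions (1)--(3) of that lemma hold for one concrete pair $A,B$, it outputs the required $\LC(a^2b^1)$. I would take $A$ and $B$ to be the cyclic cubes of \Cref{lemma: specific odd cubes are orthogonal}, which already guarantees that $A,B$ satisfy \Cref{def: extension by B with S} and that $A_{[a-b]}$ meets condition (3) for every $b$. Since $b\geq\frac a2$, \Cref{lemma: specific odd has shifted partner} furnishes a shifted partner $A'_{[a-b]}$, i.e.\ condition (1). Everything therefore collapses onto condition (2), the existence of a completion $A^*_{[a-b]}$, which by \Cref{lemma: specific odd cubes rely on last square} is reduced to a single combinatorial statement: that the bottom-right $b\times b$ block of the auxiliary partial latin square $L$ — the cells $(i,j)$ with $i,j\in a-b+[b]$ — admits a consistent filling.

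The substance of the proof is thus this one block-filling claim. Writing $d=\frac{a+3}{2}$ and $e=\frac{a+1}{2}$ (so $d-e=1$, with $\gcd(e,a)=1$ for odd $a$ and $\gcd(d,a)=1$ precisely because $a\equiv1,5\pmod6$ forces $3\nmid a$), the prescribed entries of $L$ dictate that each bottom row $r=a-b+i$ must receive, in its empty cells, exactly the $b$ residues $d(a-b+t)+ei-1$ ($t\in[b]$) absent from its filled left entries; call this set $M_r$. Symmetrically, each right column $c=a-b+j$ must receive the set $N_c$ of residues $d(a-b+t)+ej-1$ ($t\in[b]$). The decisive structural feature is that every $M_r$ and $N_c$ is a cyclic arithmetic progression of length $b$ with the \emph{same} common difference $d$; relabelling symbols by $s\mapsto d^{-1}(s+1)$ turns each demand into a genuine cyclic interval of length $b$, the intervals for consecutive rows (and consecutive columns) being shifted by the fixed amount $d^{-1}e\equiv 3^{-1}\pmod a$. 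I would then fill the block by a coordinated assignment realising these interval demands, verifying through \Cref{lemma: cyclic is square} or a direct count that each row bijects onto $M_r$ and each column onto $N_c$. A robust alternative is to check that each symbol is absent from as many bottom rows as right columns and build the fill by a matching/edge-colouring argument in the spirit of \Cref{lemma: filling back entries} (via \Cref{thm: de werra}), with Ryser's \Cref{thm: Ryser} available for the row-completion step.

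With the block filled, \Cref{lemma: specific odd cubes rely on last square} delivers $A^*_{[a-b]}$ and hence condition (2), and \Cref{lemma: odd a construction} completes the theorem for all $a\equiv1,5\pmod6$ and $b\geq\frac a2$. The main obstacle I anticipate lies precisely in the coordinated filling of the block: although the row demands and the column demands share the common step $d=\frac{a+3}{2}$, their base points advance by $e=\frac{a+1}{2}\neq d$, so \emph{no single cyclic template of the form $\alpha m+\beta n+\gamma$ can satisfy both families at once}, and one is forced into a genuine two-dimensional completion argument rather than a one-line formula. What rescues the construction is the shared difference together with the hypothesis $b\geq\frac a2$ (already exploited for the shifted partner), which makes $a-b\leq b$ and keeps the intervals long enough that every (row-interval, column-interval) pair meets and Hall's condition holds; I would concentrate the verification there and on the boundary interplay between the thin width $a-b$ and the block size $b$.
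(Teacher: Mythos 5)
You have reproduced the paper's reduction exactly: \Cref{lemma: specific odd cubes are orthogonal} and \Cref{lemma: specific odd has shifted partner} give conditions (3) and (1) of \Cref{lemma: odd a construction}, and \Cref{lemma: specific odd cubes rely on last square} reduces condition (2) to filling the cells $(i,j)$ with $i,j\in a-b+[b]$ of the auxiliary square $L$. Your identification of the row and column demands as cyclic arithmetic progressions with common difference $\frac{a+3}{2}$, and of the shift $d^{-1}e\equiv 3^{-1}\pmod a$ between consecutive demands, is also precisely the structure the paper exploits (it writes $g$ for $3^{-1}$ and uses the identity $\frac{a+1}{2}\equiv g\,\frac{a+3}{2}\pmod a$). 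But this is where the proof must begin, not end: filling that $b\times b$ block \emph{is} the substance of the theorem, and you leave it as ``a coordinated assignment realising these interval demands'' to be ``verified''. The paper's proof spends its entire second half on exactly this step: it builds the cyclic square $K(i,j)=\frac{a+3}{2}(i+j+2-2g)-1$, partitions the rows and columns of $a-b+[b]$ into three subsequences $U,V,W$ by residue modulo $3$, and fills the block piecewise with explicitly chosen subarrays of $K$, in two separate cases $a\equiv 5$ and $a\equiv 1\pmod 6$ (\Cref{fig: L construction 2 cases}), verifying latinness by chasing subarray indices around each row and column. Since you yourself observe that no single affine formula $\alpha m+\beta n+\gamma$ can satisfy both families of demands, some such piecewise construction is unavoidable, and you do not supply one.

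Your fallback does not close this gap. The filling problem requires, simultaneously for every symbol $s$, a perfect matching between the rows with $s\in M_r$ and the columns with $s\in N_c$, all of these matchings pairwise disjoint as sets of cells. The counting condition alone does not guarantee this: with $b=2$ and demands $M_1=N_1=\{1,2\}$, $M_2=N_2=\{3,4\}$ the counts balance, yet symbols $1$ and $2$ are both forced into the single cell $(1,1)$. Your interval structure happens to exclude that particular configuration, but showing it excludes \emph{every} obstruction is exactly the missing work; Hall's theorem (or an equitable colouring via \Cref{thm: de werra}) produces one matching or one colour class at a time and says nothing about packing them disjointly with prescribed row- and column-supports. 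Nor do the paper's completion lemmas apply to this shape: \Cref{lemma: filling back entries} needs all deficient lines to miss the \emph{same} symbol set $V$, and \Cref{thm: Ryser} needs the filled cells to form a complete $r\times s$ corner, whereas here every row and every column of the block demands a different $b$-subset of $[a]$, and the cellwise lists $M_r\cap N_c$ can be much smaller than $b$ when $b$ is near $\frac{a}{2}$. A final small point: when $b=a$ the set $S=[a-b]$ is empty and the whole construction degenerates; the paper disposes of this case separately via \Cref{a^n}, which your argument should also do.
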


\begin{proof}
If $b=a$ then the 3-realization is a $\LC(a^3)$ and this exists by \Cref{a^n}. Thus, we assume that $b<a$ and $a\geq 5$.

    We use \Cref{lemma: odd a construction} to construct a $\LC(a^2b^1)$. Therefore, we must fill the latin square $L$ given in \Cref{lemma: specific odd cubes rely on last square}.

    We denote the multiplicative inverse of 3 modulo $a$ by
    \begin{equation*}
    g = \begin{cases} 
    \frac{2a+1}{3} & \text{if } a\equiv 1 \pmod{6}, \\ 
    \frac{a+1}{3} & \text{if } a\equiv 5 \pmod{6}. \\   
    \end{cases}
    \end{equation*}

   Construct a new order $a$ latin square $K$ with $$K(i,j) = \frac{a+3}{2}(i+j+2-2g)-1\pmod{a}$$ for all $i,j\in[a]$, which is a latin square by \Cref{lemma: cyclic is square}. Observe that $3\cdot\frac{a+1}{2} \equiv \frac{a+3}{2}\pmod{a}$ and hence $\frac{a+1}{2} \equiv g\frac{a+3}{2}\pmod{a}$. Thus, for $i\in[a-b]$ and $j\in a-b+[b]$,
    \begin{align*}
        L(i,j) &= \frac{a+3}{2}i + \frac{a+1}{2}(j+b) -1\pmod{a}\\
        &= \frac{a+3}{2}(i+gj+gb) -1\pmod{a}\\
        &= K(i-1,g(j+b+2)-1).
    \end{align*}

Let $d_1$ and $d_2$ be indicator variables which are equal to 1 when $b$ is congruent respectively to 1 or 2 modulo 3, and zero otherwise, and write $b=3y+d_1-d_2$.
Partition $a-b+[b]$ into sequences $U = (u_i)_{i=1}^{y+d_1}$, $V = (v_i)_{i=1}^y$ and $W = (w_i)_{i=1}^{y-d_2}$ with
    $$u_i = a-b-2+3i,\quad v_i = a-b-1+3i,\quad w_i = a-b+3i.$$
Then for $i \in [a-b]$ we have
\begin{align*}
    L(i,u_j) &= L(u_j,i) = K(i-1,j-1),\quad j \in [y+d_1],\\
    L(i,v_j) &= L(v_j,i) =K(i-1,j+g-1),\quad j \in [y],\\
    L(i,w_j) &= L(w_j,i) =K(i-1,j+2g-1),\quad j \in [y-d_2].
\end{align*}

We rearrange the rows and columns of $L$ in the order $[a-b]$, $U$, $V$, $W$, so that the regions of $L$ that are already specified can be filled using subarrays of $K$ on consecutive sequences of rows and columns$\pmod{a}$.
In a similar manner, we complete the portion of $L$ with rows and columns in $a-b+[b]$ with subarrays of $K$. Since $K(i,j)=K(i+d\pmod{a},j-d\pmod{a})$ for any $i,j,d\in[a]$, any subarray of $K$ on rows $i-1+[r]$ and columns $j-1+[c]$ (taken$\pmod{a}$) can be specified by the dimensions $r$ and $c$ and the sum $i+j$, which we refer to as the {\em index} of the subarray. Any two subarrays of $K$ on $c$ columns can be regarded as belonging to the same set of columns, and if adding the index of one subarray to its row count gives the index of the other subarray, then they correspond to consecutive sets of rows within those $c$ columns.

The required constructions are given in \Cref{fig: L construction 2 cases}, with subarrays of $K$ specified by the indices and the dimensions shown in bold at the top and left. Note that when $a\equiv 5\pmod6$ we set $a=3x-1$ with $g=x$, and when $a\equiv 1\pmod6$ we let $a=3x+1$ and $g=2x+1$. The highlighted subarrays are the entries that are specified in the definition of $L$. In the columns, sequences $U$, $V$ and $W$ have been divided into subsequences of the indicated lengths. By the conditions $b<a\leq 2b$ and $a \geq 5$, each of these lengths must be non-negative.

Consider the subarrays in \Cref{fig: L construction 2 cases} (A) corresponding to the rows in $U$. Beginning from any subarray, adding the index value to the width (number of columns) gives the index of another column, working modulo $a$. Continuing in this manner, we cycle through all 8 subarrays, demonstrating that these subarrays correspond to $|U|$ rows of $K$, with columns permuted in blocks, and so these rows of $L$ have each symbol occurring exactly once. The rows of $V$ and $W$, and the columns for each of the subsequences of $U$, $V$ and $W$, as well as case (B), can all be verified in the same way.

    \begin{figure}[h]
    \centering
    \begin{subfigure} {0.98\linewidth}
    \centering
    \scalebox{0.8}{
    $\begin{array}{c!{\vline width 1.5pt}c!{\vline width 1.5pt}c!{\vline width 1.5pt}c|c!{\vline width 1.5pt}c|c!{\vline width 1.5pt}c|c|c!{\vline width 1.5pt}}
         \multicolumn{2}{c}{} & \multicolumn{1}{c}{[a-b]} & \multicolumn{2}{c}{U} & \multicolumn{2}{c}{V} & \multicolumn{3}{c}{W} \\ 
        \cline{3-10}
        \multicolumn{2}{c!{\vline width 1.5pt}}{} & {\bf 3x-3y-1} & {\bf -x+2y} & {\bf x-y} & {\bf -x+2y} & {\bf x-y} & {\bf -x+2y} & {\bf x-y-1} & {\bf 1} \\
        \multicolumn{2}{c!{\vline width 1.5pt}}{} & {\bf -d_1+d_2} & {\bf +d_1} &  & {\bf +d_1-d_2} & {\bf -d_1+d_2} & {\bf +d_1} & {} & {\bf -d_1-d_2} \\
        \cline{2-10}
        [a-b] & {\bf 3x-3y-1} & \cellcolor{lightgray}\text{unfilled} & \cellcolor{lightgray}0	& \cellcolor{lightgray}-x+2y &	\cellcolor{lightgray}x & \cellcolor{lightgray}2y & \cellcolor{lightgray}-x+1 & \cellcolor{lightgray}x+2y & \cellcolor{lightgray}-x+y \\
        & {\bf -d_1+d_2} & \cellcolor{lightgray} & \cellcolor{lightgray}	& \cellcolor{lightgray}+d_1 &\cellcolor{lightgray}	 & \cellcolor{lightgray}+d_1-d_2 & \cellcolor{lightgray} & \cellcolor{lightgray}+d_1 & \cellcolor{lightgray}+d_1 \\
        \cline{2-10} 
        U & {\bf y+d_1} & \cellcolor{lightgray}0 & -3y & -x & x-2y & -y & -x-y+1 & x+y & -x-y \\
        &  & \cellcolor{lightgray} & -d_1+d_2 &  & -d_1+d_2 &  & -d_1 &  & +d_2 \\
        \cline{2-10} 
        V & {\bf y} & \cellcolor{lightgray}x & -y & -x+y & x-3y & d_1 & -x-2y+1 & x-y & -x-2y \\
        & & \cellcolor{lightgray} &  & +d_1 & -d_1+d_2 &  & -d_1 & +d_2 & +d_2 \\
        \cline{2-10} 
        W & {\bf y-d_2} & \cellcolor{lightgray}-x+1 & -2y & -x-y & x-y & y+d_1 & -x-3y+1 & x+d_2 & -x \\
        && \cellcolor{lightgray} & +d_2 & +d_2 & +d_2 &  & -d_1+d_2 &  & +d_1+d_2 \\
        \cline{2-10} 
    \end{array}$
    }
    \caption{Case $a\equiv 5\pmod{6}$; let $a=3x-1$, $b=3y+d_1-d_2$, $g=x$.}
    \end{subfigure}
    
    \vspace{5mm}

    \begin{subfigure} {0.98\linewidth}
    \centering
    \scalebox{0.8}{
    $\begin{array}{c!{\vline width 1.5pt}c!{\vline width 1.5pt}c!{\vline width 1.5pt}c|c!{\vline width 1.5pt}c|c!{\vline width 1.5pt}c|c|c!{\vline width 1.5pt}}
         \multicolumn{2}{c}{} & \multicolumn{1}{c}{[a-b]} & \multicolumn{2}{c}{U} & \multicolumn{2}{c}{V} & \multicolumn{3}{c}{W} \\ 
        \cline{3-10}
        \multicolumn{2}{c!{\vline width 1.5pt}}{} & {\bf 3x-3y+1} & {\bf -x+2y} & {\bf x-y} & {\bf -x+2y} & {\bf x-y} & {\bf -x+2y-1} & {\bf x-y+1} & {\bf d_1} \\
        \multicolumn{2}{c!{\vline width 1.5pt}}{} & {\bf -d_1+d_2} & {\bf -d_2} & {\bf +d_1+d_2} & {\bf -d_2} & {\bf +d_2} & {\bf +d_1} & {\bf -2d_1-d_2} & {} \\
        \cline{2-10}
        [a-b] & {\bf 3x-3y+1} & \cellcolor{lightgray}\text{unfilled} & \cellcolor{lightgray}0 & \cellcolor{lightgray}-x+2y &	\cellcolor{lightgray}-x & \cellcolor{lightgray}x+2y+1 & \cellcolor{lightgray}x+1 & \cellcolor{lightgray}2y+d1 & \cellcolor{lightgray}x+y+1 \\
        & {\bf -d_1+d_2} & \cellcolor{lightgray} & \cellcolor{lightgray} & \cellcolor{lightgray}-d_2 & \cellcolor{lightgray} & \cellcolor{lightgray}-d_2 & \cellcolor{lightgray} & \cellcolor{lightgray} & \cellcolor{lightgray}-d_1-d_2 \\    
        \cline{2-10}     
        U & {\bf y+d_1} & \cellcolor{lightgray}0 & -3y & -x& -x-y& x+y+1 & x-2y+1 & -y+d_2 & x-2y+1 \\
         &  & \cellcolor{lightgray} & -d_1+d_2 & -d_1 & -d_1 & -d_1-d_2 & -d_1 &  & -2d_1 \\    
        \cline{2-10} 
        V & {\bf y} & \cellcolor{lightgray}-x & -2y+d_2 & -x-y & -x-3y & x+1 & x-y+1 & y+d_1 & x-y+1 \\
         &  & \cellcolor{lightgray} &  & -d_1 & -d_1+d_2 & -d_1-d_2 &  &  & -d_1 \\
        \cline{2-10} 
        W & {\bf y-d_2} & \cellcolor{lightgray}x+1 & -y+d_2 & -x+y & -x-2y & x-y+1 & x-3y+1 & d_1+d_2 & x+1 \\
         &  & \cellcolor{lightgray} &  &  & -d_1+d_2 & -d_1 & -d_1+d_2 &  & -d_1 \\
        \cline{2-10} 
    \end{array}$
    }
    \caption{Case $a\equiv 1\pmod{6}$; let $a=3x+1$, $b=3y+d_1-d_2$, $g=2x+1$.}
    \end{subfigure}
    \caption{Construction of $L$, in two cases depending on $a$. Column and row ordering is shown in margins at the top and left, and dimensions are shown in the top row and left column in bold. All other entries are index values, which together with the dimensions specify the subarray of $K$ used to fill each subarray of $L$.}
    \label{fig: L construction 2 cases}
    \end{figure}

    Therefore, $L$ is filled as required, and so we have the needed components for \Cref{lemma: odd a construction}.
\end{proof}

\section{Concluding remarks}

\Cref{thm: main result} almost completes the problem of existence for 3-realizations with subcubes of at most two orders. We believe that realizations do exist for $a\equiv 3\pmod6$ with $\frac{a}{2}<(k-2)b<\frac{2}{3}a$, however they were unable to be easily formed with either construction. The second construction utilised the existence of latin cubes which formed orthogonal arrays $\OA(3,5,a)$ and used them to form extended latin cubes. The latin cubes chosen in that construction do not form an orthogonal array when $a\equiv 3\pmod6$, but since an $\OA(3,5,a)$ does exist for all $a\equiv 3\pmod 6$ with $a>3$, it is possible that completions can be found to fit that construction method.

\section*{Acknowledgement}
Funding: This work was supported by The Australian Research Council, through the Centre of Excellence for Plant Success in Nature and Agriculture (CE200100015). The first author would like to acknowledge the support of the Australian Government through a Research Training Program (RTP) Scholarship.

\printbibliography

\appendix
\setcounter{figure}{0}

\section{Paired realizations}

\begin{append}
\label{app: paired 4-4-b}
The following arrays give a $\LC(4^22^1)$, $\LC(4^23^1)$ and $\LC(4^3)$ such that the first two are paired and the last two are paired. The shared transversal and corresponding forced cells are highlighted.

\vspace{0.25cm}

\begingroup
    \centering
\scalebox{0.75}{$\arraycolsep=1pt
$}

\endgroup

\vspace{0.25cm}

Let $L(i,j)$ for all $i,j\in[25]$ be the entries of the first array, let $M(i,j)$ be the entries of the second array, and let $C$ be a $25\times25\times25$ array.

For $n\in[25]$ and $m\in\mathbb{Z}$, let $$f(n,m) = \begin{cases}
    n-m\pmod9, & \text{if $n\in[9]$,}\\
    9+ (n-m\pmod9), & \text{if $n\in 9+[9]$,}\\
    n, & \text{otherwise.}
\end{cases}$$
For all $i,j\in[25]$ and $k\in[9]$, let
$$C(i,j,k) = f\Big(L\big(f(i,k-1),f(j,k-1)\big),1-k\Big).$$

For $n\in[25]$, let $$g(n) = \begin{cases}
    n + 9 \pmod{18}, & \text{if $n\in[18]$,}\\
    n, & \text{otherwise,}
\end{cases}$$
and for all $i,j\in[25]$ and $k\in9+[9]$, let
$$C(i,j,k) = g\Big(C\big(g(i),g(j),k-9\big)\Big).$$

For $n\in[25]$ and $m\in\mathbb{Z}$, let $$h(n,m) = \begin{cases}
    18+ (n-m\pmod7), & \text{if $n\in 18+[7]$,}\\
    n, & \text{otherwise.}
\end{cases}$$
Then, for all $i,j\in[25]$ and $k\in[7]$, let
$$C(i,j,18+k) = h\Big(f\Big(M\big(h(i,k-1),j\big),1-k\Big),1-k\Big).$$

\end{append}

\begin{append}
\label{app: 9-9-8}
The following arrays can be used to construct a $\LC(9^28^1)$.

\vspace{0.25cm}

\begingroup
\centering
\scalebox{0.6}{$\arraycolsep=1pt\begin{array}{|cccccccccccccccccccccccccc|} \hline
1 & 9 & 8 & 7 & 6 & 5 & 4 & 3 & 2 & \multicolumn{1}{|c}{24} & 21 & 15 & 25 & 14 & 19 & 26 & 20 & 12 & 16 & 10 & 11 & 17 & 13 & 18 & 23 & 22 \\
2 & 1 & 9 & 8 & 7 & 6 & 5 & 4 & 3 & \multicolumn{1}{|c}{16} & 18 & 19 & 13 & 10 & 11 & 24 & 22 & 23 & 14 & 21 & 17 & 25 & 12 & 20 & 26 & 15 \\
3 & 2 & 1 & 9 & 8 & 7 & 6 & 5 & 4 & \multicolumn{1}{|c}{19} & 13 & 22 & 15 & 16 & 23 & 10 & 21 & 20 & 17 & 25 & 24 & 26 & 18 & 14 & 12 & 11 \\
4 & 3 & 2 & 1 & 9 & 8 & 7 & 6 & 5 & \multicolumn{1}{|c}{25} & 22 & 16 & 20 & 23 & 13 & 21 & 10 & 18 & 12 & 15 & 26 & 14 & 11 & 17 & 19 & 24 \\
5 & 4 & 3 & 2 & 1 & 9 & 8 & 7 & 6 & \multicolumn{1}{|c}{23} & 19 & 21 & 18 & 25 & 24 & 17 & 26 & 16 & 15 & 13 & 14 & 11 & 22 & 12 & 20 & 10 \\
6 & 5 & 4 & 3 & 2 & 1 & 9 & 8 & 7 & \multicolumn{1}{|c}{18} & 25 & 23 & 24 & 13 & 26 & 20 & 12 & 19 & 10 & 11 & 22 & 15 & 17 & 21 & 14 & 16 \\
7 & 6 & 5 & 4 & 3 & 2 & 1 & 9 & 8 & \multicolumn{1}{|c}{22} & 12 & 20 & 21 & 26 & 16 & 23 & 25 & 15 & 18 & 17 & 10 & 19 & 24 & 13 & 11 & 14 \\
8 & 7 & 6 & 5 & 4 & 3 & 2 & 1 & 9 & \multicolumn{1}{|c}{15} & 20 & 18 & 26 & 24 & 25 & 16 & 19 & 22 & 21 & 14 & 12 & 10 & 23 & 11 & 13 & 17 \\
9 & 8 & 7 & 6 & 5 & 4 & 3 & 2 & 1 & \multicolumn{1}{|c}{26} & 10 & 24 & 17 & 22 & 18 & 11 & 15 & 21 & 19 & 16 & 20 & 13 & 14 & 23 & 25 & 12 \\ \cline{1-9}
14 & 15 & 13 & 17 & 12 & 18 & 16 & 11 & 23 & 8 & 1 & 6 & 19 & 9 & 4 & 2 & 5 & 26 & 20 & 22 & 3 & 21 & 25 & 24 & 10 & 7 \\
25 & 17 & 14 & 13 & 11 & 16 & 18 & 12 & 15 & 21 & 6 & 1 & 9 & 20 & 3 & 7 & 8 & 24 & 22 & 26 & 23 & 5 & 10 & 2 & 4 & 19 \\
12 & 26 & 15 & 14 & 18 & 17 & 11 & 16 & 10 & 20 & 24 & 4 & 5 & 7 & 21 & 3 & 9 & 1 & 13 & 6 & 2 & 8 & 19 & 25 & 22 & 23 \\
16 & 11 & 20 & 15 & 17 & 12 & 10 & 14 & 13 & 9 & 26 & 5 & 7 & 2 & 6 & 22 & 1 & 3 & 24 & 8 & 18 & 23 & 4 & 19 & 21 & 25 \\
10 & 13 & 16 & 21 & 14 & 15 & 12 & 17 & 11 & 7 & 9 & 25 & 22 & 5 & 2 & 4 & 24 & 8 & 3 & 23 & 19 & 18 & 20 & 1 & 6 & 26 \\
15 & 12 & 17 & 11 & 24 & 14 & 13 & 10 & 16 & 2 & 4 & 9 & 23 & 19 & 5 & 1 & 7 & 25 & 6 & 18 & 21 & 20 & 26 & 22 & 3 & 8 \\
11 & 14 & 10 & 16 & 15 & 19 & 17 & 13 & 12 & 4 & 5 & 8 & 6 & 21 & 20 & 9 & 3 & 7 & 23 & 24 & 25 & 22 & 2 & 26 & 1 & 18 \\
18 & 16 & 12 & 10 & 13 & 11 & 14 & 15 & 17 & 5 & 23 & 3 & 8 & 1 & 22 & 25 & 4 & 2 & 26 & 19 & 6 & 7 & 21 & 9 & 24 & 20 \\
17 & 18 & 11 & 12 & 10 & 13 & 15 & 22 & 14 & 1 & 3 & 26 & 2 & 4 & 9 & 19 & 23 & 6 & 25 & 20 & 5 & 24 & 8 & 16 & 7 & 21 \\
20 & 19 & 26 & 22 & 25 & 24 & 23 & 21 & 18 & 3 & 7 & 17 & 12 & 15 & 10 & 14 & 11 & 4 & 5 & 1 & 16 & 9 & 6 & 8 & 2 & 13 \\
13 & 23 & 25 & 19 & 20 & 21 & 22 & 26 & 24 & 10 & 8 & 11 & 4 & 18 & 17 & 12 & 14 & 5 & 7 & 2 & 15 & 6 & 1 & 3 & 16 & 9 \\
21 & 24 & 18 & 23 & 26 & 22 & 19 & 25 & 20 & 13 & 16 & 10 & 14 & 11 & 1 & 8 & 6 & 17 & 9 & 12 & 4 & 2 & 7 & 5 & 15 & 3 \\
26 & 25 & 19 & 18 & 22 & 23 & 24 & 20 & 21 & 17 & 11 & 14 & 3 & 6 & 12 & 5 & 16 & 13 & 2 & 9 & 7 & 1 & 15 & 10 & 8 & 4 \\
22 & 10 & 23 & 24 & 21 & 20 & 26 & 19 & 25 & 6 & 2 & 12 & 11 & 17 & 8 & 15 & 18 & 14 & 1 & 3 & 13 & 4 & 16 & 7 & 9 & 5 \\
23 & 22 & 24 & 20 & 19 & 25 & 21 & 18 & 26 & 12 & 15 & 2 & 1 & 8 & 14 & 13 & 17 & 10 & 11 & 7 & 9 & 16 & 3 & 4 & 5 & 6 \\
24 & 21 & 22 & 25 & 16 & 26 & 20 & 23 & 19 & 11 & 14 & 7 & 10 & 3 & 15 & 18 & 13 & 9 & 8 & 4 & 1 & 12 & 5 & 6 & 17 & 2 \\
19 & 20 & 21 & 26 & 23 & 10 & 25 & 24 & 22 & 14 & 17 & 13 & 16 & 12 & 7 & 6 & 2 & 11 & 4 & 5 & 8 & 3 & 9 & 15 & 18 & 1 \\
\hline \end{array}$}\quad
\scalebox{0.6}{$\arraycolsep=1pt\begin{array}{|cccccccccccccccccccccccccc|} \hline
19 & 20 & 22 & 17 & 26 & 21 & 23 & 15 & 12 & 18 & 14 & 25 & 10 & 24 & 16 & 11 & 13 & 8 & 3 & 5 & 7 & 6 & 1 & 9 & 2 & 4 \\
13 & 22 & 26 & 21 & 18 & 24 & 19 & 25 & 16 & 9 & 10 & 15 & 20 & 11 & 23 & 17 & 12 & 14 & 4 & 6 & 8 & 7 & 2 & 1 & 3 & 5 \\
17 & 14 & 24 & 23 & 19 & 10 & 22 & 26 & 21 & 15 & 1 & 11 & 16 & 20 & 12 & 25 & 18 & 13 & 5 & 7 & 9 & 8 & 3 & 2 & 4 & 6 \\
26 & 18 & 15 & 19 & 21 & 22 & 11 & 23 & 25 & 14 & 16 & 2 & 12 & 17 & 20 & 13 & 24 & 10 & 6 & 8 & 1 & 9 & 4 & 3 & 5 & 7 \\
25 & 26 & 10 & 16 & 22 & 19 & 21 & 12 & 23 & 11 & 15 & 17 & 3 & 13 & 18 & 20 & 14 & 24 & 7 & 9 & 2 & 1 & 5 & 4 & 6 & 8 \\
23 & 24 & 20 & 11 & 17 & 25 & 26 & 21 & 13 & 22 & 12 & 16 & 18 & 4 & 14 & 10 & 19 & 15 & 8 & 1 & 3 & 2 & 6 & 5 & 7 & 9 \\
14 & 25 & 23 & 26 & 12 & 18 & 24 & 20 & 22 & 16 & 21 & 13 & 17 & 10 & 5 & 15 & 11 & 19 & 9 & 2 & 4 & 3 & 7 & 6 & 8 & 1 \\
24 & 15 & 19 & 25 & 23 & 13 & 10 & 22 & 26 & 20 & 17 & 21 & 14 & 18 & 11 & 6 & 16 & 12 & 1 & 3 & 5 & 4 & 8 & 7 & 9 & 2 \\
21 & 23 & 16 & 24 & 25 & 26 & 14 & 11 & 20 & 13 & 19 & 18 & 22 & 15 & 10 & 12 & 7 & 17 & 2 & 4 & 6 & 5 & 9 & 8 & 1 & 3 \\
9 & 5 & 25 & 1 & 24 & 7 & 2 & 4 & 17 & 19 & 20 & 22 & 8 & 26 & 21 & 23 & 6 & 3 & 12 & 14 & 16 & 15 & 10 & 18 & 11 & 13 \\
18 & 1 & 6 & 20 & 2 & 23 & 8 & 3 & 5 & 4 & 22 & 26 & 21 & 9 & 24 & 19 & 25 & 7 & 13 & 15 & 17 & 16 & 11 & 10 & 12 & 14 \\
6 & 10 & 2 & 7 & 20 & 3 & 25 & 9 & 4 & 8 & 5 & 24 & 23 & 19 & 1 & 22 & 26 & 21 & 14 & 16 & 18 & 17 & 12 & 11 & 13 & 15 \\
5 & 7 & 11 & 3 & 8 & 20 & 4 & 24 & 1 & 26 & 9 & 6 & 19 & 21 & 22 & 2 & 23 & 25 & 15 & 17 & 10 & 18 & 13 & 12 & 14 & 16 \\
2 & 6 & 8 & 12 & 4 & 9 & 20 & 5 & 24 & 25 & 26 & 1 & 7 & 22 & 19 & 21 & 3 & 23 & 16 & 18 & 11 & 10 & 14 & 13 & 15 & 17 \\
22 & 3 & 7 & 9 & 13 & 5 & 1 & 19 & 6 & 23 & 24 & 20 & 2 & 8 & 25 & 26 & 21 & 4 & 17 & 10 & 12 & 11 & 15 & 14 & 16 & 18 \\
7 & 21 & 4 & 8 & 1 & 14 & 6 & 2 & 19 & 5 & 25 & 23 & 26 & 3 & 9 & 24 & 20 & 22 & 18 & 11 & 13 & 12 & 16 & 15 & 17 & 10 \\
20 & 8 & 21 & 5 & 9 & 2 & 15 & 7 & 3 & 24 & 6 & 19 & 25 & 23 & 4 & 1 & 22 & 26 & 10 & 12 & 14 & 13 & 17 & 16 & 18 & 11 \\
4 & 19 & 9 & 22 & 6 & 1 & 3 & 16 & 8 & 21 & 23 & 7 & 24 & 25 & 26 & 5 & 2 & 20 & 11 & 13 & 15 & 14 & 18 & 17 & 10 & 12 \\ \cline{19-26}
3 & 4 & 5 & 6 & 7 & 8 & 9 & 1 & 2 & 12 & 13 & 14 & 15 & 16 & 17 & 18 & 10 & 11 & \multicolumn{1}{|c}{19} & 26 & 25 & 24 & 23 & 22 & 21 & 20 \\
11 & 12 & 13 & 14 & 15 & 16 & 17 & 18 & 10 & 2 & 3 & 4 & 5 & 6 & 7 & 8 & 9 & 1 & \multicolumn{1}{|c}{20} & 19 & 26 & 25 & 24 & 23 & 22 & 21 \\
1 & 2 & 3 & 4 & 5 & 6 & 7 & 8 & 9 & 10 & 11 & 12 & 13 & 14 & 15 & 16 & 17 & 18 & \multicolumn{1}{|c}{21} & 20 & 19 & 26 & 25 & 24 & 23 & 22 \\
12 & 13 & 14 & 15 & 16 & 17 & 18 & 10 & 11 & 3 & 4 & 5 & 6 & 7 & 8 & 9 & 1 & 2 & \multicolumn{1}{|c}{22} & 21 & 20 & 19 & 26 & 25 & 24 & 23 \\
16 & 17 & 18 & 10 & 11 & 12 & 13 & 14 & 15 & 7 & 8 & 9 & 1 & 2 & 3 & 4 & 5 & 6 & \multicolumn{1}{|c}{23} & 22 & 21 & 20 & 19 & 26 & 25 & 24 \\
10 & 11 & 12 & 13 & 14 & 15 & 16 & 17 & 18 & 1 & 2 & 3 & 4 & 5 & 6 & 7 & 8 & 9 & \multicolumn{1}{|c}{24} & 23 & 22 & 21 & 20 & 19 & 26 & 25 \\
8 & 9 & 1 & 2 & 3 & 4 & 5 & 6 & 7 & 17 & 18 & 10 & 11 & 12 & 13 & 14 & 15 & 16 & \multicolumn{1}{|c}{25} & 24 & 23 & 22 & 21 & 20 & 19 & 26 \\
15 & 16 & 17 & 18 & 10 & 11 & 12 & 13 & 14 & 6 & 7 & 8 & 9 & 1 & 2 & 3 & 4 & 5 & \multicolumn{1}{|c}{26} & 25 & 24 & 23 & 22 & 21 & 20 & 19 \\
\hline \end{array}$}

\endgroup

\vspace{0.25cm}

Use the same functions, $f$, $g$ and $h$, and the same method as with the $\LC(9^27^1)$, replacing $[25]$ and $[7]$ with $[26]$ and $[8]$ respectively.

\end{append}

\begin{append}
\label{app: 12-12-7}
As with the $\LC(9^27^1)$ and $\LC(9^28^1)$, we construct a $\LC(12^27^1)$, $C$, from a subset of the layers. Of the following thirteen arrays, let the first 6 be the first layers $(\cdot,\cdot,[6])$ of $C$, and let the other 7 arrays be the last layers $(\cdot,\cdot,24+[7])$ of $C$.

For $n\in[31]$, let $$f(n) = \begin{cases}
    n+6, & \text{if $n\in[6]$ or $n\in12+[6]$,}\\
    n-6, & \text{if $n\in 6+[6]$ or $n\in18+[6]$,}\\
    n, & \text{otherwise.}
\end{cases}$$
For all $i,j\in[31]$ and $k\in6+[6]$, let
$$C(i,j,k) = f\Big(C\big(f(i),f(j),k-6\big)\Big).$$

For $n\in[31]$, let $$g(n) = \begin{cases}
    n + 12, & \text{if $n\in[12]$,}\\
    n - 12, & \text{if $n\in12+[12]$,}\\
    n, & \text{otherwise,}
\end{cases}$$
and let $$h(n) = \begin{cases}
    n + 18, & \text{if $n\in[6]$,}\\
    n + 6, & \text{if $n\in6+[6]$,}\\
    n - 6, & \text{if $n\in12+[6]$,}\\
    n - 18, & \text{if $n\in18+[6]$,}\\
    n, & \text{otherwise.}
\end{cases}$$
Then for all $i,j\in[31]$ and $k\in12+[12]$, let
$$C(i,j,k) = h\Big(C\big(h(i),g(j),k-12\big)\Big).$$

\begingroup
\centering
\scalebox{0.6}{$\arraycolsep=1pt
$}\quad

\endgroup
\end{append}

\end{document}